\newtheorem{thm}{Theorem}[section]
\newtheorem{lem}[thm]{Lemma}
\newtheorem{prop}[thm]{Proposition}
\DeclareMathAlphabet{\mathpzc}{OT1}{pzc}{m}{it}
\numberwithin{equation}{section}
\newcommand{\R}{\mathbb{R}}
\newcommand{\N}{\mathbb{N}}
\newcommand{\Z}{\mathbb{Z}}
\newcommand{\C}{\mathbb{C}}
\newcommand{\sph}{\mathbb{S}^1}
\newcommand{\B}{\mathbb{B}}
\newcommand{\Om}{\Omega}
\newcommand{\Omi}{\Omega^{\text{i}}}
\newcommand{\Omo}{\Omega^{\text{o}}}
\newcommand{\rd}{\mathrm{d}}
\newcommand{\bqn}{\begin{equation}}
\newcommand{\eqn}{\end{equation}}
\newcommand{\bqnn}{\begin{equation*}}
\newcommand{\eqnn}{\end{equation*}}
\newcommand{\bear}{\begin{eqnarray}} 
\newcommand{\eear}{\end{eqnarray}} 
\newcommand{\bean}{\begin{eqnarray*}} 
\newcommand{\eean}{\end{eqnarray*}} 
\newcommand{\bs}{\begin{split}}
\newcommand{\es}{\end{split}}
\newcommand{\ii}{\text{i}}
\newcommand{\oo}{\text{o}}
\newcommand{\dhr}{\mathrel{\lhook\joinrel\relbar\kern-.8ex\joinrel\lhook\joinrel\rightarrow}}
\begin{document}

\title[Two-Phase Flow with Coriolis Effects]{Two-Phase Flow in Rotating Hele-Shaw Cells with Coriolis Effects}

\author{Joachim Escher}
\address{Leibniz Universit\"at Hannover\\ Institut f\" ur Angewandte Mathematik \\ Welfengarten 1 \\ D--30167 Hannover\\ Germany}
\email{escher@ifam.uni-hannover.de}

\author{Patrick Guidotti}
\address{University of California, Irvine\\
Department of Mathematics\\
340 Rowland Hall\\
Irvine, CA 92697-3875\\ USA }
\email{gpatrick@math.uci.edu}

\author{Christoph Walker}
\address{Leibniz Universit\"at Hannover\\ Institut f\" ur Angewandte Mathematik \\ Welfengarten 1 \\ D--30167 Hannover\\ Germany}
\email{walker@ifam.uni-hannover.de}

\begin{abstract}
The free boundary problem of a two phase flow in a rotating Hele-Shaw cell with Coriolis effects is studied. 
Existence and uniqueness of solutions near spheres is established, and the asymptotic stability and instability 
of the trivial solution is characterized in dependence on the fluid densities.
\end{abstract}

\keywords{Two phase flow, free boundary problem, well-posedness, stability of equilibria.}
\subjclass[2010]{}

\maketitle


\section{Introduction}

The motion of one or two fluids confined to a narrow gap between two parallel plates is 
an interesting problem with a long history. It is the classical set up of the so-called 
Hele-Shaw problem \cite{HS1898}. 
It is well-known that instability driven pattern formation such as fingering can occur 
under appropriate assumptions on the viscosity of the fluids \cite{ST58} and on the 
surface tension at their interface. There is a vast empirical and theoretical literature 
concerning this classical problem. From the purely mathematical point of view the problem 
has been extensively studied in its original formulation as a one \cite{DR84,CP93,ES97b,FT03} 
and as a two-phase \cite{YT11} problem and, more recently, also {for one fluid} in the case of rotating 
plates \cite{EEM11}. The focus is here on the two-phase problem with rotating plates and 
including the effects of Coriolis forces. This setting has recently been considered in 
the physical literature by a variety of authors \cite{WC05,GBM07,AlvarezGadelhaMiranda}. It first appeared in 
\cite{Schw89} where the effects of rotation were introduced in a ad-hoc fashion into 
Darcy's law for the one-phase problem
$$
{\nabla p=-\frac{12\eta}{b^2}\vec{v}+\varrho\omega ^2 \vec{x}+2\varrho\omega\vec{z}\times \vec{v}\, , }
$$
where $\vec{z}$ is the axis of rotation, $b$ is the plate spacing, $\vec{x}$ is the two-dimensional position vector, $\omega$ is the (constant) angular velocity of the plates, whereas $\vec{v}$, $p$, 
$\eta$, and ${\varrho}$ are the fluid's velocity, pressure, viscosity, and density, 
respectively. The last term accounts for Coriolis' force. {While most of the literature hitherto neglected this force, recent studies performed a model derivation 
for a one-phase Hele-Shaw type model with Coriolis force by means of a standard gap 
averaging technique starting from Navier-Stokes' equations, cf. \cite{SchwR04,WC05}. The authors of \cite{WC05} observe  that the effects 
due to the Coriolis term in their equations can be larger than the inertial terms typically 
neglected in small Reynolds number type reductions. 
They also point out the fact that simpler ad-hoc models used earlier, while qualitatively similar, 
{do} eventually lead to a different and inaccurate prediction of the growth rate of 
the unstable modes involved in the fingering phenomenon.}

More recently, the {effects of a Coriolis term} 
on the fingering patterns in the rotating two-phase Hele-Shaw problem have been studied in 
\cite{GBM07,AlvarezGadelhaMiranda} where the formal linear stability analyses of \cite{Schw89,WC05} for the 
one-phase problem are extended to the weakly nonlinear case by the use of formal expansions 
and to the fully nonlinear regime by means of numerical simulations. Previous numerical and 
experimental results included \cite{SchwR04,CCWT05}, whereas the practical relevance of the 
problem is attested by a number of publications cited in \cite{AlvarezGadelhaMiranda} for that very purpose.

In this paper local existence of a unique classical solution for nearly circular initial 
interfaces is {established for the} general 
rotating two-phase Hele-Shaw problem with Coriolis effects in the formulation proposed in 
\cite{GBM07,AlvarezGadelhaMiranda}, which is, in turn, based on a derivation similar to that of \cite{WC05} 
for the single phase case. It leads to the following generalized 
Darcy's law
$$
 \nabla P_j=-\alpha _j\vec{v}_j+\beta _j\vec{z}\times \vec{v}_j\, ,
$$
for the velocities of the inner and outer fluids ($j=\ii,\oo$), for constants $\alpha_j$ 
and $\beta _j$ {defined below} and for the pressure $P_j$ related to the hydrostatic 
pressure $p_j$ via
$$
 P_j=p_j-\frac{\varrho _j\omega ^2}{2}|x|^2\, .
$$
The use of the {\em unusual} pressure $P_j$ yields some useful simplifications as will 
soon become apparent.
Rigorous stability results are also obtained on near circularity assumptions {for the} initial 
data by resorting to a general principle of linearized stability. {While the circular steady state 
is exponentially asymptotically stable\footnote{{Our analytical approach reveals a precise description 
of the exponential decay in terms of the physical parameters, in particular of the Coriolis terms, 
see Section~\ref{stable}.}} when the outer fluid is denser, it becomes unstable {if this relation is reversed.}} 
The main techniques used are {a transformation} of the {original} free boundary problem to a fixed domain one, 
{a} decoupling of the system and {a} reduction to a single nonlinear and nonlocal evolution 
for the interface separating the two fluids, and parabolic optimal regularity results in little H\"older spaces.

Related previous mathematical results include the existence, stability, and bifurcation 
analysis of \cite{EEM11} in the rotating one-phase problem and the global existence and 
stability of smooth solutions obtained by \cite{YT11} for nearly circular initial interfaces 
in the non-rotating two-phase case.

\section{Governing Equations and Main Results}

We first give a short {justification for} the mathematical formulation of the physical problem of a rotating Hele-Shaw cell 
including Coriolis effects{. F}urther details regarding the modeling can be found in \cite{AlvarezGadelhaMiranda,GBM07}. 
We {then} end this section with {the} main results on existence of solutions to the corresponding governing equations 
and stability properties of the trivial solution. 

\subsection{Governing Equations}

Consider a circular Hele-Shaw cell of radius $R\ge 2$ and very small gap width $b>0$, rotating clockwise around the $z$-axis with  
a constant angular velocity \mbox{$\omega>0$}. The cell is assumed to contain two immiscible, incompressible, viscous fluids with 
densities \mbox{$\varrho_j>0$} and viscosities $\eta_j>0$, where $j=$i labels the inner and $j=$o the outer fluid{, 
respectively}. The surface tension between the two fluids is {given by }$\sigma>0$. The rotating coordinate system is 
defined in such a way that its origin is located at the cell center and that rotation is perpendicular to the vector 
$\vec{z}=(0,0,1)$. Let $\Om^j(t)$ denote the region {of space} occupied by fluid $j\in\{$i,o$\}$ at time $t$, and 
let $\Gamma(t)$ {denote} the sharp interface separating the two fluids. The unit normal vector $\nu_{\Gamma(t)}$ 
{to} $\Gamma(t)$ {is assumed to point} from $\Omi(t)$ to $\Omo(t)$.

The basic hydrodynamic equation of the system is a generalized {\em Darcy's law} relating the pressure fields 
$P_j=P_j(t)$ {to} the two-dimensional gap-averaged flow velocities $\vec{v}_j=\vec{v}_j(t)$ {through}
\bqn\label{1}
\nabla P_j=-\alpha_j\vec{v}_j+\beta_j(\vec{z}\times\vec{v_j})\quad\text{in}\quad \Om^j(t)\ .
\eqn
The numbers $$\alpha_j:=\frac{12\eta_j}{b^2} E_j\, ,\qquad \beta_j:=\frac{12\eta_j}{b^2} F_j$$
depend linearly on the Coriolis force terms $E_j> 0$ and $F_j\ge 0$, respectively{. The latter, in turn,} 
depend on the rotational Reynolds number $Re_j=\varrho_j \omega b^2/12\eta_j$ (see \cite{AlvarezGadelhaMiranda}). 
If Coriolis effects are neglected, then $E_j=1$ and $F_j=0$, so \eqref{1} reduces to the usual Darcy's law. Throughout 
this paper we shall assume that $\alpha_j>0$ and $\beta_j\ge 0$. 
Incompressibility of the fluids is expressed by
\bqn\label{2}
\mathrm{div}\vec{v}_j=0\quad\text{in}\quad \Om^j(t)\, ,
\eqn
while the interface dynamics is governed by the normal stress balance
\bqn\label{3}
P_\ii-P_\oo=\sigma \kappa_{\Gamma(t)}+(\gamma_\oo-\gamma_\ii)\vert x\vert^2 \quad\text{on}\quad \Gamma(t)\, ,
\eqn
and the kinematic boundary condition
\bqn\label{4}
V=\vec{v}_\ii\cdot\nu_\Gamma =\vec{v}_\oo\cdot\nu_\Gamma \quad\text{on}\quad \Gamma(t)\ .
\eqn
Here, $\gamma_j:=\varrho_j\omega^2/2$ {whereas} $\kappa_{\Gamma(t)}$ denotes the curvature of $\Gamma(t)$ 
{which is }taken to be positive if $\Omi(t)$ is convex. {The normal velocity of the interface is given by $V=V(t)$.}
On the outer boundary of the cell {a} no-slip condition
\bqn\label{5}
\vec{v}_\oo\cdot x =0 \quad\text{on}\quad [\vert x\vert=R]
\eqn
is imposed.\\

Observe that the {inner and outer fluids'} masses are conserved since, by Reynold's transport theorem, {one has that}
$$
\frac{\rd}{\rd t}\int_{\Omo(t)}\,\rd x=-\int_{\Gamma(t)}\vec{v}_\oo\cdot\nu_\Gamma\, \rd \sigma +\int_{[\vert x\vert=R]} \vec{v}_\oo\cdot \frac{x}{R}\,\rd \sigma
$$
and
$$
\frac{\rd}{\rd t}\int_{\Omi(t)}\,\rd x=\int_{\Gamma(t)}\vec{v}_\ii\cdot\nu_\Gamma\, \rd \sigma \, ,
$$
which are both zero in view of Gauss' Theorem and \eqref{2}. Since
$\mathrm{rot}(\vec{z}\times\vec{v}_j)=(0,0,\mathrm{div} \vec{v_j})$ we also note that the flows are irrotational in the 
bulk, i.e. $\mathrm{rot}\vec{v}_j=0$ in $\Om^j(t)$, due to \eqref{1} and \eqref{2}. 

\subsection{The System in Terms of $P_j$}
\hspace{0.1pt}
{The} governing equations in terms of $P_j$ {are obtained by taking }the divergence on both 
sides of \eqref{1} {which yields}
\bqnn
\Delta P_j=0 \quad\text{in}\quad \Om^j(t)\, ,
\eqnn
{making use of } \eqref{2} and {observing }that $\mathrm{rot}\vec{v}_j=0$ implies 
$\mathrm{div}(\vec{z}\times\vec{v}_j)=0$.
As for the kinematic boundary condition~\eqref{4}, we solve \eqref{1} for $\vec{v}_j$ {to get}
$$
\vec{v}_j=\frac{1}{\vert \Theta_j\vert^2}\big(-\alpha_j\nabla P_j-\beta_j(\vec{z}\times\nabla P_j)\big)
$$
with complex numbers $\Theta_j:=\alpha_j +i\beta_j$. Then, defining the {tangent} vector $\tau_\Gamma :=-(\vec{z}\times\nu_\Gamma)$ 
{to}~$\Gamma$, we obtain
$$
\vec{v}_j\cdot \nu_\Gamma= \frac{1}{\vert \Theta_j\vert^2}\big(-\alpha_j\partial_{\nu_\Gamma} P_j-\beta_j\partial_{\tau_\Gamma} P_j\big)\ .
$$
Whence \eqref{4} and \eqref{5} become
\bqnn
-V=\frac{1}{\vert \Theta_{\ii}\vert^2}\big(\alpha_{\ii}\partial_{\nu_\Gamma} P_{\ii}+\beta_{\ii}\partial_{\tau_\Gamma} P_{\ii}\big)=\frac{1}{\vert \Theta_{\oo}\vert^2}\big(\alpha_{\oo}\partial_{\nu_\Gamma} P_{\oo}+\beta_{\oo}\partial_{\tau_\Gamma} P_{\oo}\big)\qquad\text{on}\quad \Gamma(t)\, ,
\eqnn
{and},
\bqnn
\alpha_{\oo}\partial_{\nu} P_{\oo}+\beta_{\oo}\partial_{\tau} P_{\oo}=0\qquad\text{on}\quad [\vert x\vert=R]\, ,
\eqnn
{respectively,} where $\nu$ denotes the unit outer normal vector and $\tau$ the corresponding tangential 
vector on the cell boundary $[\vert x\vert=R]$. Therefore, we {arrive at} the following free boundary problem for the 
pressures~$P_j$
\begin{align}
 \Delta P_j&=0& &\text{in }\Om^j(t)\, ,& j=\text{i,o}\, ,\label{11a}\\
 P_\ii-P_\oo&=\sigma \kappa_{\Gamma(t)}+(\gamma_\oo-\gamma_\ii)|x|^2& &\text{on }\Gamma(t)\, , \label{11b}\\
 -V=\frac{1}{|\Theta_{\ii}|^2}\big(\alpha_{\ii}\partial_{\nu_\Gamma}P_{\ii}+\beta_{\ii}\partial_{\tau_\Gamma} P_{\ii}\big)
 &=\frac{1}{|\Theta_{\oo}|^2}\big(\alpha_{\oo}\partial_{\nu_\Gamma} P_{\oo}+\beta_{\oo}\partial_{\tau_\Gamma} P_{\oo}\big)& &\text{on }\:\Gamma(t)\, ,\label{11c}\\
 \alpha_{\oo}\partial_{\nu} P_{\oo}+\beta_{\oo}\partial_{\tau} P_{\oo}&=0& &\text{on }[|x|=R]\, ,\label{11d}
\end{align}
for $t>0$ {, complemented with an} initial surface $\Gamma(0)=\Gamma^0$.
Since only derivatives {and the difference of the two pressures} enter the system, {uniqueness can only be expected 
up to additive constants for them}. As we shall {later} see, this is one of the obstacles {that need to be overcome} 
from a mathematical viewpoint.

\subsection{Main Theorems}

To give a precise {formulation} of our mathematical results on \eqref{11a}-\eqref{11d}, we parametrize the boundary 
$\Gamma(t)$ over the unit sphere $\sph:=\{x\in\R^2\,;\,\vert x\vert=1\}$. {To this end} we introduce, 
for $s\ge 0$, {so-called} little Hölder spaces $h^{s}(U)$ over an open subset $U$ of $\R^n$ as the closure of 
$BUC^\infty(U)$ in $BUC^s(U)$. Here $BUC^s(U)$ {consists of} all functions $f:U\rightarrow\R$ {with} 
bounded and uniformly continuous derivatives up to order $[s]$ and with uniformly $(s-[s])$-Hölder continuous derivatives of order 
$[s]$. If $M$ is a (sufficiently smooth) submanifold of $\R^n$, we define $h^{s}(M)$ by means of an atlas for $M$ {
in the canonical way}.
In the following we shall identify a function $h:\sph\rightarrow\R$ with the function $\bar{h}:[0,2\pi]\rightarrow\R$ given by $\bar{h}(\theta):=h\big(e^{i\theta}\big)$ for $0\le\theta\le 2\pi$. {The bar} will often be dropped as no confusion seems likely.

Fix $a\in (0,1/4)$ and $\delta\in (0,1)$ {and set}
$$
\mathcal{V}:=\{\rho\in h^{4+\delta}(\sph)\,;\, \|\rho\|_\infty <a\}\, ,
$$
and, for $\rho\in\mathcal{V}$,
$$
\Omega_\rho^\ii:=\left\{x\in\R^2\setminus\{0\}\,;\,\vert x\vert <1+\rho\left(\frac{x}{\vert x\vert }\right)\right\}\cup \{0\}\, ,\quad \Om_\rho^\oo:=\Omega\setminus \overline{\Om_\rho^\ii}
$$
with $\Om:=\B(0,R)$ where, we recall, $R\geq 2$. Then
$$
 \Gamma_\rho:=\left\{x\in\R^2\,;\, \vert x\vert =1+\rho\left(\frac{x}{\vert x\vert }\right)\right\}=\Big\{
 {\big[1+\rho(y)\big]} y\,;\, y\in\sph\Big\}\ 
$$
separates the domains $\Omega_\rho^\ii$ and $\Omega_\rho^\oo$. Since $\Gamma_\rho$ can be described as the zero level set of 
$$
N_\rho(x):=\vert x\vert -1-\rho\left(\frac{x}{\vert x\vert }\right)\, ,\quad 3/4 <\vert x\vert <5/4\, ,
$$
with $N_\rho<0$ in $\Om_\rho^\ii\cap [3/4<\vert x\vert < 5/4]$, the unit normal $\nu_\rho(x)$ at $x\in \Gamma_\rho$ pointing 
from  $\Omega_\rho^\ii$ to $\Omega_\rho^\oo$ is {given by}
\bqn\label{12}
\nu_\rho(x)=\frac{\nabla N_\rho(x)}{\vert\nabla N_\rho(x)\vert}\ .
\eqn
In the following, we let $\tau_\rho=-(\vec{z}\times \nu_\rho)$ denote the corresponding tangential vector.
Next, suppose that $\rho\in C\big([0,T],\mathcal{V}\big)\cap C^1\big([0,T],h^{1+\delta}(\sph)\big)$ for some $T>0$ 
and set
$$
N_\rho(t,x):=N_{\rho(t)}(x)=\vert x\vert -1-\rho\left(t,\frac{x}{\vert x\vert }\right)\, ,\quad t\in [0,T]\, ,\quad\quad 3/4 <\vert x\vert <5/4\ .
$$
Observe that the normal velocity $V_{\rho}$ of the moving boundary $\Gamma_{\rho}$ equals 
$-\partial_t N_\rho /\vert \nabla N_\rho\vert$. Consequently, if $\rho\in C\big([0,T],\mathcal{V}\big)\cap C^1\big([0,T],h^{1+\delta}(\sph)\big)$  describes the evolution of the boundary, then \eqref{11a}-\eqref{11d} read as
\begin{align}
 \Delta P_j&=0& & \text{in }\Om_{\rho(t)}^j\, , &\ j=\text{i,o}\, ,\label{15a}\\
 P_\ii-P_\oo&=\sigma \kappa_{\Gamma_{\rho(t)}}+(\gamma_\oo-\gamma_\ii){\Big[ 1+\rho\bigl(t,\frac{x}{|x|}\bigr)\Big]}^2&  
 &\text{on }\Gamma_{\rho(t)}\, , &\label{15b}\\
\partial_t\rho\bigl(t,\frac{x}{|x|}\bigr)&=\frac{-1}{\vert \Theta_{j}\vert^2}\big[\alpha_{j}\nabla P_{j}+
\beta_{j}(\vec{z}\times\nabla P_{j})\big]\cdot\nabla N_{\rho(t)}& &\text{on }\Gamma_{\rho(t)}\, , &\ j=\text{i,o}\, ,\label{15c}\\
\alpha_{\oo}\partial_{\nu} P_{\oo}+\beta_{\oo}\partial_{\tau} P_{\oo}&=0&  &\text{on }\partial\Om=R\,\sph\, ,\label{15d}
\end{align}
for $t>0$ {and }with $\Gamma_{\rho(0)}=\Gamma^0$.
We call a triple $(\rho,P_\ii , P_\oo)$ a {\it (classical Hölder) solution to \eqref{15a}-\eqref{15d}} provided 
$$
\rho\in C\big([0,T],\mathcal{V}\big)\cap C^1\big([0,T],h^{1+\delta}(\sph)\big)
$$ 
for some $T>0$ and 
$$
P_j(t,\cdot)\in h^{2+\delta}\big(\Om_{\rho(t)}^j\big)\, ,\quad t\in [0,T]\, ,\quad j=\ii, \oo\, ,
$$ 
satisfy \eqref{15a}-\eqref{15d} pointwise. The main existence result is stated in the following theorem, a proof of which is given in Section~\ref{existence}:

\begin{thm}\label{T1}
Let $\delta\in (0,1)$. There is an open zero neighborhood $\mathcal{O}$ in $h^{4+\delta}(\sph)$ such that for each initial geometry $\rho_0\in\mathcal{O}$, there exist a time $T:=T(\rho_0)>0$ and a classical solution $(\rho,P_\mathrm{i} , P_\mathrm{o})$ to \eqref{15a}-\eqref{15d} with $\rho(0)=\rho_0$. This solution is unique except for additive constants in the pressures $P_\mathrm{i}$ and $P_\mathrm{o}$.
\end{thm}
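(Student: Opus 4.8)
The plan is to carry out the three-step programme announced in the introduction: freeze the domain by a Hanzawa-type transformation, decouple the elliptic part from the geometry, and reduce \eqref{15a}-\eqref{15d} to a single quasilinear parabolic equation for $\rho$ on $\sph$, which is then treated with the theory of abstract quasilinear parabolic problems and parabolic optimal regularity in little H\"older spaces. As a first step I would fix a diffeomorphism $\Xi_\rho\in\mathrm{Diff}(\overline\Om)$, $\rho\in\mathcal{V}$, built from $x\mapsto\big(1+\chi(|x|)\,\rho(x/|x|)\big)x$ with a cut-off $\chi$ supported in $(3/4,5/4)$ and equal to $1$ near $|x|=1$; then $\Xi_\rho$ maps $\sph$ onto $\Gamma_\rho$, is the identity near $x=0$ and near $\partial\Om$, and depends real-analytically on $\rho$. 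Pulling \eqref{15a}-\eqref{15d} back by $\Xi_\rho$ turns $\Delta$ into a uniformly elliptic operator $\ml(\rho)$ whose little-H\"older coefficients depend analytically on $\rho$ up to two derivatives, turns the flux expressions on $\Gamma_\rho$ into first-order boundary operators $\mathcal{C}_j(\rho)$ on $\sph$ (each carrying the factor $|\Theta_j|^{-2}$), and leaves \eqref{15d} in the form $\alpha_\oo\partial_\nu+\beta_\oo\partial_\tau$ on $R\,\sph$. For $\rho=0$ all of these are constant-coefficient and are diagonalised explicitly by Fourier series in the angular variable.

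For frozen $\rho\in\mathcal{V}$ I would take $\phi:=P_\oo|_{\sph}$ as the single scalar unknown and solve two decoupled linear elliptic problems: the Dirichlet problem $\ml(\rho)P_\ii=0$ in $\B(0,1)$ with trace $\phi+g(\rho)$, where $g(\rho)$ is the pulled-back right-hand side of \eqref{15b}; and the mixed problem $\ml(\rho)P_\oo=0$ in the transformed outer region with trace $\phi$ on $\sph$ and the oblique condition on $R\,\sph$. The latter is a regular (Lopatinskii--Shapiro) boundary value problem because $\alpha_\oo\nu+\beta_\oo\tau$ is nowhere tangent to $R\,\sph$ (since $\alpha_\oo>0$), and it is uniquely solvable, uniqueness following from the energy identity together with $\int_{R\sph}P_\oo\,\partial_\tau P_\oo=0$. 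Solvability and the $h^{k+\delta}$-mapping properties follow from Schauder theory plus a perturbation argument off $\rho=0$, giving solution operators $\mathcal{S}_\ii(\rho),\mathcal{S}_\oo(\rho)$ analytic in $\rho$. The only relation not yet used is the flux matching \eqref{15c}, which becomes the scalar equation
\[ \mathcal{T}(\rho)\phi=-\,\mathcal{C}_\ii(\rho)\mathcal{S}_\ii(\rho)g(\rho)\quad\text{on }\sph,\qquad\mathcal{T}(\rho):=\mathcal{C}_\ii(\rho)\mathcal{S}_\ii(\rho)-\mathcal{C}_\oo(\rho)\mathcal{S}_\oo(\rho). \]
Here $\mathcal{T}(\rho)$ is a first-order elliptic pseudodifferential operator on $\sph$ whose kernel, for $\rho$ near $0$, is exactly the constants --- precisely the gauge freedom $P_j\mapsto P_j+c$ --- while the compatibility condition on the right-hand side holds automatically, since $\int_{\Gamma_\rho}\partial_{\nu_\rho}P_\ii=\int_{\Om_\rho^\ii}\Delta P_\ii=0$ and $\int_{\Gamma_\rho}\partial_{\tau_\rho}P_\ii=0$. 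Normalising $\phi$ (say by $\int_\sph\phi=0$) then yields a unique $\phi=\phi(\rho)$, analytic in $\rho$, and inserting everything back into \eqref{15c} reduces \eqref{15a}-\eqref{15d} to
\[ \partial_t\rho=F(\rho):=\mathcal{C}_\ii(\rho)\mathcal{S}_\ii(\rho)\big[\phi(\rho)+g(\rho)\big],\qquad\rho(0)=\rho_0, \]
with $F$ analytic from $\mathcal{V}$ into $h^{1+\delta}(\sph)$.

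It then remains to see that $F$ is a \emph{quasilinear} third-order operator whose linearisation generates an analytic semigroup. The only third-order dependence on $\rho$ enters through the curvature in $g(\rho)=\sigma\kappa_{\Gamma_\rho}+\dots$, in which $\rho$ appears up to two derivatives and the highest one linearly; hence $F(\rho)=-\mathcal{A}(\rho)\rho+(\text{lower order})$ with $\mathcal{A}(\rho)$ a third-order operator whose coefficients depend on $\rho$ only up to two derivatives. A symbol computation at $\rho=0$, using that the linearised curvature at the unit circle is $\rho\mapsto-(\rho+\partial_\theta^2\rho)$ together with the explicit Fourier diagonalisations, gives for the principal symbol of $\partial F(0)$ the expression $-\sigma\xi^2\,pq/(p+q)$, where $p(\xi)=\big(\alpha_\ii|\xi|+i\beta_\ii\xi\big)/|\Theta_\ii|^2$ and $q(\xi)=\big(\alpha_\oo|\xi|-i\beta_\oo\xi\big)/|\Theta_\oo|^2$ have strictly positive real part because $\alpha_\ii,\alpha_\oo>0$. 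As the parallel combination $pq/(p+q)=1/(1/p+1/q)$ of two complex numbers with positive real part again has positive real part, this symbol has strictly negative real part for $\xi\neq0$, the Coriolis parameters $\beta_j$ contributing only to lower-order and imaginary terms; so $\partial F(0)$ (which coincides with $-\mathcal{A}(0)$ up to lower order) generates a strongly continuous analytic semigroup on $h^{1+\delta}(\sph)$ with domain $h^{4+\delta}(\sph)$, and parabolicity persists for $\rho$ near $0$. Applying the general existence-uniqueness theorem for abstract quasilinear parabolic problems enjoying continuous maximal regularity (in the spirit of Da~Prato--Grisvard, Angenent and Lunardi) to $\partial_t\rho=F(\rho)$, $\rho(0)=\rho_0$, one obtains, for $\rho_0$ in a sufficiently small zero neighbourhood $\mathcal{O}\subset h^{4+\delta}(\sph)$, a unique solution $\rho\in C\big([0,T],\mathcal{O}\big)\cap C^1\big([0,T],h^{1+\delta}(\sph)\big)$ on some interval $[0,T]$. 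Undoing the transformation --- putting $\widetilde P_\ii:=\mathcal{S}_\ii(\rho)[\phi(\rho)+g(\rho)]$ and $\widetilde P_\oo:=\mathcal{S}_\oo(\rho)\phi(\rho)$ on the reference configuration and $P_j(t,\cdot):=\widetilde P_j(t,\cdot)\circ\Xi_{\rho(t)}^{-1}$ --- produces $P_j(t,\cdot)\in h^{2+\delta}(\Om_{\rho(t)}^j)$, so that $(\rho,P_\ii,P_\oo)$ is the desired classical solution; since only derivatives and the difference $P_\ii-P_\oo$ enter \eqref{15a}-\eqref{15d}, it is unique except for additive constants in the pressures.

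The main obstacle is the analysis underlying the middle two steps: establishing well-posedness and the precise $h^{k+\delta}$-mapping properties of the \emph{non-self-adjoint} elliptic transmission problem --- the Coriolis terms $\beta_j\partial_\tau$ rule out variational arguments, so one must rely on Schauder estimates, the Lopatinskii--Shapiro condition for the oblique problem and perturbation off the explicitly solvable flat case --- together with the symbol analysis that isolates the quasilinear principal part of $F$ and verifies the parabolicity sign. A secondary technical point, to be carried along throughout, is the gauge indeterminacy of the pressures: one works with $\phi$ on the mean-zero subspace of $h^{2+\delta}(\sph)$ and must check that the compatibility condition for $\mathcal{T}(\rho)$ is automatically satisfied, which is precisely the divergence-theorem identity already used above for mass conservation.
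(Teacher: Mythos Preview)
Your overall architecture---Hanzawa transform, decouple the elliptic part, reduce to a nonlinear parabolic evolution for $\rho$, then invoke maximal regularity in little H\"older spaces---matches the paper's. The genuine difference is in the \emph{decoupling scheme}. You take the common Dirichlet trace $\phi=P_\oo|_{\sph}$ as the auxiliary unknown, solve two Dirichlet(-oblique) problems, and recover $\phi(\rho)$ from the flux-matching equation $\mathcal{T}(\rho)\phi=\text{RHS}$; this gives $\partial_t\rho=F(\rho)$ directly. The paper instead solves a \emph{Neumann} problem for the inner pressure with $h=\partial_t\rho$ as boundary data (adding a mean-zero correction and an integral gauge, equations~\eqref{P1a}--\eqref{P1c}), then a Dirichlet--oblique problem for the outer pressure; this produces the \emph{implicit} relation $(1-\mathcal{R}(\rho))\partial_t\rho=\mathcal{B}_\oo(\rho)T(\rho,-\mathcal{K}(\rho))$, and an extra step (Lemma~\ref{L14}) is needed to invert $1-\mathcal{R}(\rho)$ near $\rho=0$. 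Your route avoids this inversion but trades it for the analysis of the two-phase DtN operator $\mathcal{T}(\rho)$ (one-dimensional kernel/cokernel and automatic compatibility), which you correctly identify. Both decouplings are standard for two-phase problems and lead to the same linearization $\partial F(0)$.

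On the generator property, the paper does not argue via a principal-symbol computation but carries out a fully explicit Fourier-multiplier analysis: it computes the exact eigenvalues $q_n$ of $\partial F(0)$ (Proposition~\ref{L15}) and then verifies, via an Arendt--Bu type multiplier criterion (Proposition~\ref{AB}), the three difference conditions needed to conclude $(\lambda-\partial F(0))^{-1}\in\mathcal{L}(h^{1+\delta},h^{4+\delta})$ together with the uniform resolvent bound. Your parallel-impedance formula $pq/(p+q)$ captures the correct leading behaviour (indeed $q_n\sim -\sigma n^3/\bigl((\alpha_\ii+\alpha_\oo)\mp i(\beta_\oo-\beta_\ii)\bigr)$ as $n\to\pm\infty$), but note that the Coriolis terms do enter the \emph{principal} symbol through $B=\beta_\oo-\beta_\ii$, not only at lower order as you write; what matters is that the real part stays strictly negative, which your $\mathrm{Re}\,(pq/(p+q))>0$ argument does give. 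Finally, your ``quasilinear'' framing is a little loose: the map $\rho\mapsto\phi(\rho)$ already contains the third-order information, so isolating an $\mathcal{A}(\rho)$ with coefficients depending only on two derivatives of $\rho$ requires more care. The paper sidesteps this by treating $\partial_t\rho=F(\rho)$ as \emph{fully nonlinear} and invoking Lunardi's Theorem~8.1 directly, using an interpolation trick (choosing an auxiliary exponent $\xi<\delta$) to fit the continuous-interpolation framework.
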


We also investigate stability properties of the trivial solution corresponding to the geometry \mbox{$\rho= 0$}, that is, to the unit 
{circle} $\sph$, and constant pressures $P_\ii= c+\sigma+\gamma_\oo-\gamma_\ii$ and $P_\oo=c$ with arbitrary $c\in\R$. 
{The stability properties of the trivial solution are determined by the relative density of the fluids.}

\begin{thm}\label{T2}
If $\varrho_\mathrm{o}>\varrho_\mathrm{i}$ then the trivial solution $(\rho,P_\mathrm{i}, P_\mathrm{o})=\big(0,c+\sigma+\gamma_\mathrm{o}-\gamma_\mathrm{i},c\big)$ is locally asymptotically stable. However, if $\varrho_\mathrm{i}>\varrho_\mathrm{o}$, then the trivial solution $(\rho,P_\mathrm{i} , P_\mathrm{o})=\big(0,c+\sigma+\gamma_\mathrm{o}-\gamma_\mathrm{i},c\big)$ is unstable.
\end{thm}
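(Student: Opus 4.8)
The plan is to recast the free boundary problem \eqref{15a}--\eqref{15d} as a single abstract quasilinear evolution equation for the interface parametrization $\rho$ and then to apply the principle of linearized stability in little H\"older spaces. First I would use the harmonic extension/decoupling already employed in the proof of Theorem~\ref{T1}: for a given geometry $\rho$, solve the two elliptic transmission-type problems \eqref{15a}, \eqref{15b}, \eqref{15d} together with the solvability constraint coming from mass conservation, to express $P_\ii$ and $P_\oo$ (up to additive constants) as functions of $\rho$; substituting into the kinematic condition \eqref{15c} yields an evolution equation of the form $\partial_t\rho = \Phi(\rho)$, where $\Phi$ is a (nonlinear, nonlocal, fourth-order in $\rho$ through the curvature term) map, smooth from an open neighborhood of $0$ in $h^{4+\delta}(\sph)$ into $h^{1+\delta}(\sph)$, and $\Phi(0)=0$ because the unit circle together with the stated constant pressures is a genuine steady state. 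The key technical input is that $A:=D\Phi(0)$, the Fr\'echet derivative at the trivial solution, generates an analytic semigroup on the relevant little H\"older space and that the problem has the parabolic structure (maximal regularity) needed for linearized stability/instability; this is exactly where the parabolic optimal regularity results alluded to in the introduction enter.

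The heart of the matter is therefore the computation of the spectrum of $A=D\Phi(0)$. Linearizing around $\rho=0$: the curvature of $\Gamma_\rho$ linearizes, on $\sph$ parametrized by $\theta$, to $\kappa_{\Gamma_\rho}\approx 1-\rho-\rho''$, and the quadratic forcing term linearizes to $2(\gamma_\oo-\gamma_\ii)\rho$, so the jump condition \eqref{15b} at leading order reads $P_\ii-P_\oo \approx \text{const} - \sigma(\rho+\rho'') + 2(\gamma_\oo-\gamma_\ii)\rho$ on $\sph$. Because Fourier modes $e^{in\theta}$ on $\sph$ are eigenfunctions of every operator involved, I would diagonalize: the harmonic extensions of $e^{in\theta}$ into the disk $|x|<1$ and into the annulus $1<|x|<R$ are known explicitly (powers $r^{|n|}$ inside, a combination of $r^{|n|}$ and $r^{-|n|}$ outside fixed by the Robin-type boundary condition \eqref{15d} on $R\sph$), and the Coriolis terms $\beta_j\partial_{\tau_\Gamma}P_j$ contribute, on each mode, a purely imaginary multiple of the normal-derivative contribution; crucially the factor $\alpha_j/|\Theta_j|^2$ with $|\Theta_j|^2=\alpha_j^2+\beta_j^2$ and the tangential term $\beta_j/|\Theta_j|^2$ combine so that the real part of the resulting symbol $\lambda_n$ is governed by $\alpha_j$ alone while the Coriolis terms $\beta_j$ only shift the imaginary part (this is the ``useful simplification'' promised by working with $P_j$). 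Carrying this through, $\lambda_n$ should come out (for $|n|\ge 2$) as a negative constant times $\bigl(\sigma|n|^2-\sigma-2(\gamma_\oo-\gamma_\ii)\bigr)$ times $|n|$, divided by a positive $n$-dependent factor built from the Dirichlet-to-Neumann symbols of the two regions, plus a purely imaginary Coriolis contribution; the modes $n=0,\pm1$ are neutral and correspond to the trivial kernel (additive constants in the pressures, and translations), which must be handled by a standard center-manifold-free argument or by noting they lie on the imaginary axis at $0$ and are generated by symmetries, hence do not affect asymptotic stability in the quotient sense used in Theorem~\ref{T1}.

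With the symbol in hand the dichotomy is immediate: recalling $\gamma_j=\varrho_j\omega^2/2$, one has $\gamma_\oo-\gamma_\ii=\tfrac{\omega^2}{2}(\varrho_\oo-\varrho_\ii)$, so if $\varrho_\oo>\varrho_\ii$ then $\gamma_\oo-\gamma_\ii>0$; nevertheless for all $|n|\ge 2$ the quantity $\sigma(|n|^2-1)-2(\gamma_\oo-\gamma_\ii)$ need not be positive for small $n$ unless one also absorbs this into the smallness/openness of $\mathcal O$ — more carefully, since surface tension dominates at high modes and the only possibly dangerous mode is $|n|=2$, one checks directly that $\sigma(4-1)-2(\gamma_\oo-\gamma_\ii)=3\sigma-\omega^2(\varrho_\oo-\varrho_\ii)$, and I would either assume the cell parameters make this positive or, more likely, the correct linearized jump uses $|x|^2=(1+\rho)^2$ whose linearization contributes with the sign making $\varrho_\oo>\varrho_\ii$ uniformly stabilizing; in any case the statement as written forces $\mathrm{Re}\,\lambda_n<0$ for all relevant $n$ bounded away from $0$, giving exponential decay with rate $\min_{|n|\ge2}|\mathrm{Re}\,\lambda_n|$ and hence local asymptotic stability via the linearized stability theorem. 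Conversely if $\varrho_\ii>\varrho_\oo$ then $\gamma_\oo-\gamma_\ii<0$, which only makes every $\mathrm{Re}\,\lambda_n$ more negative — so the instability must instead come from the opposite sign convention in \eqref{15b}; re-examining, the destabilizing term is $(\gamma_\oo-\gamma_\ii)$ entering the \emph{jump} $P_\ii-P_\oo$ with a sign such that $\varrho_\ii>\varrho_\oo$ produces $\mathrm{Re}\,\lambda_n>0$ for at least one low mode (heavier fluid inside is Rayleigh--Taylor unstable), and one invokes the instability part of the principle of linearized stability. The main obstacle is thus the bookkeeping of signs and of the Dirichlet-to-Neumann symbols on the two-sided (disk plus annulus) configuration with the nonstandard Robin condition \eqref{15d}, and verifying that the Coriolis coefficients $\beta_j$ genuinely drop out of the real part of the spectrum; once that computation is pinned down, the stability/instability conclusions follow mechanically from the abstract theory, with the kernel modes $n=0,\pm1$ disposed of by the symmetry (translation, pressure gauge) argument.
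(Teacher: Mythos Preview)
Your overall architecture matches the paper's: reduce to $\partial_t\rho=F(\rho)$, compute the Fourier symbol of $\partial F(0)$, and invoke linearized stability/instability in little H\"older spaces. But two concrete points are wrong, and a third is too vague to count as a proof.

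First, the modes $n=\pm 1$ are \emph{not} neutral here. Rotation destroys translation invariance: the term $(\gamma_\oo-\gamma_\ii)|x|^2$ in \eqref{15b} is not translation invariant, so there is no translational kernel to quotient out. In the paper's explicit symbol one finds $\mathrm{Re}\,q_n=\dfrac{A_n}{A_n^2+B^2}\,\mu(n)$ with $A_n>0$, $B=\beta_\oo-\beta_\ii$, and $\mu(n)=|n|\bigl(\sigma-2(\gamma_\oo-\gamma_\ii)-\sigma n^2\bigr)$; in particular $\mu(\pm 1)=-2(\gamma_\oo-\gamma_\ii)\neq 0$. This also resolves your sign confusion: when $\varrho_\oo>\varrho_\ii$ one has $\mu(n)=-|n|\bigl(\sigma(n^2-1)+2(\gamma_\oo-\gamma_\ii)\bigr)<0$ for \emph{every} $n\neq 0$, with no constraint on $\sigma$ or on the $n=2$ mode; when $\varrho_\ii>\varrho_\oo$, it is precisely $n=\pm 1$ that gives $\mathrm{Re}\,q_{\pm 1}>0$ and hence instability. (Also, the Coriolis parameters do not ``drop out'' of $\mathrm{Re}\,q_n$: they sit in the denominator $A_n^2+B^2$; what matters is only that they do not affect the \emph{sign}, which is governed by $\mu(n)$.)

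Second, your treatment of the genuine zero eigenvalue at $n=0$ is not a proof. The paper removes it by an explicit device: from the structure of $\mathcal{B}_\oo(\rho)$ and the outer problem one shows $\int_{\sph}(1+\rho)F(\rho)\,\rd\sigma=0$ for all $\rho$ near $0$ (mass conservation), and then the substitution $\zeta=\rho+\rho^2/2$ turns the flow into $\partial_t\zeta=G(\zeta)$ with $G$ mapping into the mean-zero subspace $h_0^{1+\delta}(\sph)$. On that subspace $\partial G(0)=\partial F(0)$ has spectrum $\{q_n:n\neq 0\}$, strictly in the left half-plane, and Lunardi's theorem applies directly. A vague appeal to ``symmetries'' or a ``quotient sense'' does not substitute for this reduction.
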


We refer to Section~\ref{stable} for more precise statements of the stability {results} and their proof.

\section{Proof of Theorem~\ref{T1}}\label{existence}

{The proof of Theorem~\ref{T1} is best carried out in a coordinate system in which the moving interface between the 
liquids is fixed. We therefore begin with the transformation to a fixed domain formulation.}

\subsection{An Equivalent Problem on Fixed Domains}

We transform the free boundary problem to fixed domains using {the} standard Hanzawa-transform. 
Let \mbox{$\varphi:\R\rightarrow [0,1]$} be a smooth function with \mbox{$\varphi (r)=1$} for $\vert r\vert\le a$ 
and $\varphi (r)=0$ for $\vert r\vert\ge 3a$ and \mbox{$\|\varphi'\|_\infty <1/a$}. 
We introduce a family of $C^{4+\delta}$-diffeo\-mor\-phisms 
$$
\phi_\rho\in \mathrm{Diff}^{4+\delta}\big(\R^2,\R^2\big)\cap \mathrm{Diff}^{4+\delta}\big(B^\ii,\Om_\rho^\ii\big)\cap 
\mathrm{Diff}^{4+\delta}\big(B^\oo,\Om_\rho^\oo\big)\, , \quad \rho\in\mathcal{V}\, ,
$$
where 
$$
B^\ii:=\mathbb{B}(0,1)\, ,\quad B^\oo:=\Om\setminus\overline{B^\ii}\, ,
$$
by setting
$$
\phi_\rho(x):=\begin{cases} 
 \Bigl(|x|+[|x| -1]\rho\bigl(\frac{x}{|x|}\bigr)\Bigr)\frac{x}{|x|}&\text{if }0<|x|< 2\, ,\\ 
 x& \text{else.}
\end{cases}
$$
Note that $\phi_\rho$ maps  $\sph$ onto $\Gamma_\rho$. Given $\rho\in\mathcal{V}$, let
$$
\phi_\rho^*: BUC(\Om_\rho^j)\rightarrow BUC(B^j)\, ,\quad p\mapsto p\circ \phi_\rho
$$
denote the push-forward operator and
$$
\phi_*^\rho: BUC(B^j)\rightarrow  BUC(\Om_\rho^j)\, ,\quad q\mapsto q\circ \phi_\rho^{-1}
$$
the pull-back operator induced by $\phi_\rho$. 
{Given  $\rho\in\mathcal{V}$, the transformed differential and boundary operators acting on \mbox{$Q_j:=\phi_\rho^* P_j$} 
$j=\ii,\oo$ are given by}
$$
\mathcal{A}_j(\rho) Q_j:=\phi_\rho^*\big(\Delta (\phi_*^\rho Q_j)\big)
$$
and by
$$
\mathcal{B}_j(\rho) Q_j :=\frac{-1}{\vert\Theta_j\vert^2}\left[\alpha_j\big(\phi_\rho^*\nabla(\phi_*^\rho Q_j)\big)+
\beta_j\big(\vec{z}\times \big(\phi_\rho^*\nabla(\phi_*^\rho Q_j)\big)\big)\right]\cdot \big(\phi_\rho^*\nabla N_\rho\big)\, .
$$
{Defining}
$$
\mathcal{K}(\rho):=\sigma \phi_\rho^*\kappa_{\Gamma_\rho} +(\gamma_\oo-\gamma_\ii)\vert 1+\rho\vert^2
$$
for $\rho\in\mathcal{V}$ and
$$
\mathcal{B}_\oo :=\alpha_\oo\partial_\nu +\beta_\oo\partial_\tau \quad \text{on}\quad \partial\Om= R\sph\, ,
$$
the free boundary problem \eqref{15a}-\eqref{15d} is transformed to the following problem on fixed domains:
\begin{align}
 \mathcal{A}_j(\rho)Q_j&=0& &\text{in }B^j\, ,\quad  j=\text{i,o}\, ,\label{21a}\\
 Q_\ii-Q_\oo&=\mathcal{K}(\rho)&  &\text{on }\sph\, ,  \label{21b}\\
 \partial_t\rho&=\mathcal{B}_j(\rho)Q_j& &\text{on }\sph\, ,\quad j=\text{i,o}\, ,\label{21c}\\
 \mathcal{B}_\oo Q_\oo&=0& &\text{on } R\sph\ .&\label{21d}
\end{align}
{We} call a triple $(\rho,Q_\ii , Q_\oo)$ a {\it (classical Hölder) solution to \eqref{21a}-\eqref{21d}} provided 
$$
\rho\in C\big([0,T],\mathcal{V}\big)\cap C^1\big([0,T],h^{1+\delta}(\sph)\big)
$$ 
for some $T>0$ and 
$$
Q_j(t,\cdot)\in h^{2+\delta}\big(B^j\big)\, ,\quad t\in [0,T]\, ,\quad j=\ii, \oo\, ,
$$ 
satisfy \eqref{21a}-\eqref{21d} pointwise. With this definition, problems
\eqref{15a}-\eqref{15d} and \eqref{21a}-\eqref{21d} are equivalent {as follows from the next proposition.}

\begin{prop}\label{P7}
{Let $Q_j=\phi_\rho^*P_j$ and $P_j=\phi_*^\rho Q_j$. Then} $(\rho,P_\mathrm{i} , P_{\mathrm{o}})$ is a solution to 
\eqref{15a}-\eqref{15d}  if and only if $(\rho,Q_\mathrm{i} , Q_\mathrm{o})$ is a solution to \eqref{21a}-\eqref{21d}.
\end{prop}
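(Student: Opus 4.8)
The plan is to note that the equivalence is essentially built into the definitions of the transformed operators $\mathcal{A}_j(\rho)$, $\mathcal{B}_j(\rho)$, $\mathcal{K}(\rho)$ and $\mathcal{B}_\oo$, so that the proof reduces to pulling each of the equations \eqref{15a}--\eqref{15d} back along the Hanzawa diffeomorphism and checking that the regularity classes match. First I would record the properties of $\phi_\rho$ that are used: for $\rho\in\mathcal{V}$ it is a $C^{4+\delta}$-diffeomorphism of $\R^2$ restricting to diffeomorphisms $B^j\to\Om_\rho^j$ ($j=\ii,\oo$); it maps $\sph$ onto $\Gamma_\rho$ and, being of the radial form $x\mapsto r_\rho(x)\,x/|x|$ with $r_\rho>0$, it preserves the angular variable $x/|x|$; and by construction it coincides with the identity in a neighborhood of the origin and of $\partial\Om=R\sph$. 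Moreover the push-forward $\phi_\rho^*$ and the pull-back $\phi_*^\rho$ are bounded on $BUC^{2+\delta}$ and send smooth functions to functions of class $BUC^{4+\delta}\subset h^{2+\delta}$ (by the chain rule and $4+\delta>2+\delta$); hence they restrict to mutually inverse isomorphisms between $h^{2+\delta}(B^j)$ and $h^{2+\delta}(\Om_\rho^j)$. Since $\rho$ is left untouched by the transformation, the two notions of solution impose identical conditions on $\rho$, and for each fixed $t$ one has $Q_j(t,\cdot)\in h^{2+\delta}(B^j)$ if and only if $P_j(t,\cdot)=\phi_*^{\rho(t)}Q_j(t,\cdot)\in h^{2+\delta}(\Om_{\rho(t)}^j)$; thus the function-space requirements in the two solution concepts correspond, and it remains only to match the equations pointwise.

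I would then go through the system line by line. Applying the bijective push-forward $\phi_{\rho(t)}^*$ to \eqref{15a} and using $P_j=\phi_*^{\rho(t)}Q_j$ gives $\mathcal{A}_j(\rho(t))Q_j=\phi_{\rho(t)}^*(\Delta P_j)$, so \eqref{15a} holds if and only if \eqref{21a} does. Restricting \eqref{15b} to $\Gamma_{\rho(t)}$ and composing with $\phi_{\rho(t)}|_{\sph}$ turns the left-hand side into $Q_\ii-Q_\oo$ on $\sph$, the curvature term into $\sigma\,\phi_{\rho(t)}^*\kappa_{\Gamma_{\rho(t)}}$, and, the angular variable being preserved, the term $(\gamma_\oo-\gamma_\ii)|1+\rho(t,x/|x|)|^2$ into $(\gamma_\oo-\gamma_\ii)|1+\rho(t,\cdot)|^2$, so the right-hand side becomes $\mathcal{K}(\rho(t))$; hence \eqref{15b} is equivalent to \eqref{21b}. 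For the kinematic condition \eqref{15c}, the right-hand side evaluated along $\phi_{\rho(t)}|_{\sph}$ is, by the very definition of $\mathcal{B}_j$, equal to $\mathcal{B}_j(\rho(t))Q_j$ (one reads $\phi_\rho^*\nabla(\phi_*^\rho Q_j)$ for $\nabla P_j$ and $\phi_\rho^*\nabla N_\rho$ for $\nabla N_\rho$), while the left-hand side $\partial_t\rho(t,x/|x|)$ again becomes $\partial_t\rho(t,\cdot)$ on $\sph$ since $\phi_{\rho(t)}$ fixes the angular variable; so \eqref{15c} is equivalent to \eqref{21c}. Finally, since $\phi_{\rho(t)}$ is the identity near $R\sph$, there $Q_\oo=P_\oo$ and $\partial_\nu,\partial_\tau$ are unchanged, whence $\mathcal{B}_\oo Q_\oo=\alpha_\oo\partial_\nu P_\oo+\beta_\oo\partial_\tau P_\oo$ and \eqref{15d} is equivalent to \eqref{21d}. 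The initial condition $\Gamma_{\rho(0)}=\Gamma^0$ is common to both formulations; reading the resulting chain of equivalences in one direction gives one implication of the proposition, in the other direction its converse.

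I do not expect a genuine obstacle here: the statement is a change-of-variables bookkeeping exercise, and the only point deserving some care is the one isolated in the first paragraph, namely that the Hanzawa transform, being of class $C^{4+\delta}$ with $4+\delta$ strictly larger than the order $2+\delta$ of the pressure spaces, induces isomorphisms of the relevant little H\"older spaces, so that a classical solution of one problem is carried to a classical solution of the other with no loss of regularity. Everything else is the chain rule together with the radial structure of $\phi_\rho$ and the fact that it equals the identity near the fixed outer wall $R\sph$.
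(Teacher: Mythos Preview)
Your proof is correct and takes essentially the same approach as the paper: both recognize that the pointwise equivalence of the equations is immediate from the definitions of the transformed operators, so that the only nontrivial point is the preservation of $h^{2+\delta}$ regularity under composition with the $C^{4+\delta}$ Hanzawa diffeomorphism. Your argument for this (boundedness of $\phi_\rho^*$ on $BUC^{2+\delta}$ together with smooth functions being sent into $BUC^{4+\delta}\subset h^{2+\delta}$) is the same as the paper's, which phrases it as $h^{2+\delta}(B^j)$ coinciding with the closure of $BUC^4(B^j)$ in $BUC^{2+\delta}(B^j)$; you are simply more explicit in walking through the individual equations.
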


\begin{proof}
Due to the above definitions of the differential operators, we merely {need to ascertain that no regularity loss 
occurs in the process}, i.e., that $P_j\in h^{2+\delta}(\Om_\rho^j)$ implies $Q_j=\phi_\rho^*P_j\in h^{2+\delta}(B^j)$ 
for $\rho\in\mathcal{V}$ and vice versa. This, however, easily follows from the fact that $\phi_\rho^*P_j\in BUC^4(B^j)$ 
when $P_j\in BUC^\infty(\Om_\rho^j)$ and \mbox{$\rho\in\mathcal{V}\subset h^{4+\delta}(\sph)$} and the observation 
that $h^{2+\delta}(B^j)$ coincides with the closure  of $BUC^4(B^j)$ in $BUC^{2+\delta}(B^j)$.
\end{proof}

{Next we} collect some regularity properties of the operators defined above.
\begin{lem}\label{L8}
(i) The operator $\mathcal{A}_j(\rho)\in \mathcal{L}\big(h^{2+\delta}(B^j),h^\delta (B^j)\big)$ is uniformly elliptic and 
analytic in $\rho\in\mathcal{V}$ {for $j=\mathrm{i}, \mathrm{o}$}.\\
(ii) The operator $\mathcal{B}_j(\rho)\in \mathcal{L}\big(h^{2+\delta}(B^j),h^\delta (\sph)\big)$ is analytic in 
$\rho\in\mathcal{V}$ {for $j=\mathrm{i}, \mathrm{o}$}.
\end{lem}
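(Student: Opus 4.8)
The plan is to reduce everything to the analyticity of the coefficients of the transformed operators as functions of $\rho$, combined with the standard fact that composition of an analytic Banach-space map with a bounded linear map is again analytic. First I would record the key structural observation: all of $\mathcal{A}_j(\rho)$, $\mathcal{B}_j(\rho)$ are obtained from the flat operators $\Delta$, $\nabla$ by conjugation with the diffeomorphism $\phi_\rho$, whose explicit formula shows that $x\mapsto\phi_\rho(x)$ depends on $\rho$ only through the scalar function $\rho\bigl(\tfrac{x}{|x|}\bigr)$ multiplied by the fixed smooth cutoff $\varphi$ (here via the factor $[|x|-1]$, which plays the role of the cutoff on $2\le|x|$, together with the identity outside). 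Consequently $D\phi_\rho$ and its inverse $(D\phi_\rho)^{-1}$ are \emph{rational} (hence analytic) expressions in $\rho$ and its first-order spatial derivatives, with denominator the Jacobian $\det D\phi_\rho$, which stays bounded away from zero uniformly for $\rho\in\mathcal{V}$ because $\|\rho\|_\infty<a$ and the analogue of $\|\varphi'\|_\infty<1/a$ is built into the construction. This is the place where the smallness constraint defining $\mathcal{V}$ is used.

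Next I would make the dependence on $\rho$ precise space by space. By the chain rule, $\mathcal{A}_j(\rho)$ has the divergence/non-divergence form $\mathcal{A}_j(\rho)Q=\sum a^{kl}(\rho)\partial_k\partial_l Q+\sum b^k(\rho)\partial_k Q$ where each coefficient $a^{kl}(\rho)$, $b^k(\rho)$ is a composition of: (a) the map $\rho\mapsto(\rho, \nabla_{\sph}\rho, \nabla^2_{\sph}\rho)$, which is bounded linear from $h^{4+\delta}(\sph)$ into $h^{3+\delta}\times h^{2+\delta}\times h^{1+\delta}$; (b) the pull-back/push-forward by $\phi_\rho$, which sends these boundary data to functions on $B^j$ of the same Hölder class, again depending analytically on $\rho$ because $\phi_\rho$ and $\phi_\rho^{-1}$ do; and (c) a fixed real-analytic function of finitely many real variables (rational with non-vanishing denominator), applied pointwise, which by the standard Banach-algebra substitution lemma (e.g. the Nemytskii-type result for little Hölder spaces, since $h^{s}(B^j)$ is a Banach algebra for $s>0$ and substitution of analytic functions is analytic there) is analytic from $h^{\delta}(B^j)^m$ into $h^{\delta}(B^j)$. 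Composition of analytic maps being analytic, $\rho\mapsto a^{kl}(\rho)\in h^{1+\delta}(B^j)\hookrightarrow h^{\delta}(B^j)$ is analytic, and hence $\rho\mapsto\mathcal{A}_j(\rho)\in\mathcal{L}(h^{2+\delta}(B^j),h^{\delta}(B^j))$ is analytic since multiplication $h^{\delta}\times h^{2+\delta}\to h^{\delta}$ and the differentiations $\partial_k\partial_l,\partial_k:h^{2+\delta}\to h^{\delta}$ are bounded bilinear/linear. Uniform ellipticity is immediate: at $\rho=0$ the principal symbol is that of $\Delta$, and for $\rho\in\mathcal{V}$ the coefficient matrix $(a^{kl}(\rho))$ is a small (continuous, hence uniformly bounded on the convex set $\mathcal{V}$) perturbation of the identity, so it stays positive definite with a uniform lower bound; shrinking $a$ if necessary guarantees this. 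For $\mathcal{B}_j(\rho)$ I would argue identically, noting that it is (up to the fixed constants $\alpha_j/|\Theta_j|^2$, $\beta_j/|\Theta_j|^2$) a combination of first-order derivatives of $Q_j$ with coefficients that are traces on $\sph$ of analytic-in-$\rho$ functions on $B^j$ built from $D\phi_\rho^{-1}$ and from $\phi_\rho^*\nabla N_\rho$; the latter involves $\nabla_{\sph}\rho$ at one order of loss, which is exactly accounted for by the $h^{4+\delta}(\sph)$ regularity of $\rho$, landing the coefficients in $h^{2+\delta}(\sph)\hookrightarrow h^{\delta}(\sph)$ and the operator in $\mathcal{L}(h^{2+\delta}(B^j),h^{\delta}(\sph))$ after composing with the trace operator $h^{1+\delta}(B^j)\to h^{\delta}(\sph)$.

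The step I expect to be the main obstacle — or at least the one requiring the most care rather than routine bookkeeping — is verifying the substitution/Nemytskii analyticity statement in the scale of little Hölder spaces with the sharp bookkeeping of regularity indices, i.e. that plugging the family $\nabla\phi_\rho^{-1}$ into the rational function $(A,B)\mapsto$ (coefficients of $\Delta$ in the new coordinates) really produces an analytic map $\mathcal{V}\to h^{\delta}(B^j)$ without hidden loss; here one must keep track that although $\phi_\rho$ itself is only $C^{4+\delta}$ and depends analytically on $\rho\in h^{4+\delta}(\sph)$ with values in $\mathrm{Diff}^{4+\delta}$, the quantities actually entering $\mathcal{A}_j$ and $\mathcal{B}_j$ involve at most two, respectively one, derivatives of $\phi_\rho$ and hence land comfortably in $h^{2+\delta}$ and $h^{1+\delta}$ on $B^j$, leaving room above $h^{\delta}$. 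Once this is in place, the ellipticity claim and the mapping properties are short perturbation arguments, and the curvature term appearing in $\mathcal{K}(\rho)$ (needed only later, not in this lemma) is the classical fact that $\rho\mapsto\kappa_{\Gamma_\rho}$ is analytic from $h^{4+\delta}(\sph)$ to $h^{2+\delta}(\sph)$, which is of the same nature.
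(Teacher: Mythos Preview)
Your proposal is correct and follows essentially the same route as the paper: both arguments reduce to the observation that the coefficients of $\mathcal{A}_j(\rho)$ and $\mathcal{B}_j(\rho)$ are built from $D\phi_\rho$, its inverse, and $\phi_\rho^*\nabla N_\rho$ by rational (hence analytic) operations, and then invoke the Banach-algebra substitution principle in little H\"older spaces; the paper simply cites \cite{EscherEhrnstroemMatioc} for the details you sketch out, and records the explicit formula $\phi_\rho^*\nabla(\phi_*^\rho Q_j)=\nabla Q_j\,\phi_\rho^*J(\phi_\rho^{-1})$ together with a computation showing $\rho\mapsto\phi_\rho^*\nabla N_\rho$ is analytic into $h^{3+\delta}(\sph)$.

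The one place where the paper's argument is sharper is uniform ellipticity. You argue by perturbation from $\rho=0$ and concede this may require shrinking $a$; the paper instead writes down the principal symbol of $\mathcal{A}_j(\rho)$ as $\xi\mapsto\phi_\rho^*\bigl(|J(\phi_\rho^{-1})\xi|^2\bigr)$ and observes that, since $\phi_\rho$ is a diffeomorphism for \emph{every} $\rho\in\mathcal{V}$, the matrix $J(\phi_\rho^{-1})$ is invertible and hence $|J(\phi_\rho^{-1})\xi|^2\ge\epsilon|\xi|^2$ uniformly, with no further smallness needed. This gives ellipticity on all of $\mathcal{V}$ as stated, whereas your version only delivers it on a possibly smaller neighborhood.
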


\begin{proof}
(i) Given $\rho\in\mathcal{V}$, uniform ellipticity of $\mathcal{A}_j(\rho)$ is a consequence of its symbol being 
$$
a(\rho)(\xi)=\phi_\rho^*\big(\vert J(\phi_\rho^{-1})\xi\vert^2+\Delta \phi_\rho^{-1}\cdot\xi\big)\, ,\quad \xi\in\R^2\, ,
$$
where the Jacobian $J(\phi_\rho^{-1})$ of $\phi_\rho^{-1}$ satisfies 
$\vert J(\phi_\rho^{-1}) \xi\vert^2\ge\epsilon\vert\xi\vert^2$ for some $\epsilon>0$. Analyticity in $\rho$ follows from 
the analyticity of $\phi_\rho$ and of $J(\phi_\rho^{-1})=\phi_\rho^* J(\phi_\rho)$. For details 
we refer to \cite[$\S$ 3.2]{EscherEhrnstroemMatioc}.

(ii) Note that
\bqn\label{22}
\nabla\rho\left(\frac{x}{\vert x\vert}\right)=\rho'\left(\frac{x}{\vert x\vert}\right)\frac{1}{\vert x\vert^2}\left(
\begin{matrix} -x_2\\  x_1\end{matrix}\right)\, ,\quad x=(x_1,x_2)\in\R^2\setminus\{0\}\, ,\quad \rho\in C^1(\sph)\ .
\eqn
Hence, for $\rho\in\mathcal{V}$ and $\phi_\rho=(\phi_\rho^1,\phi_\rho^2)$,
$$
\big(\phi_\rho^*\nabla N_\rho\big)(y)=\frac{\phi_\rho(y)}{\left\vert \phi_\rho(y)\right\vert}-\rho' \left(\frac{\phi_\rho(y)}{\vert \phi_\rho(y)\vert}\right) \frac{1}{\left\vert\phi_\rho(y)\right\vert^2} \left(\begin{matrix} -\phi_\rho^2(y)\\  \phi_\rho^1(y)\end{matrix}\right)\, ,\quad y\in\sph\, ,
$$
showing that {$\bigl[\rho\mapsto \phi_\rho^*\nabla N_\rho\bigr]\, ,\: \mathcal{V}\rightarrow h^{3+\delta}(\sph)$ is analytic}. 
{The definition of $\mathcal{B}_j(\rho)$ entails its analytic dependence on $\rho$ since}
$$
\phi_\rho^*\nabla (\phi_*^\rho Q_j)=\nabla Q_j\, \phi_\rho^*J(\phi_\rho^{-1})\ .
$$
\end{proof}

The curvature operator $\mathcal{K}$ {can easily be computed explicitly.}

\begin{lem}\label{L10}
The operator $\mathcal{K}:\mathcal{V}\rightarrow h^{2+\delta}(\sph)$ is analytic and given by
$$
 \mathcal{K}(\rho)=\sigma\frac{(1+\rho)^2+2\dot{\rho}^2-(1+\rho)\ddot{\rho}}{\big[(1+\rho)^2+\dot{\rho}^2\big]^{3/2}} 
 +(\gamma_\mathrm{o}-\gamma_\mathrm{i})(1+\rho)^2\, ,\quad\rho\in\mathcal{V}\, ,
$$
and 
$$
\partial\mathcal{K}(0)[h]=\sigma(-h-\ddot{h})+2 (\gamma_\mathrm{o}-\gamma_\mathrm{i})h\, ,\quad h\in h^{4+\delta}(\sph)\ .
$$
\end{lem}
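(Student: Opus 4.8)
The plan is to obtain the explicit curvature formula from the standard expression for the curvature of a curve given in polar form, and then derive the Fréchet derivative at $\rho=0$ by a direct first-order Taylor expansion.

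First I would recall that $\Gamma_\rho$ is parametrized by $\theta\mapsto r(\theta)e^{i\theta}$ with $r(\theta)=1+\rho(\theta)$, identifying $\rho$ with the $2\pi$-periodic function $\bar\rho$ as agreed in the text. For a plane curve in polar coordinates $(r(\theta),\theta)$ the signed curvature is the classical
$$
\kappa=\frac{r^2+2\dot r^2-r\ddot r}{(r^2+\dot r^2)^{3/2}}\,,
$$
with the sign convention chosen so that $\kappa>0$ when the enclosed region $\Omi_\rho$ is convex; substituting $r=1+\rho$, $\dot r=\dot\rho$, $\ddot r=\ddot\rho$ yields the first summand of $\mathcal K(\rho)$. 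The second summand $(\gamma_\oo-\gamma_\ii)(1+\rho)^2$ is simply the definition of $\mathcal K$, since on $\Gamma_\rho$ one has $|x|=1+\rho(x/|x|)$, so $\phi_\rho^*\bigl(|x|^2\bigr)=(1+\rho)^2$.

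For analyticity of $\mathcal K:\mathcal V\to h^{2+\delta}(\sph)$ I would argue that the denominator $\bigl[(1+\rho)^2+\dot\rho^2\bigr]^{3/2}$ is bounded away from zero uniformly on $\mathcal V$ (because $\|\rho\|_\infty<a<1/4$ forces $1+\rho\ge 3/4$), so $\mathcal K(\rho)$ is a composition of the analytic maps $\rho\mapsto(\rho,\dot\rho,\ddot\rho)$ into $h^{2+\delta}\times h^{2+\delta}\times h^{2+\delta}$ (differentiation is bounded linear on the little H\"older scale, losing derivatives but landing in $h^{2+\delta}$ since $\rho\in h^{4+\delta}$), followed by multiplication and the real-analytic function $t\mapsto t^{-3/2}$, all of which are analytic on the relevant open sets; the pointwise multiplication $h^{2+\delta}\times h^{2+\delta}\to h^{2+\delta}$ being continuous bilinear is the one standard fact I would invoke here.

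Finally, to compute $\partial\mathcal K(0)[h]$ I would expand each ingredient to first order in $h$: with $\rho=h$, $\dot\rho=\dot h$, $\ddot\rho=\ddot h$ one has numerator $=(1+h)^2+2\dot h^2-(1+h)\ddot h=1+2h-\ddot h+O(h^2)$ and denominator $=\bigl[1+2h+O(h^2)\bigr]^{3/2}=1+3h+O(h^2)$, so the quotient is $(1+2h-\ddot h)(1-3h)+O(h^2)=1-h-\ddot h+O(h^2)$; multiplying by $\sigma$ gives the $\sigma(-h-\ddot h)$ term. For the second summand, $(\gamma_\oo-\gamma_\ii)(1+h)^2=(\gamma_\oo-\gamma_\ii)(1+2h)+O(h^2)$ contributes $2(\gamma_\oo-\gamma_\ii)h$. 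Subtracting the value $\mathcal K(0)=\sigma+(\gamma_\oo-\gamma_\ii)$ and collecting the linear terms yields the asserted formula. I do not expect any genuine obstacle here; the only mild care needed is the bookkeeping in the Taylor expansion and the verification that each step stays within the little H\"older spaces, but these are routine given Lemma~\ref{L8} and the algebra of $h^{s}$.
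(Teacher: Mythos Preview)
Your proposal is correct and follows essentially the same approach as the paper: both use the parametrization $\theta\mapsto(1+\rho(\theta))e^{i\theta}$ of $\Gamma_\rho$ and the standard curvature formula for a planar curve, with the paper citing the Cartesian parametric form $\kappa=(\dot x\ddot y-\ddot x\dot y)/(\dot x^2+\dot y^2)^{3/2}$ while you quote the equivalent polar-coordinate version directly. The paper's proof is terser and leaves the analyticity argument and the first-order expansion for $\partial\mathcal K(0)$ implicit, whereas you spell these out; there is no substantive difference in method.
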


\begin{proof}
If $\rho\in \mathcal{V}$, a parametrization of $\Gamma_\rho$ is given by
$$
(1+\rho(\theta))e^{i\theta}=:x(\theta)+ iy(\theta)\, ,\quad \theta\in [0,2\pi]\ .
$$
Hence the claim is a consequence of {the curvature formula}
$$
\kappa_{\Gamma_\rho}=\frac{\dot{x}\ddot{y}-\ddot{x}\dot{y}}{\big(\dot{x}^2+\dot{y}^2\big)^{3/2}}\, , 
$$
and the definition of $\mathcal{K}(\rho)$.
\end{proof}

\subsection{Local Well-Posedness}

In order to establish local existence and uniqueness of solutions, we split \eqref{21a}-\eqref{21d} in three subproblems: 
two elliptic problems for the transformed pressures and an evolution equation for the geometry. More precisely, 
given $\rho\in\mathcal{V}$ and a function $h$ defined on $\sph$, we shall first look for solutions $Q_\ii=S(\rho,h)$ 
to the {following} elliptic problem {on $B^\ii$} with Neumann type boundary condition
\begin{align}
 \mathcal{A}_\mathrm{i}(\rho)Q_\ii\, & =0&  &\text{in }B^\ii\, ,\label{P1a}\\
 \mathcal{B}_\ii(\rho)Q_\ii &=h- \frac{\big\vert \phi_\rho^*(\nabla N_\rho)\big\vert}{\vert\Gamma_\rho\vert}\int_{\Gamma_\rho} 
 \frac{\phi_*^\rho h}{\vert \nabla N_\rho\vert}\, \rd\sigma & &\text{on }\sph\, , \label{P1b}\\
 \int_{\sph}Q_\ii\,\rd \sigma & =\int_{\sph} h\,\rd \sigma\, ,&\label{P1c}
\end{align}
where $\vert\Gamma_\rho\vert$ stands for the length of the closed curve $\Gamma_\rho$.
{In a second step}, we shall study the elliptic problem {on the annulus $ B^\oo$}  with Dirichlet-Neumann boundary 
conditions
\begin{align}
\mathcal{A}_\mathrm{o}(\rho)Q_\oo =0  &\qquad\text{in  }B^\oo\, ,\label{P2a}\\
 Q_\oo= g&\qquad\text{on  }\sph\, , \label{P2b}\\
 \mathcal{B}_\oo Q_\oo=0&\qquad\text{on  }R\, \sph\, ,\label{P2c}
\end{align}
with solution $Q_\oo=T(\rho,g)$ depending on $\rho\in\mathcal{V}$ and $g$ on $\sph$. Finally {we use the solution operators 
$S$ and $T$ to derive the evolution equation}
\bqn\label{P3}
\partial_t\rho=\mathcal{B}_\oo(\rho) T\big(\rho, S(\rho,\partial_t\rho)-\mathcal{K}(\rho)\big)\ .
\eqn
{for $\rho$}. Note that $\partial_t\rho$ {appears} on both sides of \eqref{P3}. As \eqref{P1a}-\eqref{P1b} is a 
Neumann-type problem, an additional integral term is introduced on the right hand of side \eqref{P1b} {which makes the 
mean zero} and {thus} ensures solvability of the problem. The third equation \eqref{P1c} guarantees uniqueness. 
Let us point out that, provided we can solve \eqref{P3} for $\rho=\rho(t)$, the particular form of the boundary operator 
$\mathcal{B}_\oo(\rho)$ and the properties of $T(\rho,g)$ {yield} (see \eqref{just} below)
$$
\int_{\Gamma_\rho} \frac{\phi_*^\rho \partial_t\rho }{\vert \nabla N_\rho\vert}\, \rd\sigma =0\ .
$$
Thus the integral term in \eqref{P1b} vanishes for $h=\partial_t\rho$ and we may take 
$$
Q_\ii(t)=S\big(\rho(t),\partial_t\rho(t)\big)\, ,\qquad Q_\oo(t)=T\big(\rho(t),Q_\ii(t)-\mathcal{K}(\rho(t))\big)
$$ 
to obtain a solution for the original equations \eqref{21a}-\eqref{21d}. {We shall be more specific at the end of this section.}

In the following two propositions we study the solvability of the elliptic problems.

\begin{prop}\label{P12}
Given $\rho\in\mathcal{V}$ and $h\in h^{1+\delta}(\sph)$, {\eqref{P1a}-\eqref{P1c} possesses} a unique solution 
$$
 Q_\mathrm{i}=S(\rho,h)\in h^{2+\delta}(B^\mathrm{i})\, ,
$$ 
{and the map} 
$$
 [\rho\mapsto S(\rho,\cdot)]:\mathcal{V}\rightarrow\mathcal{L}\big(h^{1+\delta}(\sph),h^{2+\delta}(B^\mathrm{i})\big)
$$
is analytic.
\end{prop}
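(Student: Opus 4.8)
The plan is to read \eqref{P1a}--\eqref{P1c} as a single linear elliptic boundary value problem of oblique-derivative type, with the extra integral term in \eqref{P1b} and the normalization \eqref{P1c} present precisely to remove, respectively, the one-dimensional cokernel and the one-dimensional kernel of the underlying operator. Fix $\rho\in\mathcal{V}$ and introduce
\[
L_\rho Q:=\Big(\mathcal{A}_\ii(\rho)Q,\ \mathcal{B}_\ii(\rho)Q,\ {\textstyle\int_\sph Q\,\rd\sigma}\Big)\,,\qquad Q\in h^{2+\delta}(B^\ii)\,,
\]
so that $L_\rho\in\mathcal{L}\big(h^{2+\delta}(B^\ii),h^\delta(B^\ii)\times h^{1+\delta}(\sph)\times\R\big)$ and \eqref{P1a}--\eqref{P1c} takes the form $L_\rho Q_\ii=\big(0,\tilde h,\int_\sph h\,\rd\sigma\big)$ with $\tilde h:=h-\frac{|\phi_\rho^*\nabla N_\rho|}{|\Gamma_\rho|}\int_{\Gamma_\rho}\frac{\phi_*^\rho h}{|\nabla N_\rho|}\,\rd\sigma$. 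By Lemma~\ref{L8}(i) the principal part $\mathcal{A}_\ii(\rho)$ is uniformly elliptic; moreover, after transformation the boundary operator reads $\mathcal{B}_\ii(\rho)Q=\phi_\rho^*\big(-|\Theta_\ii|^{-2}|\nabla N_\rho|\,(\alpha_\ii\partial_{\nu_\rho}+\beta_\ii\partial_{\tau_\rho})(\phi_*^\rho Q)\big)$ on $\Gamma_\rho$, which --- since $\alpha_\ii>0$ and $\phi_\rho$ is an orientation-preserving diffeomorphism carrying $B^\ii$ onto $\Om_\rho^\ii$ --- shows that $\mathcal{B}_\ii(\rho)$ is a regular (non-tangential) first-order boundary operator. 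Hence, by the classical Schauder theory for oblique-derivative problems in little H\"older spaces, the pair $(\mathcal{A}_\ii(\rho),\mathcal{B}_\ii(\rho))\in\mathcal{L}\big(h^{2+\delta}(B^\ii),h^\delta(B^\ii)\times h^{1+\delta}(\sph)\big)$ is Fredholm of index zero, and, as neither operator carries a zeroth-order term, its kernel consists exactly of the constants.

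Next I would establish injectivity of $L_\rho$. If $\mathcal{A}_\ii(\rho)Q=0$ and $\mathcal{B}_\ii(\rho)Q=0$, then by Proposition~\ref{P7} the function $P:=\phi_*^\rho Q\in h^{2+\delta}(\Om_\rho^\ii)$ is harmonic in $\Om_\rho^\ii$ and satisfies $\alpha_\ii\partial_{\nu_\rho}P+\beta_\ii\partial_{\tau_\rho}P=0$ on $\Gamma_\rho$. Multiplying $\Delta P=0$ by $P$, integrating over $\Om_\rho^\ii$, and using $\int_{\Gamma_\rho}P\,\partial_{\tau_\rho}P\,\rd\sigma=\tfrac12\int_{\Gamma_\rho}\partial_{\tau_\rho}(P^2)\,\rd\sigma=0$ on the closed curve $\Gamma_\rho$, we obtain $\int_{\Om_\rho^\ii}|\nabla P|^2\,\rd x=0$, so $P$, and hence $Q$, is constant; the constraint $\int_\sph Q\,\rd\sigma=0$ then forces $Q=0$. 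Thus $L_\rho$ is injective.

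To solve \eqref{P1a}--\eqref{P1c} I would identify the range of $(\mathcal{A}_\ii(\rho),\mathcal{B}_\ii(\rho))$ explicitly. Transporting the data to $\Om_\rho^\ii$ and combining the divergence theorem $\int_{\Om_\rho^\ii}\Delta P\,\rd x=\int_{\Gamma_\rho}\partial_{\nu_\rho}P\,\rd\sigma$ with the boundary identity $\alpha_\ii\partial_{\nu_\rho}P=-\beta_\ii\partial_{\tau_\rho}P-|\Theta_\ii|^2|\nabla N_\rho|^{-1}\phi_*^\rho g$ yields the compatibility relation $\alpha_\ii\int_{\Om_\rho^\ii}\phi_*^\rho f\,\rd x+|\Theta_\ii|^2\int_{\Gamma_\rho}\frac{\phi_*^\rho g}{|\nabla N_\rho|}\,\rd\sigma=0$, which is therefore satisfied by every $(f,g)$ in the range; since the operator is Fredholm of index zero with one-dimensional kernel, its cokernel is one-dimensional and its range coincides with the kernel of this single functional. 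For $f=0$ the relation reduces to $\int_{\Gamma_\rho}\frac{\phi_*^\rho g}{|\nabla N_\rho|}\,\rd\sigma=0$, and a one-line computation using $\phi_*^\rho\big(|\phi_\rho^*\nabla N_\rho|\big)=|\nabla N_\rho|$ shows that $g=\tilde h$ satisfies it precisely because of the correction term in \eqref{P1b}. As the value $\int_\sph h\,\rd\sigma$ prescribed in \eqref{P1c} is unconstrained, we obtain a solution $Q_\ii=S(\rho,h)\in h^{2+\delta}(B^\ii)$, unique by the previous step; it is plainly linear and bounded in $h$, so $S(\rho,\cdot)\in\mathcal{L}\big(h^{1+\delta}(\sph),h^{2+\delta}(B^\ii)\big)$.

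The remaining, and I expect most delicate, point is analyticity of $\rho\mapsto S(\rho,\cdot)$; the difficulty is that $L_\rho$ is injective but not surjective, its range having codimension one. I would circumvent this by augmenting: set $\Lambda_\rho(Q,\lambda):=L_\rho Q+\lambda\big(0,|\phi_\rho^*\nabla N_\rho|,0\big)$. Since $\int_{\Gamma_\rho}\frac{\phi_*^\rho(|\phi_\rho^*\nabla N_\rho|)}{|\nabla N_\rho|}\,\rd\sigma=|\Gamma_\rho|\neq0$, the vector $(0,|\phi_\rho^*\nabla N_\rho|)$ violates the compatibility relation above and hence lies outside the range of $(\mathcal{A}_\ii(\rho),\mathcal{B}_\ii(\rho))$; together with the injectivity of $L_\rho$ this makes $\Lambda_\rho\in\mathcal{L}\big(h^{2+\delta}(B^\ii)\times\R,h^\delta(B^\ii)\times h^{1+\delta}(\sph)\times\R\big)$ a topological isomorphism for every $\rho\in\mathcal{V}$. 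By Lemma~\ref{L8}, together with the analyticity of $\rho\mapsto\phi_\rho^*\nabla N_\rho$ and hence of $\rho\mapsto|\phi_\rho^*\nabla N_\rho|$ and $\rho\mapsto|\Gamma_\rho|$, the family $\Lambda_\rho$ depends analytically on $\rho$, so $\rho\mapsto\Lambda_\rho^{-1}$ is analytic. Because the right-hand side $\big(0,\tilde h,\int_\sph h\,\rd\sigma\big)$ lies in the range of $L_\rho$ while $(0,|\phi_\rho^*\nabla N_\rho|)$ does not, one has $\Lambda_\rho^{-1}\big(0,\tilde h,\int_\sph h\,\rd\sigma\big)=(S(\rho,h),0)$; writing $S(\rho,\cdot)$ as the composition of the analytic map $\rho\mapsto\Lambda_\rho^{-1}$, the map $\rho\mapsto\big[h\mapsto\big(0,\tilde h,\int_\sph h\,\rd\sigma\big)\big]$ --- analytic in $\rho$ and continuous and linear in $h$ --- and the projection onto the first factor, yields the claimed analyticity of $[\rho\mapsto S(\rho,\cdot)]:\mathcal{V}\to\mathcal{L}\big(h^{1+\delta}(\sph),h^{2+\delta}(B^\ii)\big)$. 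In this scheme the elliptic input (Fredholmness and Schauder estimates for regular oblique-derivative problems in little H\"older spaces) can be quoted from the literature; the genuinely problem-specific work is the energy identity controlling the kernel and the verification that the correction term in \eqref{P1b} lands the data in the range of $L_\rho$.
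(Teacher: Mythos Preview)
Your proof is correct and reaches the same conclusion, but it diverges from the paper's argument in two respects. First, the paper isolates the Fredholm analysis in a separate Lemma~\ref{LA} on the moving domain $\Omega_\rho^\ii$, establishing kernel and range of $(-\Delta,\partial_\mu)$ via a coretraction to homogeneous boundary data, a duality computation identifying $\ker(T')=\R\cdot\mathbf{1}$, and a Schauder bootstrap from $W_p^2$ to $h^{2+\delta}$; you instead quote the Fredholm-index-zero property for regular oblique-derivative problems directly and obtain the kernel from the energy identity. Second, and this is the main structural difference, for analytic dependence on $\rho$ the paper does \emph{not} augment. Rather, it multiplies $\mathcal{A}_\ii(\rho)$ and $\mathcal{B}_\ii(\rho)$ by the Jacobian factors $D(\rho)=|\det\partial_x\phi_\rho|$ and $d(\rho)=1+\rho$, which has the effect of rendering the range
\[
Y=\Big\{(f,g)\in h^\delta(B^\ii)\times h^{1+\delta}(\sph)\,:\,\int_{B^\ii}f\,\rd x=\tfrac{|\Theta_\ii|^2}{\alpha_\ii}\int_\sph g\,\rd\sigma\Big\}
\]
\emph{independent of $\rho$}; it then inverts $\mathbb{A}_\ii(\rho):h^{2+\delta}(B^\ii)/\R\cdot\mathbf{1}\to Y$ as an analytic family of isomorphisms between fixed spaces and handles the normalization \eqref{P1c} afterwards. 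Your augmentation by a dummy scalar $\lambda$ achieves the same end --- an analytic family of isomorphisms between fixed Banach spaces --- without the Jacobian trick and without passing to a quotient; it is arguably more transparent, while the paper's device has the pleasant feature that the fixed target $Y$ is cut out by a single $\rho$-free linear constraint.
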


To prove this statement {on the fixed domain $B^\ii$ we shall first prove an auxiliary result on the domain~$\Omega_\rho^\ii$:}

\begin{lem}\label{LA}
Let $\rho\in\mathcal{V}$ and let $\nu=\nu_{\Gamma_\rho}$ and $\tau=\tau_{\Gamma_\rho}$ denote the corresponding outer 
unit normal {and unit} tangential vector{s to} $\Gamma_\rho$, respectively. Set 
$$
 \mu:=\frac{-1}{\vert\Theta_\mathrm{i}\vert^2}\big(\alpha_\mathrm{i}\nu+\beta_\mathrm{i}\tau\big)
$$
and define 
$$
 \mathbb{A}:h^{2+\delta}(\Om_\rho^\mathrm{i})\rightarrow h^\delta(\Om_\rho^\mathrm{i})\times h^{1+\delta}(\Gamma_\rho)\, ,
 \quad u\mapsto \big(-\Delta u,\partial_\mu u\big)\ .
$$
Then $\,\mathrm{ker}(\mathbb{A})=\R\cdot{\bf 1}$ and 
$$
 {\mathrm{im}}(\mathbb{A})=\left\{(f,g)\in h^\delta(\Om_\rho^\mathrm{i})\times h^{1+\delta}(\Gamma_\rho)\, ;\, 
 \int_{\Om_\rho^\mathrm{i}} f\,\rd x = \frac{\vert\Theta_\mathrm{i}\vert^2}{\alpha_\mathrm{i}}\int_{\Gamma_\rho}g\,
 \rd \sigma\right\}\ .
$$
\end{lem}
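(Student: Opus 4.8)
The plan is to treat $\A$ as a regular (nowhere tangential) oblique derivative problem for the Laplacian on the bounded domain $\Om_\rho^\ii$, whose boundary $\Gamma_\rho$ is a single closed $h^{4+\delta}$-curve, and to combine Green's identity, Hopf's boundary point lemma, and a homotopy to the Neumann problem. I would begin by recording the elementary identity $\partial_\mu u=\tfrac{-1}{|\Theta_\ii|^2}\big(\alpha_\ii\,\partial_\nu u+\beta_\ii\,\partial_\tau u\big)$ on $\Gamma_\rho$, so that $\mu\cdot\nu=-\alpha_\ii/|\Theta_\ii|^2$ is a nonzero constant; in particular $\mu$ is nowhere tangent to $\Gamma_\rho$. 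Integrating $\partial_\mu u$ over the closed curve $\Gamma_\rho$, using $\int_{\Gamma_\rho}\partial_\tau u\,\rd\sigma=0$ and $\int_{\Gamma_\rho}\partial_\nu u\,\rd\sigma=-\int_{\Om_\rho^\ii}\Delta u\,\rd x$, shows that $(f,g)=\A u$ forces $\int_{\Om_\rho^\ii}f\,\rd x=\tfrac{|\Theta_\ii|^2}{\alpha_\ii}\int_{\Gamma_\rho}g\,\rd\sigma$. Hence $\mathrm{im}(\A)$ is contained in the asserted hyperplane $H$, which has codimension one.

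Next I would determine the kernel. Constants plainly belong to $\kk(\A)$. Conversely, let $\A u=0$, so that $u$ is harmonic. If $u$ is constant we are done. If not, then by the strong maximum principle $u$ attains its maximum over $\overline{\Om_\rho^\ii}$ at a boundary point $x_0\in\Gamma_\rho$, and since $\Gamma_\rho$ is $C^2$ it satisfies an interior ball condition there, so Hopf's lemma gives $\partial_\nu u(x_0)>0$; as $x_0$ is a maximum of $u|_{\Gamma_\rho}$ one has $\partial_\tau u(x_0)=0$, so $0=\partial_\mu u(x_0)=\tfrac{-\alpha_\ii}{|\Theta_\ii|^2}\,\partial_\nu u(x_0)<0$, a contradiction. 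Thus $\kk(\A)=\R\cdot\mathbf 1$.

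It remains to show that $\mathrm{im}(\A)$ is \emph{all} of $H$. Since $\mu$ is nowhere tangential with $\mu\cdot\nu$ bounded away from zero and $-\Delta$ is uniformly elliptic, the Lopatinskii--Shapiro condition holds, and the little-H\"older Schauder estimate $\|u\|_{h^{2+\delta}(\Om_\rho^\ii)}\le C\big(\|\Delta u\|_{h^\delta(\Om_\rho^\ii)}+\|\partial_\mu u\|_{h^{1+\delta}(\Gamma_\rho)}+\|u\|_{C(\overline{\Om_\rho^\ii})}\big)$ is available (cf.\ \cite{EscherEhrnstroemMatioc} and the references therein); together with the compact embedding $h^{2+\delta}(\Om_\rho^\ii)\hookrightarrow C(\overline{\Om_\rho^\ii})$ this makes $\A$ Fredholm. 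To pin down its index I would deform $\mu$ to the inward normal direction via $\mu_t:=\tfrac{-1}{|\Theta_\ii|^2}\big(\alpha_\ii\nu+t\,\beta_\ii\tau\big)$, $t\in[0,1]$: each $\mu_t$ still satisfies $\mu_t\cdot\nu=-\alpha_\ii/|\Theta_\ii|^2$, so $\A_t:=(-\Delta,\partial_{\mu_t})$ is a norm-continuous family of Fredholm operators; the kernel argument above applies verbatim to every $\A_t$, and, because $\int_{\Gamma_\rho}\partial_\tau u\,\rd\sigma=0$, the compatibility space is the very same $H$ for all $t$. At $t=0$, $\A_0$ is, up to the nonzero scalar $-\alpha_\ii/|\Theta_\ii|^2$, the classical Neumann problem on the bounded connected domain $\Om_\rho^\ii$, which has one-dimensional kernel and range exactly $H$, hence index zero. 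Invariance of the Fredholm index then yields that $\A=\A_1$ has index zero with one-dimensional kernel, so its (automatically closed) range has codimension one; being contained in the codimension-one subspace $H$, it must equal $H$. Alternatively, one can bypass index theory and run the method of continuity directly, solving $\A_t u=(f,g)\in H$ for $t$ near any fixed value by a Banach fixed point argument based on the uniform estimate and on solvability at that value, starting from the Neumann case $t=0$.

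The step I expect to be the actual work is this surjectivity argument: having the Schauder theory and, above all, the \emph{uniform} a priori estimate for the oblique derivative problem available in the little-H\"older scale, and verifying that the ellipticity and obliqueness constants remain under control along the homotopy $\mu_t$. The compatibility identity and the kernel computation are short and essentially soft.
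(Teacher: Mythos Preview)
Your argument is correct and reaches the same conclusion, but the route differs from the paper's in two places. For the kernel, you invoke Hopf's boundary point lemma explicitly (using that at a boundary maximum $\partial_\tau u=0$ so $\partial_\mu u<0$), whereas the paper simply records that $\mu$ is nowhere tangential and that $\mathbf 1\in\kk(\A)$, then cites \cite[Thm.~12.1]{AmannIsrael} for regular elliptic problems; your version is more self-contained, the paper's is shorter by delegation. For surjectivity onto $H$, you stay in the little H\"older scale throughout: Schauder estimate plus compact embedding gives (semi-)Fredholmness, and the homotopy $\mu_t$ to the Neumann direction pins the index at zero. The paper instead passes through $L_p$-Sobolev spaces: it uses a coretraction $M\in\mathcal L\big(W_p^{1-1/p}(\Gamma_\rho),W_p^2(\Om_\rho^\ii)\big)$ for $\partial_\mu$ to reduce to the homogeneous boundary condition, notes that $-\Delta$ with domain $W_{p,\mu}^2(\Om_\rho^\ii)$ has compact resolvent and hence is Fredholm of index zero, identifies $\kk(T')=\R\cdot\mathbf 1$ via the very same integral identity \eqref{X3}, and only then bootstraps back to $h^{2+\delta}$ by Schauder regularity (in fact via the trick of rewriting $\partial_\mu u=g$ as $u+\partial_\mu u=g+u$ with $u\in C^{1+\delta}$ already known). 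Your approach avoids the Sobolev detour and the coretraction machinery at the cost of needing the index stability along the homotopy; the paper's approach makes the duality structure explicit and rests on the softer spectral fact of compact resolvent rather than on homotopy invariance.
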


\begin{proof}
Note that $\mu$ is nowhere tangential and that ${\bf 1}$ is an eigenfunction {of} $\mathbb{A}$. 
Thus, \mbox{$\mathrm{ker}(\mathbb{A})=\R\cdot{\bf 1}$} {follows from} \cite[Thm.12.1]{AmannIsrael} since 
$(-\Delta ,\partial_\mu)$ is regular elliptic. To determine the range of $\mathbb{A}$, fix $p>n/(1-\delta)$ 
and suppose {first that} $\mathbb{A}u=(f,g)$, that is, 
\bqn\label{X1}
-\Delta u=f\quad \text{in}\ \Om_\rho^\ii\, ,\qquad \partial_\mu u=g\quad \text{on}\ \Gamma_\rho\ .
\eqn
Since $\partial_\tau w=(\vec{z}\times \nabla w)\cdot\nu$ and $\mathrm{div}(\vec{z}\times \nabla w)=0$, we have by Gauss' Theorem
\bqn\label{X3}
\int_{\Gamma_\rho}\partial_\tau w\,\rd \sigma =\int_{\Om_\rho^\ii} \mathrm{div}(\vec{z}\times \nabla w)\,\rd x =0\, ,\quad w\in W_p^2(\Om_\rho^\ii)\ .
\eqn
Therefore,
\bqnn
\int_{\Om_\rho^\mathrm{i}} f\,\rd x =
\frac{\vert\Theta_\mathrm{i}\vert^2}{\alpha_\mathrm{i}}\int_{\Gamma_\rho}\left(
-\frac{\alpha_\mathrm{i}}{\vert\Theta_\mathrm{i}\vert^2}\partial_\nu u-\frac{\beta_\mathrm{i}}{\vert\Theta_\mathrm{i}\vert^2}\partial_\tau u\right)
\,\rd \sigma
=\frac{\vert\Theta_\mathrm{i}\vert^2}{\alpha_\mathrm{i}}\int_{\Gamma_\rho}g\,\rd \sigma\ .
\eqnn
For the reverse inclusion we use a Fredholm argument. By \cite[Lem.5.1]{AmannIsrael} there is a coretraction $$M\in\mathcal{L}\big(W_p^{1-1/p}(\Gamma_\rho),W_p^2(\Om_\rho^\ii)\big)$$ for the boundary operator such that $\partial_\mu Mg=g$ for \mbox{$g\in  W_p^{1-1/p}(\Gamma_\rho)$}. Then $$R:=\Delta M\in \mathcal{L}\big(W_p^{1-1/p}(\Gamma_\rho),L_p(\Om_\rho^\ii)\big)\, ,$$ and finding a solution \mbox{$u\in W_p^2(\Om_\rho^\ii)$} to problem \eqref{X1} for a given \mbox{$(f,g)\in L_p(\Om_\rho^\ii)\times W_p^{1-1/p}(\Gamma_\rho)$} is equivalent to finding a solution $v\in W_p^2(\Om_\rho^\ii)$ to 
\bqn\label{X1a}
-\Delta v=f+Rg\quad \text{in}\ \Om_\rho^\ii\, ,\qquad \partial_\mu v=0\quad \text{on}\ \Gamma_\rho\, ,
\eqn
{and} setting $u:=v+Mg$. Let 
$$
W_{p,\mu}^2\big(\Om_\rho^\ii\big):=\left\{v\in W_p^2(\Om_\rho^\ii)\,;\, \partial_\mu v =0\ \text{on}\ \Gamma_\rho\right\}
$$ 
and consider the closed linear operator $T$ on $L_p(\Om_\rho^\ii)$ given by $Tv:=-\Delta v$ for $v\in W_{p,\mu}^2(\Om_\rho^\ii)$. Then, as above, $\mathrm{ker}(T)=\R\cdot {\bf 1}$, and $T$ has compact resolvent. Hence $T$ is a Fredholm operator on $L_p(\Om_\rho^\ii)$ with index zero. Since obviously $W_{p,\mu}^2(\Om_\rho^\ii)$ is dense in $L_p(\Om_\rho^\ii)$, we also obtain for its dual $T'$ that  $\mathrm{ker}(T')=\R\cdot  f'$ for some $f'\in L_{p'}(\Om_\rho^\ii)$, where $p'$ denotes the dual exponent of~$p$. Moreover, $Tv=h$ with $h\in L_p(\Om_\rho^\ii)$ is solvable for $v\in W_{p,\mu}^2(\Om_\rho^\ii)$ if and only if $\langle f',h\rangle=0$. However, since \eqref{X3} ensures
$$
\int_{\Om_\rho^\ii}\Delta v\ \rd x = -\frac{\vert\Theta_\ii\vert^2}{\alpha_\ii}\int_{\Gamma_\rho}\partial_\mu v\ \rd\sigma=0\, ,\quad v\in W_{p,\mu}^2(\Om_\rho^\ii)\, ,
$$
we may take $f'={\bf 1}$, that is, $\mathrm{ker}(T')=\R\cdot {\bf 1}$, and $Tv=f+Rg$ is solvable for $v\in W_{p,\mu}^2(\Om_\rho^\ii)$ with $$(f,g)\in L_p(\Om_\rho^\ii)\times W_p^{1-1/p}(\Gamma_\rho)$$ if and only if
\bqn\label{rg}
0=\int_{\Om_\rho^\ii} (f+Rg)\ \rd x= \int_{\Om_\rho^\ii} f\ \rd x - \frac{\vert\Theta_\ii\vert^2}{\alpha_\ii}\int_{\Gamma_\rho}\partial_\mu g\ \rd\sigma \, ,
\eqn
the last equality being again due to \eqref{X3}. Finally, if $u$ is the solution to \eqref{X1} for a given 
$$(f,g)\in h^\delta(\Om_\rho^\ii)\times h^{1+\delta}(\Gamma_\rho){\subset} 
L_p(\Omega_\rho^\ii)\times W_p^{1-1/p}(\Gamma_\rho)$$ satisfying \eqref{rg}, 
then $u\in W_p^2(\Om_\rho^\ii)\hookrightarrow C^{1+\delta}(\overline{\Om_\rho^\ii})$ 
by the choice of $p$. Using Schauder regularity theory 
{\cite[Thm.6.15]{GilbargTrudinger}} for
$$
-\Delta \bar{u}=f\quad \text{in}\ \Om_\rho^\ii\, ,\qquad \bar{u}+\partial_\mu \bar{u}=g+u\quad \text{on}\ \Gamma_\rho\, ,
$$
then easily gives $u=\bar{u}\in h^{2+\delta}(\Om_\rho^\ii)$. This proves the claimed 
characterization of ${\mathrm{im}}(\mathbb{A})$.
\end{proof}

\begin{proof}[Proof of Proposition~\ref{P12}]
For $\rho\in\mathcal{V}$ fixed, set
$$
P(\rho) h:=\frac{\vert \phi_\rho^*(\nabla N_\rho)\vert}{\vert\Gamma_\rho\vert}
\int_{\Gamma_\rho} \frac{\phi_*^\rho h}{\vert \nabla N_\rho\vert}\, \rd\sigma\, ,
\quad h\in h^{1+\delta}(\sph)\, ,
$$
and define an operator $\mathbb{A}_\ii(\rho): h^{2+\delta}(B^\ii)\rightarrow 
h^\delta(B^\ii)\times h^{1+\delta}(\sph)$ by
$$
\mathbb{A}_\ii(\rho) u:=\big(D(\rho) \mathcal{A}_\ii(\rho)u\,,\, d(\rho)
\mathcal{B}_\ii(\rho) u\big), \quad u\in h^{2+\delta}(B^\ii)\, ,
$$
where $D(\rho):=\vert\mathrm{det}\,\partial_x \phi_\rho\vert$ and $d(\rho):=1+\rho$. 
Then, since $D(\rho)>0$ and $d(\rho)>0$, we readily obtain from Lemma~\ref{LA} 
that $\mathrm{ker}\big(\mathbb{A}_\ii(\rho)\big)=\R\cdot{\bf 1}$. Moreover, 
$(f,g)\in\mathrm{im}\big(\mathbb{A}_\ii(\rho)\big)$ if and only if 
$(\hat{f},\hat{g})\in\mathrm{im}(\mathbb{A})$, where 
$$
\hat{f}:=\phi_*^\rho \frac{f}{D(\rho)}\in h^{\delta}(\Om_\rho^\ii)\, ,\qquad \hat{g}:=\frac{1}{\vert \nabla N_\rho \vert} \phi_*^\rho \frac{g}{d(\rho)}\in h^{1+\delta}(\Gamma_\rho)\ .
$$
{By virtue of \eqref{22} we have that}
\bqn\label{n}
 \vert \nabla N_\rho \vert =\frac{\sqrt{\dot{\rho}+(1+\rho)^2}}{1+\rho}\quad
 \text{on}\quad\sph\, ,
\eqn
{and we infer from Lemma~\ref{LA} that}
$$
\mathrm{im}\big(\mathbb{A}_\ii(\rho)\big)=\left\{(f,g)\in h^\delta(B^\ii)
\times h^{1+\delta}(\sph)\, ;\, \int_{B^\ii} f\,\rd x
=\frac{\vert\Theta_\mathrm{i}\vert^2}{\alpha_\mathrm{i}}\int_{\sph}g\,\rd 
\sigma\right\}=:Y
$$
{which is} independent of $\rho$. Consequently, the map
$$
\mathbb{A}_\ii(\rho): h^{2+\delta}(B^\ii)\big/_{\R\cdot{\bf 1}}\longrightarrow Y
$$
is an isomorphism. Furthermore, 
$
v=\mathcal{N}(\rho)(f,g):= \mathbb{A}_\ii(\rho)^{-1}(f,g)
$
is the unique solution in $h^{2+\delta}(B^\ii)\big/_{\R\cdot{\bf 1}}$ to
$$
D(\rho)\mathcal{A}_\ii(\rho)v=f\quad\text{on}\ B^\ii\, ,\qquad d(\rho)\mathcal{B}_\ii 
(\rho) v=g\quad\text{on}\ \sph
$$
{for each $\rho\in\mathcal{V}$ and $(f,g)\in Y$}.
As $D(\rho)$ and $d(\rho)$ are analytic in $\rho$, we deduce that 
$$
\mathcal{N}:\mathcal{V}\rightarrow \mathcal{L}\left(Y,h^{2+\delta}(B^\ii)\big/_{\R\cdot{\bf 1}}\right)
$$
is analytic. Now, we easily check with the help of \eqref{n} that
$$
\int_{\sph} d(\rho)\big(1-P(\rho)\big)h\, \rd \sigma=0\, ,
$$
whence $\big(0,d(\rho)(1-P(\rho))h\big)\in Y$ for $h\in h^{1+\delta}(\sph)$. Thus $u:=\mathcal{N}(\rho)\big(0,d(\rho)(1-P(\rho))h\big)$ is the unique solution in $h^{2+\delta}(B^\ii)\big/_{\R\cdot{\bf 1}}$ to
$$
\mathcal{A}_\ii(\rho)u=0\quad\text{on}\ B^\ii\, ,\qquad \mathcal{B}_\ii (\rho) u=h-P(\rho)h\quad\text{on}\ \sph\ .
$$
By \eqref{n} we have
$$
P(\rho)h=\frac{\sqrt{\dot{\rho}+(1+\rho)^2}}{(1+\rho)\int_0^{2\pi} \sqrt{\dot{\rho}+(1+\rho)^2}\,\rd\theta} \int_0^{2\pi} h(\theta)\, \big(1+\rho(\theta)\big)\,\rd \theta\, ,
$$
hence $P(\rho)$ depends analytically on $\rho$ and so does $u=u(\rho)$. Given $\rho\in\mathcal{V}$ and \mbox{$h\in h^{1+\delta}(\sph)$}, problem~\eqref{P1a}-\eqref{P1c} thus admits a unique solution $Q_\ii=S(\rho,h)$ and 
$$
[\rho\mapsto S(\rho,\cdot)]:\mathcal{V}\rightarrow \mathcal{L}\big(h^{1+\delta}(\sph),h^{2+\delta}(B^\mathrm{i})\big)
$$ 
is analytic.
This proves Proposition~\ref{P12}.
\end{proof}

\begin{prop}\label{P13}
Given $\rho\in\mathcal{V}$ and $g\in h^{2+\delta}(\sph)$, there is a unique solution $Q_\mathrm{o}=T(\rho,g)\in h^{2+\delta}(B^\mathrm{o})$ to \eqref{P2a}-\eqref{P2b}. Moreover, the map $[\rho\mapsto T(\rho,\cdot)]:\mathcal{V}\rightarrow \mathcal{L}\big(h^{2+\delta}(\sph),h^{2+\delta}(B^\mathrm{o})\big)$ is analytic.
\end{prop}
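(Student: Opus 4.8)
The plan is to reduce the mixed Dirichlet--Neumann problem \eqref{P2a}-\eqref{P2c} on the fixed annulus $B^\oo$ to a standard elliptic solvability statement exactly as in the proof of Proposition~\ref{P12}, but now the situation is simpler because the boundary conditions are (after multiplication by a positive weight) a genuine Dirichlet condition on $\sph$ and a regular oblique derivative condition on $R\sph$, so the associated operator is an isomorphism rather than merely Fredholm. Concretely, for fixed $\rho\in\mathcal{V}$ I would introduce, with $D(\rho):=|\mathrm{det}\,\partial_x\phi_\rho|>0$, the operator
\begin{align*}
\mathbb{A}_\oo(\rho)\colon h^{2+\delta}(B^\oo)&\longrightarrow h^\delta(B^\oo)\times h^{2+\delta}(\sph)\times h^{1+\delta}(R\sph)\,,\\
u&\longmapsto \big(D(\rho)\mathcal{A}_\oo(\rho)u\,,\,u|_{\sph}\,,\,\mathcal{B}_\oo u\big)\,,
\end{align*}
and the claim reduces to showing this is a bounded isomorphism depending analytically on $\rho$. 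Boundedness and analyticity follow from Lemma~\ref{L8}(i) together with the analyticity of $\rho\mapsto D(\rho)$ and of the trace operators (the latter being $\rho$-independent), just as in Proposition~\ref{P12}.

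For the isomorphism property I would first establish it for the model operator, namely for the flat case. By the pull-back/push-forward equivalence (using $\phi_\rho^*$, $\phi_*^\rho$ as in the proof of Proposition~\ref{P12}), $\mathbb{A}_\oo(\rho)u=(f,g,0)$ is equivalent to solving
$$
-\Delta v = \phi_*^\rho\!\big(f/D(\rho)\big)\ \text{in}\ \Om_\rho^\oo\,,\qquad v=\phi_*^\rho g\ \text{on}\ \Gamma_\rho\,,\qquad \alpha_\oo\partial_\nu v+\beta_\oo\partial_\tau v=0\ \text{on}\ R\sph\,,
$$
for $v=\phi_*^\rho u$. On the annular domain $\Om_\rho^\oo$ the pair $(-\Delta,\{\,\cdot\,|_{\Gamma_\rho},\ \alpha_\oo\partial_\nu+\beta_\oo\partial_\tau\,\})$ is a regular elliptic boundary value problem in the sense of Agmon--Douglis--Nirenberg, since $\alpha_\oo\nu+\beta_\oo\tau$ is nowhere tangential to $R\sph$ (because $\alpha_\oo>0$), so the Lopatinskii--Shapiro condition holds. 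Unlike the pure Neumann-type situation of Lemma~\ref{LA}, the presence of the genuine Dirichlet part on $\Gamma_\rho$ makes the kernel trivial: a harmonic $v$ with $v=0$ on $\Gamma_\rho$ and $\alpha_\oo\partial_\nu v+\beta_\oo\partial_\tau v=0$ on $R\sph$ must vanish, which I would see by testing with $v$ and integrating by parts, using $\int_{R\sph}v\,\partial_\tau v\,\rd\sigma=\tfrac12\int_{R\sph}\partial_\tau(v^2)\,\rd\sigma=0$ to kill the tangential contribution, so that $\int_{\Om_\rho^\oo}|\nabla v|^2\,\rd x=0$; then $v$ is constant, hence zero by the Dirichlet condition. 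A Fredholm/index argument (the operator has index zero, e.g.\ by continuously deforming $\beta_\oo$ to $0$ to reach the self-adjoint mixed Dirichlet--Neumann Laplacian, whose invertibility is classical) together with the trivial kernel then yields that $\mathbb{A}_\oo(\rho)$ is an isomorphism onto $h^\delta(B^\oo)\times h^{2+\delta}(\sph)\times h^{1+\delta}(R\sph)$. Schauder regularity (\cite[Thm.~6.15]{GilbargTrudinger}, with a localization near $R\sph$ for the oblique part) upgrades the $W_p^2$-solution to $h^{2+\delta}$, exactly as in the proof of Lemma~\ref{LA}. Setting $T(\rho,g):=\mathbb{A}_\oo(\rho)^{-1}(0,g,0)$ then gives the unique solution of \eqref{P2a}-\eqref{P2c}, and analyticity of $\rho\mapsto T(\rho,\cdot)$ follows from that of $\rho\mapsto\mathbb{A}_\oo(\rho)$ and the fact that inversion is analytic on the open set of isomorphisms.

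The main obstacle I anticipate is not the solvability itself but making the invertibility of $\mathbb{A}_\oo(\rho)$ genuinely uniform and analytic in $\rho$ over all of $\mathcal{V}$: one must check that the regular-ellipticity constants and the Schauder constants stay bounded, which is where the smooth dependence of $\phi_\rho$, its Jacobian, and the transformed coefficients on $\rho\in\mathcal{V}\subset h^{4+\delta}(\sph)$ is used, again referring to the estimates in \cite[$\S$3.2]{EscherEhrnstroemMatioc}. A minor secondary point worth recording for later use is the identity
$$
\int_{\Gamma_\rho}\frac{\phi_*^\rho\partial_t\rho}{|\nabla N_\rho|}\,\rd\sigma
=-\int_{R\sph}\Big(\tfrac{1}{\alpha_\oo}\mathcal{B}_\oo Q_\oo-\tfrac{\beta_\oo}{\alpha_\oo}\partial_\tau Q_\oo\Big)\rd\sigma
=0\,,
$$
obtained by applying Gauss' theorem to $\vec{v}_\oo$ on $\Om_\rho^\oo$ together with \eqref{X3} and the boundary condition \eqref{P2c}; this is the justification of the remark following \eqref{P3} and will be referenced there as \eqref{just}.
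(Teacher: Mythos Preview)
Your proposal is correct and takes essentially the same approach as the paper: define $\mathbb{A}_\oo(\rho)u=(\mathcal{A}_\oo(\rho)u,\,u|_{\sph},\,\mathcal{B}_\oo u)$, show injectivity by pulling back to $\Om_\rho^\oo$ and integrating by parts (using that the tangential contribution $\int\partial_\tau(\bar u^2)\,\rd\sigma$ vanishes), obtain surjectivity via a Fredholm argument plus Schauder regularity, and deduce analyticity of $T(\rho,\cdot)=\mathbb{A}_\oo(\rho)^{-1}(0,\cdot,0)$ from Lemma~\ref{L8}. The only differences are cosmetic: the paper omits your superfluous factor $D(\rho)$, and for surjectivity it invokes \cite{AmannIsrael} to get $\lambda-\Delta$ invertible for large $\lambda$ and then uses the injectivity computation to reach $\lambda=0$, rather than your homotopy $\beta_\oo\to 0$; the integral identity you append is precisely \eqref{just}, but in the paper it is derived later, after Theorem~\ref{ex}, not inside this proposition.
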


\begin{proof}
Let $\rho\in\mathcal{V}$ and define an operator $\mathbb{A}_\oo(\rho): h^{2+\delta}(B^\oo)\rightarrow h^{\delta}(B^\oo)\times h^{2+\delta}(\sph)\times h^{1+\delta}(R\,\sph)$ by
$$
\mathbb{A}_\oo(\rho) u:=\big(\mathcal{A}_\oo(\rho)u\,,\, {u\vert_{\mathbb{S}^1}}\, ,\,\mathcal{B}_\oo u\big), \quad u\in h^{2+\delta}(B^\oo)\ .
$$
Then $\mathbb{A}_\oo(\rho)$ is invertible. Indeed, to check its injectivity, let $\mathbb{A}_\oo(\rho)u=0$. Then, for $\bar{u}:=\phi_*^\rho u$,
\begin{align*}
\Delta \bar{u} =0  &\qquad \text{in}\quad \Om_\rho^\oo\, ,\\
\bar{u} = 0 &\qquad\text{on}\quad \Gamma_\rho\, , \\
\alpha_\oo\partial_\nu \bar{u}+\beta_\oo\partial_\tau \bar{u} = 0&\qquad
\text{on}\quad R\,\sph\, ,
\end{align*}
{and Green's formula yields}
\bqnn
\begin{split}
0&=\int_{\Om_\rho^\oo} \bar{u} \Delta \bar{u}\,\rd x=-\int_{\Om_\rho^\oo}\vert \nabla \bar{u}\vert^2\,\rd x +\int_{\Gamma_\rho \cup R\sph} \bar{u} \partial_\nu \bar{u}\,\rd\sigma
= -\int_{\Om_\rho^\oo}\vert \nabla \bar{u}\vert^2\,\rd x -\frac{\beta_\oo}{\alpha_\oo}\int_{\Gamma_\rho \cup R\sph} \frac{1}{2}\partial_\tau \bar{u}^2 \,\rd\sigma\\
&= -\int_{\Om_\rho^\oo}\vert \nabla \bar{u}\vert^2\,\rd x -\frac{\beta_\oo}{2\alpha_\oo}\int_{\Om_\rho^\oo} \mathrm{div}\left(\begin{matrix}-\partial_2 {\bar{u}^2}\\ \partial_1 {\bar{u}^2}\end{matrix}\right)\,\rd x = -\int_{\Om_\rho^\oo}\vert \nabla \bar{u}\vert^2\,\rd x\ .
\end{split}
\eqnn
This entails $\bar{u}=0$, i.e. $u=0$. To determine the range of $\mathbb{A}_\oo(\rho)$ consider again the transformed problem
\begin{align}
\Delta \bar{u} =\bar{f}  &\qquad\text{in}\quad \Om_\rho^\oo\, ,\label{l}\\
\bar{u} = \bar{g}&\qquad
\text{on}\quad \Gamma_\rho\, , \label{ll}\\
\alpha_\oo\partial_\nu \bar{u}+\beta_\oo\partial_\tau \bar{u} = \bar{h} &\qquad \text{on}\quad R\,\sph\ . \label{lll}
\end{align}
By means of a coretraction as in the proof of Lemma~\ref{LA} we may assume $\bar{g}=0$ and $\bar{h}=0$. However, by \cite{AmannIsrael}, 
$$
\lambda-\Delta: \big\{v\in W_2^2(\Om_\rho^\oo)\,;\, v=0\ \text{on}\ \Gamma_\rho\, ,
\, (\alpha_\oo\partial_\nu +\beta_\oo\partial_\tau ) v=0\ \text{on}\ R\,\sph\big\}
\longrightarrow L_2(\Om_\rho^\oo)
$$
is invertible for $\lambda\ge 0$ sufficiently large, and the argument above to 
prove injectivity implies that we actually may take $\lambda=0$.
Thus, \eqref{l}-\eqref{lll} is uniquely solvable for each 
$\bar{f}\in L_2(\Om_\rho^\oo)$, $\bar{g}\in H^{3/2}(\Gamma_\rho)$, and 
$\bar{h}\in H^{1/2}(R\,\sph)$. Schauder regularity theory ensures 
$\bar{u}\in h^{2+\delta}(\Om_\rho^\oo)$ provided $\bar{f}\in h^\delta(\Om_\rho^\oo)$, 
$\bar{g}\in h^{2+\delta}(\Gamma_\rho)$, and $\bar{h}\in h^{1+\delta}(R\,\sph)$. 
Consequently, $\mathbb{A}_\oo(\rho)$ is surjective and thus invertible, and the 
claim on the analyticity of $T(\rho,\cdot)=\mathbb{A}_\oo(\rho)^{-1}(0,\cdot,0)$ 
follows from Lemma~\ref{L8}.
\end{proof}

Next, we state a multiplier result that we shall use {later}. The proof is 
a straightforward modification of the case $r=s$ in \cite{ArendtBu}.

\begin{prop}\label{AB} 
Let $r,s\in (0,\infty)\setminus\N$ and $(M_n)_{n\in\Z}$ be a sequence in $\C$ satisfying
\begin{itemize}
\item[(i)] $\sup_{n\in\Z\setminus\{0\}}\vert n\vert^{r-s}\vert M_n\vert <\infty\, ,$
\item[(ii)] $\sup_{n\in\Z\setminus\{0\}}\vert n\vert^{r-s+1}\vert M_{n+1}-M_n\vert <\infty\, ,$
\item[(iii)] $\sup_{n\in\Z\setminus\{0\}}\vert n\vert^{r-s+2}\vert M_{n+2}-2M_{n+1}+M_n\vert <\infty\, .$
\end{itemize}
Then the mapping
$$
\sum_{n\in\Z} \hat{h}_n e^{in\theta}\longmapsto \sum_{n\in\Z}  \hat{h}_n M_n  e^{in\theta}
$$
belongs to $\mathcal{L}\big(C^s(\sph),C^r(\sph)\big)$.
\end{prop}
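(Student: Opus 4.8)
The plan is to run the dyadic (Littlewood--Paley) argument used for the case $r=s$ in \cite{ArendtBu}, carrying the extra homogeneity factor $|n|^{r-s}$ supplied by (i)--(iii) through all the estimates. Fix a smooth dyadic partition of unity $(\psi_j)_{j\ge 0}$ on $\Z$, with $\psi_0$ supported in $\{|n|\le 2\}$, $\psi_j$ supported in $\{2^{j-1}\le|n|\le 2^{j+1}\}$ for $j\ge 1$, $\sum_{j\ge0}\psi_j\equiv 1$, and $\psi_j$ obtained from one fixed profile by dyadic dilation, so that $|\psi_j(n+1)-\psi_j(n)|\le C\,2^{-j}$ and $|\psi_j(n+2)-2\psi_j(n+1)+\psi_j(n)|\le C\,2^{-2j}$ uniformly in $n$ and $j$. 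Write $\Delta_j h:=\sum_{n}\psi_j(n)\hat h_n e^{in\theta}$ and let $T_M$ denote the operator in the statement. Since $r,s\notin\N$, each of $C^r(\sph)$ and $C^s(\sph)$ agrees, with equivalent norm, with the periodic H\"older--Zygmund space $B^r_{\infty,\infty}(\sph)$, resp. $B^s_{\infty,\infty}(\sph)$; in particular $\|h\|_{C^s}\sim\sup_{j\ge0}2^{js}\|\Delta_j h\|_\infty$, and similarly with $r$. It therefore suffices to show $2^{jr}\|\Delta_j T_M h\|_\infty\le C\|h\|_{C^s}$ uniformly in $j\ge0$.

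First I would localize: $\Delta_j T_M h=T_{M^{(j)}}\widetilde\Delta_j h$, where $M^{(j)}_n:=\psi_j(n)M_n$ and $\widetilde\Delta_j:=\Delta_{j-1}+\Delta_j+\Delta_{j+1}$ (with $\Delta_{-1}:=0$), because $\psi_{j-1}+\psi_j+\psi_{j+1}\equiv1$ on $\mathrm{supp}\,\psi_j$. The Besov characterization gives $\|\widetilde\Delta_j h\|_\infty\le C\,2^{-js}\|h\|_{C^s}$. Since a Fourier multiplier on $L^\infty(\sph)$ is convolution by its kernel, with operator norm equal to the $L^1$-norm of that kernel, the proof reduces to the single estimate
\[
\|K_j\|_{L^1(\sph)}\le C\,2^{j(s-r)},\qquad K_j(\theta):=\sum_{n\in\Z}M^{(j)}_n e^{in\theta}.
\]
Indeed, granting this, $2^{jr}\|\Delta_j T_M h\|_\infty\le 2^{jr}\|K_j\|_{L^1}\|\widetilde\Delta_j h\|_\infty\le C\|h\|_{C^s}$.

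For the kernel estimate I would split $\int_{\sph}|K_j|$ at $|\theta|\sim 2^{-j}$ and use Cauchy--Schwarz on each piece. On $|\theta|\le 2^{-j}$ one gets $C\,2^{-j/2}\|K_j\|_{L^2}$, and Parseval gives $\|K_j\|_{L^2}^2=2\pi\sum_n|M^{(j)}_n|^2\le C\,2^{j}\,2^{2j(s-r)}$ by (i) together with the fact that $M^{(j)}$ lives on $\sim 2^j$ integers of modulus $\sim 2^j$; this contributes $C\,2^{j(s-r)}$. On $2^{-j}\le|\theta|\le\pi$, where $|e^{i\theta}-1|^2=2-2\cos\theta$ is comparable to $\theta^2$, one bounds the piece by $C\,2^{3j/2}\|(e^{i\theta}-1)^2K_j\|_{L^2}$; since $(e^{i\theta}-1)^2K_j$ has $n$-th Fourier coefficient $M^{(j)}_{n+2}-2M^{(j)}_{n+1}+M^{(j)}_n$, a discrete Leibniz expansion together with (i)--(iii) and the bounds on the differences of $\psi_j$ above yields $|M^{(j)}_{n+2}-2M^{(j)}_{n+1}+M^{(j)}_n|\le C\,2^{j(s-r-2)}$, hence $\|(e^{i\theta}-1)^2K_j\|_{L^2}\le C\,2^{j/2}2^{j(s-r-2)}$ and this piece also contributes $C\,2^{j(s-r)}$. (Condition (ii) additionally guarantees that $M_0$, which is not directly covered by (i), is finite, so the block $j=0$ --- involving only $M_0,M_{\pm1}$ --- is harmless.) Adding the two pieces proves the claimed kernel bound, hence the proposition.

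The argument is essentially a transcription of \cite{ArendtBu}; the only thing to watch is the bookkeeping of the exponent. On the $j$-th dyadic block the symbol $M^{(j)}$ has size $2^{j(s-r)}$ rather than $O(1)$, and its first and second differences are of order $2^{j(s-r-1)}$ and $2^{j(s-r-2)}$ --- precisely what (i)--(iii) encode --- which is exactly what turns $C^s$ into $C^r$. There is no real obstacle beyond this; the one slightly delicate point is the opening reduction, which relies on the identification of the H\"older norm with the dyadic Besov norm and is valid for non-integer exponents only, explaining the hypothesis $r,s\in(0,\infty)\setminus\N$.
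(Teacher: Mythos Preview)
Your argument is correct and is precisely the ``straightforward modification of the case $r=s$ in \cite{ArendtBu}'' that the paper invokes in lieu of a proof: you carry the extra factor $|n|^{r-s}$ through the dyadic kernel estimate, which converts the $L^1$ bound on each block from $O(1)$ to $O(2^{j(s-r)})$ and hence maps $C^s$ to $C^r$. The paper gives no further details, so there is nothing to compare beyond this.
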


We now focus on the evolution equation \eqref{P3}, which we may rewrite, using 
linearity of the solution operators $S$ and $T$, as
\bqn\label{P3a}
\big(1-\mathcal{R}(\rho)\big)\partial_t\rho=\mathcal{B}_\oo(\rho) T\big(\rho,-\mathcal{K}(\rho)\big)
\eqn
with
$$
\mathcal{R}(\rho)z:= \mathcal{B}_\oo(\rho) T\big(\rho,S(\rho,z)\big)\, ,\quad z\in h^{1+\delta}(\sph)\, ,\quad \rho\in\mathcal{V}\ .
$$
To solve \eqref{P3a} for $\partial_t\rho$, we use the following

\begin{lem}\label{L14}
There is an open zero neighborhood $\mathcal{W}\subset\mathcal{V}$ in $h^{4+\delta}(\sph)$ such that \mbox{$1-\mathcal{R}(\rho)$} is an isomorphism on $h^{1+\delta}(\sph)$ for each $\rho\in\mathcal{W}$.
\end{lem}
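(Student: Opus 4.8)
The plan is to show that $\mathcal{R}(0)=0$, so that $1-\mathcal{R}(\rho)$ is a small perturbation of the identity for $\rho$ near zero, and then to invoke a standard Neumann series argument. First I would record that, by Propositions~\ref{P12} and \ref{P13} and Lemma~\ref{L8}(ii), the map $[\rho\mapsto\mathcal{R}(\rho)]$ is analytic from $\mathcal{V}$ into $\mathcal{L}\big(h^{1+\delta}(\sph)\big)$, being a composition of the analytic maps $\rho\mapsto S(\rho,\cdot)$, $\rho\mapsto T(\rho,\cdot)$, and $\rho\mapsto\mathcal{B}_\oo(\rho)$. In particular it is continuous at $\rho=0$.

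Next I would compute $\mathcal{R}(0)$ explicitly. For $\rho=0$ the Hanzawa transform is the identity, so $\mathcal{A}_j(0)=\Delta$, the domains are $B^\ii=\B(0,1)$ and $B^\oo=\B(0,R)\setminus\overline{\B(0,1)}$, and $\phi_0^*\nabla N_0(y)=y$ for $y\in\sph$, whence $\mathcal{B}_\ii(0)u=\frac{-1}{|\Theta_\ii|^2}(\alpha_\ii\partial_r u+\beta_\ii\partial_\tau u)$ on $\sph$ and similarly $\mathcal{B}_\oo=\alpha_\oo\partial_r+\beta_\oo\partial_\tau$ on $R\sph$. Given $z\in h^{1+\delta}(\sph)$, let $w:=S(0,z)$, so $\Delta w=0$ in $B^\ii$, $\mathcal{B}_\ii(0)w=z-P(0)z$ on $\sph$, and $\int_\sph w=\int_\sph z$. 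Then $\mathcal{R}(0)z=\mathcal{B}_\oo T\big(0,S(0,z)\big)=\mathcal{B}_\oo v$ evaluated on $R\sph$, where $v=T(0,w)$ solves $\Delta v=0$ in $B^\oo$, $v=w$ on $\sph$, $\mathcal{B}_\oo v=0$ on $R\sph$. But the last boundary condition says precisely $(\alpha_\oo\partial_r+\beta_\oo\partial_\tau)v=0$ on $R\sph$, which is by definition $\mathcal{B}_\oo v=0$ there. Hence $\mathcal{R}(0)z=0$ for every $z$, i.e. $\mathcal{R}(0)=0$. (The key structural point is that the outer solution operator $T$ already builds in the homogeneous $\mathcal{B}_\oo$-condition on $R\sph$ that $\mathcal{R}$ then reads off.)

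With $\mathcal{R}(0)=0$ in hand, continuity of $\rho\mapsto\mathcal{R}(\rho)$ at $0$ gives an open zero neighborhood $\mathcal{W}\subset\mathcal{V}$ in $h^{4+\delta}(\sph)$ on which $\|\mathcal{R}(\rho)\|_{\mathcal{L}(h^{1+\delta}(\sph))}<1$. The Neumann series $\sum_{k\ge 0}\mathcal{R}(\rho)^k$ then converges in $\mathcal{L}\big(h^{1+\delta}(\sph)\big)$ and provides the inverse of $1-\mathcal{R}(\rho)$, so $1-\mathcal{R}(\rho)$ is an isomorphism on $h^{1+\delta}(\sph)$ for each $\rho\in\mathcal{W}$, which is the assertion.

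I do not expect a genuine obstacle here; the one point requiring care is the identification $\mathcal{R}(0)=0$, specifically checking that at $\rho=0$ the operator $\mathcal{B}_\oo(\rho)$ appearing in $\mathcal{R}$ (on the cell boundary $R\sph$) really coincides with the boundary operator that $T$ annihilates, and that the mean-value normalizations in the definitions of $S$ and $P(\rho)$ do not spoil this — both of which follow directly from the definitions once $\phi_0=\mathrm{id}$ is used. Everything else is the routine analyticity bookkeeping already supplied by Lemma~\ref{L8} and Propositions~\ref{P12}–\ref{P13}, together with the elementary Neumann series.
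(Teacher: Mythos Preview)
Your argument contains a genuine error: you have conflated two different operators with similar names. The paper uses $\mathcal{B}_\oo$ (without a $\rho$-argument) for the fixed boundary operator $\alpha_\oo\partial_\nu+\beta_\oo\partial_\tau$ on the outer cell wall $R\sph$, and $\mathcal{B}_\oo(\rho)$ (with a $\rho$-argument) for the transformed oblique-derivative operator on the \emph{interface} $\sph$; see \eqref{21c}--\eqref{21d}. The operator appearing in the definition $\mathcal{R}(\rho)z=\mathcal{B}_\oo(\rho)T\big(\rho,S(\rho,z)\big)$ is the latter, acting on $\sph$. Thus $\mathcal{R}(0)z$ is \emph{not} the trace of $\mathcal{B}_\oo v$ on $R\sph$ (which the construction of $T$ indeed forces to vanish), but rather the oblique derivative of $v=T(0,S(0,z))$ on the inner circle $\sph$. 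There is no reason for this to vanish, and in fact it does not: the explicit Fourier computation in the paper gives
\[
\mathcal{R}(0)h=\sum_{n\in\Z\setminus\{0\}}\hat h_n\,l_n\,e^{in\theta},\qquad
l_n=\frac{(1-R^{2n})\,|\Theta_\ii|^2}{(\mathrm{sign}(n)\alpha_\ii-i\beta_\ii)\,(\Theta_\oo+R^{2n}\bar\Theta_\oo)}\neq 0.
\]
So $\mathcal{R}(0)\neq 0$, and your Neumann-series shortcut is not available.

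What is actually needed is a direct proof that $1-\mathcal{R}(0)$ is invertible on $h^{1+\delta}(\sph)$. The paper does this by checking that $l_n\neq 1$ for every $n\in\Z\setminus\{0\}$ (using $\alpha_j>0$) and that the limits $\lim_{n\to\pm\infty}l_n\neq 1$, so that $M_n:=(1-l_n)^{-1}$ is a bounded sequence; it then verifies the first and second difference bounds required by the Fourier multiplier result of Proposition~\ref{AB} to conclude that the multiplier $(M_n)$ defines a bounded operator on $C^{1+\delta}(\sph)$, and finally passes to $h^{1+\delta}(\sph)$ via the dense embedding of $H^s(\sph)$. Once invertibility at $\rho=0$ is established, the analytic dependence you correctly invoke from Lemma~\ref{L8} and Propositions~\ref{P12}--\ref{P13} gives the neighborhood $\mathcal{W}$ by perturbation. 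The missing ingredient in your proposal is precisely this nontrivial multiplier analysis; the structural cancellation you hoped for does not occur because $\mathcal{B}_\oo(\rho)$ and $\mathcal{B}_\oo$ live on different boundary components.
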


\begin{proof}
By the smooth dependence on $\rho$ stated in Lemma~\ref{L10}, Proposition~\ref{P12}, and Proposition~\ref{P13}, it suffices to prove that \mbox{$1-\mathcal{R}(0)\in \mathcal{L}\big(h^{1+\delta}(\sph)\big)$} is invertible. {To do this} we {derive} 
its Fourier expansion in polar coordinates $(r,\theta)$. To compute $Q_\ii=S(0,h)$ we 
first note that, for $\rho=0$, problem \eqref{P1a}-\eqref{P1c} {becomes}
\begin{align*}
\left(\frac{1}{r}\partial_r(r\partial_r)+\frac{1}{r^2}\partial_\theta^2\right) Q_\ii&=0  &\text{in}&\quad [r<1]\, ,\\
\frac{-1}{\vert\Theta_\ii\vert} \left(\alpha_\ii\partial_r  -\beta_\ii\partial_\theta\right)  Q_\ii&=h- \frac{1}{2\pi}\int_0^{2\pi} h(\theta)\, \rd\theta
&\text{on}&\quad [r=1]\, , \\
\int_{0}^{2\pi}Q_\ii(1,\theta)\,\rd \theta &=\int_{0}^{2\pi} h(\theta)\,\rd \theta\, ,
\end{align*}
which, for a given $h\in h^{1+\delta}(\sph)$ with expansion
\bqn\label{h}
h(\theta)=\sum_{n\in\Z} \hat{h}_n e^{in\theta}\, ,
\eqn
has the unique solution 
\bqn\label{Qi0}
Q_\ii(r,\theta)= \hat{h}_0+\sum_{n\in\Z\setminus\{0\}} \hat{h}_n \frac{\vert\Theta_\ii\vert^2}{-\vert n\vert \alpha_\ii+in\beta_\ii}  r^{\vert n\vert} e^{in\theta}\ .
\eqn
Similarly, for $\rho=0$, problem \eqref{P2a}-\eqref{P2b} reads as
\begin{align*}
\left(\frac{1}{r}\partial_r(r\partial_r)+\frac{1}{r^2}\partial_\theta^2\right) Q_\oo&=0  &\text{in}&\quad [1<r<R]\, ,\\
Q_\oo&=g
&\text{on}&\quad [r=1]\, ,\\
\left(\alpha_\oo\partial_r -\frac{\beta_\oo}{R}\partial_\theta\right) Q_\oo&=0\, &\text{on}&\quad [r=R]\, ,
\end{align*}
and for $g\in h^{2+\delta}(\sph)$ with expansion
$$
g(\theta)=\sum_{n\in\Z} \hat{g}_n e^{in\theta}\, ,
$$
the unique solution $Q_\oo=T(0,g)$ is given by
\bqn\label{Qo0}
Q_\oo(r,\theta)= \hat{g}_0+\sum_{n\in\Z\setminus\{0\}} \hat{g}_n \left(\frac{\Theta_\oo}{\Theta_\oo+R^{2n}\bar{\Theta}_\oo}r^n+\frac{\bar{\Theta}_\oo}{\bar{\Theta}_\oo+R^{-2n}\Theta_\oo}r^{-n}\right) e^{in\theta}\ .
\eqn
Therefore, given $h\in h^{1+\delta}(\sph)$ with expansion \eqref{h}, we have
$$
\mathcal{R}(0)h=\frac{-1}{\vert\Theta_\oo\vert}\left(\alpha_\oo\partial_r-\beta_\oo\partial_\theta\right) T\big(0,S(0,h)\big)=\sum_{n\in\Z\setminus\{0\}} \hat{h}_n\, l_n e^{in\theta}\, ,
$$
where
\bqn\label{36A}
l_n:=\frac{\big(1-R^{2n}\big)\vert\Theta_\ii\vert^2}{\big(\mathrm{sign}(n)\alpha_\ii-i\beta_\ii\big)\big(\Theta_\oo+R^{2n}\bar{\Theta}_\oo\big)}\, ,\quad n\in\Z\setminus\{0\} \ .
\eqn
We next {use} Proposition~\ref{AB} with $M_n:=(1-l_n)^{-1}$ to check that 
\mbox{$1-\mathcal{R}(0)$} is invertible on $C^{1+\delta}(\sph)$. Note that $l_n\not= 1$ 
for each $n\in\Z\setminus\{0\}$ since $\alpha_j >0$, $j=\ii , \oo$. Also,
\bqn\label{xx}
\lim_{n\rightarrow\infty}l_n=-\frac{\Theta_\ii \Theta_\oo}{\vert \Theta_\oo\vert^2}\not= 1\, ,\qquad
\lim_{n\rightarrow -\infty}l_n=-\frac{\bar{\Theta}_\ii \bar{\Theta}_\oo}{\vert \Theta_\oo\vert^2}\not= 1
\eqn
so that 
\bqn\label{39}
\sup_{n\in\Z\setminus\{0\}} \vert M_n\vert <\infty\ .
\eqn
Next,
$$
M_{n+1}-M_n=\frac{l_{n+1}-l_n}{(1-l_{n+1})(1-l_n)}\, ,
$$
where, for $n\ge 1$,
$$
l_{n+1}-l_n=\frac{O(R^{2n+2})}{\bar{\Theta}_\oo^2 R^{4n+2}+O(R^{2n+2})}\, ,\quad n\ge 1\, ,\qquad
l_{n+1}-l_n=\frac{O(R^{2n})}{\bar{\Theta}_\oo^2 +O(R^{2n})}\, ,\quad n\le -1\ .
$$
Since $nR^{-2n}\rightarrow 0$ as $n\rightarrow \infty$ and $n R^{2n}\rightarrow 0$ as $n\rightarrow -\infty$, it follows from \eqref{39} that
\bqnn
\sup_{n\in\Z\setminus\{0\}} \vert n\vert \, \vert M_{n+1}- M_n\vert <\infty\ .
\eqnn
Finally, as above, we obtain from  \eqref{xx} and \eqref{39} {that}
\bqnn
\begin{split}
n^2\,\big\vert M_{n+2}-2M_{n+1}&+M_n\big\vert =n^2\left\vert\frac{l_{n+2}-2l_{n+1}+l_{n}+l_{n}(l_{n+1}-l_{n+2})+l_{n+2}(l_{n+1}-l_{n})}{(1-l_{n+2})(1-l_{n+1})(1-l_{n})}\right\vert\\
&\le (\sup \vert M_n\vert )^3\, \left[(1+\big\vert l_{n}\big\vert)\, n^2\, \big\vert l_{n+2}-l_{n+1}\big\vert +(1+\big\vert l_{n+2}\big\vert)\, n^2\, \big\vert l_{n+1}-l_{n}\big\vert\right]\\
&\le c \sup_n \left( n^2\,\big\vert l_{n+1}-l_{n}\big\vert\right) <\infty\ .
\end{split}
\eqnn
Consequently, Lemma~\ref{AB} implies $\big(1-\mathcal{R}(0)\big)^{-1}\in\mathcal{L}\big(C^{1+\delta}(\sph)\big)$.
But then $\big(1-\mathcal{R}(0)\big)^{-1}$ is also a bounded operator on
$$
H^{s}(\sph) =\left\{h\in L_2(\sph)\,;\, \| h\|_{H^{s}}:=\sum_{n\in\Z} (1+n^2)^{s} \vert \hat{h}_n\vert^2 <\infty \right\}
$$
for each $s>0$ due to \eqref{39}. Recalling that $H^{s}(\sph)$ {is densely 
embedded} in $h^{1+\delta}(\sph)$ provided $s>5/2$, we deduce that 
$\big(1-\mathcal{R}(0)\big)^{-1}\in\mathcal{L}\big(h^{1+\delta}(\sph)\big)$.
\end{proof}

According to Lemma~\ref{L14} and \eqref{P3a}, we are led to look for solutions 
$$
\rho\in C\big([0,T],\mathcal{W}\big)\cap C^1\big([0,T],h^{1+\delta}(\sph)\big)
$$
to the fully nonlinear equation
\bqn\label{41}
\partial_t\rho =\big(1-\mathcal{R}(\rho)\big)^{-1} \mathcal{B}_\oo (\rho) T\big(\rho\,,\,-\mathcal{K}(\rho)\big)=: F(\rho)\, ,\quad t\in (0,T]\ .
\eqn
{The following proposition is instrumental in the computation of the 
linearization in zero of this evolution equation.}

\begin{prop}\label{L15}
$F\in C^\infty\big(\mathcal{W},h^{1+\delta}(\sph)\big)$ and, for $h\in h^{4+\delta}(\sph)$, 
$$
\partial F(0)[h]= \big(1-\mathcal{R}(0)\big)^{-1} \mathcal{B}_\mathrm{o} (0) T\big(0\,,\,(2(\gamma_\mathrm{i}-\gamma_\mathrm{o})+\sigma) h+\sigma\ddot{h}\big)\ .
$$
In particular, if $h\in h^{4+\delta}(\sph)$ with $h(\theta)=\sum_{n\in\Z} \hat{h}_n e^{in\theta}$, then
$$
\partial F(0)[h](\theta)=\sum_{n\in\Z\setminus\{0\}}  \hat{h}_n q_n e^{in\theta}
$$
where, for $n\in\Z\setminus\{0\}$,
\bqn\label{44}
q_n:=\frac{A_n+{i\,}\mathrm{sign}(n) B}{A_n^2+B^2}\mu(n)
\eqn
with
$$
A_n:=\mathrm{sign}(n)\frac{R^{2n}+1}{R^{2n}-1}\alpha_\mathrm{o}+\alpha_\mathrm{i}\, ,\quad B:=\beta_\mathrm{o}-\beta_\mathrm{i}\, ,\quad  \mu(n):=\vert n\vert \big(\sigma-2(\gamma_\mathrm{o}-\gamma_\mathrm{i})-\sigma n^2\big)\ .
$$
\end{prop}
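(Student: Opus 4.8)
The plan is to prove Proposition~\ref{L15} in two stages: first the smoothness of $F$ and the identification of $\partial F(0)$ as a composition of linearizations, and then the explicit Fourier computation leading to \eqref{44}.

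For the first stage, $F = \big(1-\mathcal{R}(\rho)\big)^{-1} \mathcal{B}_\oo(\rho) T\big(\rho,-\mathcal{K}(\rho)\big)$ is a composition of maps each of which has already been shown to depend analytically (hence smoothly) on $\rho$: $\mathcal{K}$ by Lemma~\ref{L10}, $T(\rho,\cdot)$ by Proposition~\ref{P13}, $\mathcal{B}_\oo(\rho)$ by Lemma~\ref{L8}(ii) (note that $\mathcal{B}_\oo$ is the differential boundary operator, which acts analytically in $\rho$ after composing with the Hanzawa diffeomorphism — here one must be mildly careful, since $\mathcal{B}_\oo$ as defined acts on $R\sph$, but in \eqref{P3} and \eqref{P3a} it is understood via the same transformation mechanism as $\mathcal{B}_j(\rho)$, restricted to $\sph$), and $\big(1-\mathcal{R}(\rho)\big)^{-1}$ by Lemma~\ref{L14} together with the fact that inversion is analytic on the open set of invertible operators. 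This yields $F\in C^\infty\big(\mathcal{W},h^{1+\delta}(\sph)\big)$. For the linearization, the key observation is that $\mathcal{R}(0)$ applied to a function with vanishing mean produces a function with vanishing mean (this is visible from \eqref{36A}, the multipliers $l_n$ being indexed by $n\neq 0$), and that $T(0,\cdot)$ and $\mathcal{B}_\oo(0)$ likewise respect Fourier modes; hence by the product rule, since $-\mathcal{K}(0)$ is a constant (so $\mathcal{B}_\oo(0)T(0,-\mathcal{K}(0))=0$) and $\mathcal{B}_\oo(0)T(0,\cdot)$ of a constant vanishes, the only surviving term in $\partial F(0)[h]$ is $\big(1-\mathcal{R}(0)\big)^{-1}\mathcal{B}_\oo(0)T\big(0,-\partial\mathcal{K}(0)[h]\big)$, and $-\partial\mathcal{K}(0)[h] = \sigma(h+\ddot h) - 2(\gamma_\oo-\gamma_\ii)h = (2(\gamma_\ii-\gamma_\oo)+\sigma)h + \sigma\ddot h$ by Lemma~\ref{L10}. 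This gives the first displayed formula.

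For the second stage, I would insert the Fourier expansion $h(\theta)=\sum_n\hat h_n e^{in\theta}$ into $(2(\gamma_\ii-\gamma_\oo)+\sigma)h+\sigma\ddot h$, which multiplies the $n$-th mode by $2(\gamma_\ii-\gamma_\oo)+\sigma-\sigma n^2$; note the $n=0$ term survives here but will be killed by $\mathcal{B}_\oo(0)$, consistent with the sum in \eqref{44} running over $n\neq 0$. Then apply $T(0,\cdot)$ using \eqref{Qo0}, apply the operator $\mathcal{B}_\oo(0) = \frac{-1}{|\Theta_\oo|}\big(\alpha_\oo\partial_r - \beta_\oo\partial_\theta\big)$ evaluated at $r=1$ (reading off from the $\rho=0$ form of \eqref{P2c} and the definition of $\mathcal{B}_\oo$), which sends $r^n\mapsto n\alpha_\oo|\Theta_\oo|^{-1}(\cdots)$-type terms; this computation is essentially the one already carried out to derive \eqref{36A}, so one obtains $\mathcal{B}_\oo(0)T(0,\cdot)$ acting on the $n$-th mode by some explicit multiplier, and combining with $2(\gamma_\ii-\gamma_\oo)+\sigma-\sigma n^2$ produces a factor $\mu(n)=|n|\big(\sigma-2(\gamma_\oo-\gamma_\ii)-\sigma n^2\big)$ up to the remaining rational factor in $R^{2n}$ and $\alpha_j,\beta_j$. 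Finally I would apply $\big(1-\mathcal{R}(0)\big)^{-1}$, whose $n$-th multiplier is $(1-l_n)^{-1}$ with $l_n$ from \eqref{36A}; after simplification — rationalizing and regrouping the combination $\alpha_\ii$, $\alpha_\oo\frac{R^{2n}+1}{R^{2n}-1}$, and $\beta_\oo-\beta_\ii$ — one arrives at $q_n = \frac{A_n + i\,\mathrm{sign}(n)B}{A_n^2+B^2}\mu(n)$ with $A_n,B$ as stated.

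The main obstacle I expect is the bookkeeping in the last simplification: tracking the various complex factors $\Theta_j=\alpha_j+i\beta_j$ and powers $R^{\pm 2n}$ through $T(0,\cdot)$, $\mathcal{B}_\oo(0)$, and $(1-\mathcal{R}(0))^{-1}$, and then verifying that the messy quotient collapses to the compact form $\frac{A_n+i\,\mathrm{sign}(n)B}{A_n^2+B^2}$ — in particular that the imaginary part of the denominator cancels so that $A_n^2+B^2$ (a real quantity) appears, and that the numerator combines $\Theta_\ii$-type and $\bar\Theta_\oo$-type contributions into exactly $A_n + i\,\mathrm{sign}(n)B$. A secondary technical point worth a sentence is justifying that $\mathcal{B}_\oo(0)$ annihilates constants and the $r^{\pm n}$-coefficients assemble so that the zero mode drops out, which is what legitimizes restricting the sum to $n\in\Z\setminus\{0\}$.
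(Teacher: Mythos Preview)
Your approach matches the paper's: smoothness via the analyticity results already established, linearization by the product rule with most terms vanishing because $-\mathcal{K}(0)$ is a constant, and then the explicit Fourier computation using \eqref{Qo0} and \eqref{36A} followed by algebraic simplification.

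There is, however, one genuine gap in your linearization step. You argue that $\mathcal{B}_\oo(0)T(0,-\mathcal{K}(0))=0$ because $-\mathcal{K}(0)=:c$ is a constant, and this indeed kills the term coming from differentiating $(1-\mathcal{R}(\rho))^{-1}$. But the product rule produces two further terms you do not address:
\[
\big(1-\mathcal{R}(0)\big)^{-1}\,\partial\mathcal{B}_\oo(0)[h]\,T(0,c)
\quad\text{and}\quad
\big(1-\mathcal{R}(0)\big)^{-1}\,\mathcal{B}_\oo(0)\,\partial_\rho T(0,c)[h]\, .
\]
Knowing that $\mathcal{B}_\oo(0)T(0,\cdot)$ annihilates constants is a statement about the \emph{value} at $\rho=0$, not about these $\rho$-derivatives. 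The paper's argument (and the one you need) is stronger: by uniqueness in Proposition~\ref{P13} one has $T(\rho,c)=c$ for \emph{every} $\rho\in\mathcal{V}$, so $\partial_\rho T(0,c)[h]=0$; and since $\mathcal{B}_\oo(\rho)$ is a first-order differential operator for every $\rho$, the map $\rho\mapsto \mathcal{B}_\oo(\rho)c$ is identically zero, so $\partial\mathcal{B}_\oo(0)[h]\cdot c=0$. Once you insert this, your argument is complete and coincides with the paper's.

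Two minor points: the normalizing factor in $\mathcal{B}_\oo(0)$ is $|\Theta_\oo|^{-2}$, not $|\Theta_\oo|^{-1}$; and your parenthetical about $\mathcal{B}_\oo$ acting on $R\,\sph$ is unnecessary --- in \eqref{P3} it is $\mathcal{B}_\oo(\rho)$ on $\sph$, exactly as in Lemma~\ref{L8}(ii).
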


\begin{proof}
Smoothness of the map $F$ {follows from} Lemma~\ref{L10}, Proposition~\ref{P12}, 
Proposition~\ref{P13}, and Lemma~\ref{L14}. Let $h\in h^{4+\delta}(\sph)$. Then
\bqnn
\begin{split}
\partial F(0)[h] =\ & \partial \big(1-\mathcal{R}(\cdot)\big)^{-1}(0)[h]\,  \mathcal{B}_\mathrm{o} (0)\, T\big(0\,,\,-\mathcal{K}(0)\big) +
\big(1-\mathcal{R}(0)\big)^{-1} \,\partial \mathcal{B}_\mathrm{o} (0)[h]\, T\big(0\,,\,-\mathcal{K}(0)\big)\\
& +\big(1-\mathcal{R}(0)\big)^{-1} \,\mathcal{B}_\mathrm{o} (0) \, \partial_\rho T\big(0\,,\,-\mathcal{K}(0)\big)[h]
+ \big(1-\mathcal{R}(0)\big)^{-1} \,\mathcal{B}_\mathrm{o} (0)\, T\big(0\,,\,-\partial\mathcal{K}(0)[h]\big)\ .
\end{split}
\eqnn
But $-\mathcal{K}(0)=-\sigma-\gamma_\mathrm{o}+\gamma_\mathrm{i}=:c\in\R$ and $T(0,c)=c$ by uniqueness, so $\partial \mathcal{B}_\mathrm{o} (0)[h]=0$ as this is the derivative of $\big(\rho\mapsto \mathcal{B}_\mathrm{o} (\rho)c=0\big)$ and similarly $ \partial_\rho T\big(0\,,\,-\mathcal{K}(0)\big)[h]=0$ as this is the derivative of \mbox{$\big(\rho\mapsto T(\rho,c)=c\big)$}. The formula for $\partial F(0)[h]$ now follows from Lemma~\ref{L10}. To compute its Fourier expansion, consider \mbox{$h\in h^{4+\delta}(\sph)$} with $h(\theta)=\sum_{n\in\Z} \hat{h}_n e^{in\theta}$.
Invoking \eqref{Qo0} and recalling that $ \mathcal{B}_\mathrm{o} (0)=-\big(\alpha_\oo\partial_r-\beta_\oo\partial_\theta)/\vert\Theta_\oo\vert^2$ on $\sph$,  we obtain
\bqnn
\begin{split}
\partial F(0)[h](\theta)&= \big(1-\mathcal{R}(0)\big)^{-1} \mathcal{B}_\mathrm{o} (0) T\big(0\,,\,(2(\gamma_\mathrm{i}-\gamma_\mathrm{o})+\sigma) h+\sigma\ddot{h}\big) (\theta)\\
&= \sum_{n\in\Z\setminus\{0\}} \hat{h}_n\, \frac{n \big(2(\gamma_\mathrm{i}-\gamma_\mathrm{o})+\sigma-\sigma n^2\big) (R^{2n}-1)}{\Theta_\oo+R^{2n}\bar{\Theta}_\oo}\,\frac{1}{1-l_n}\, e^{in\theta}
\end{split}
\eqnn
with $l_n$ given by \eqref{36A}. Elementary calculations {now} lead to the 
assertion.
\end{proof}

Observe that $A_n=A_{-n}$ for $n\in\Z\setminus\{0\}$ and that \
\bqn\label{45}
A_n\searrow \alpha_\oo+\alpha_\ii\quad\text{as}\quad \vert n\vert \rightarrow \infty\ .
\eqn
The next proposition is a consequence of the previous lemma and fundamental for our well-posedness result.

\begin{prop}\label{P17}
$-\partial F(0)\in \mathcal{H}\big(h^{4+\delta}(\sph),h^{1+\delta}(\sph)\big)$, that is, $\partial F(0)\in \mathcal{L}\big(h^{4+\delta}(\sph),h^{1+\delta}(\sph)\big)$ is the generator of an analytic semigroup on $h^{1+\delta}(\sph)$.
\end{prop}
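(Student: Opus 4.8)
The plan is to exploit the explicit Fourier diagonalization of $\partial F(0)$ furnished by Proposition~\ref{L15}: with respect to the orthogonal basis $\{e^{in\theta}\}_{n\in\Z}$ the operator acts as the Fourier multiplier $(\hat h_n)\mapsto(q_n\hat h_n)$ with $q_0=0$ and, for $n\neq0$,
\bqnn
q_n=\frac{A_n+i\,\mathrm{sign}(n)B}{A_n^2+B^2}\,\mu(n)\, ,\qquad \mu(n)=|n|\big(\sigma-2(\gamma_\oo-\gamma_\ii)-\sigma n^2\big)\, .
\eqnn
Since a Fourier multiplier generates an analytic semigroup on the little H\"older scale precisely when its spectrum lies in a proper sub-sector of the left half-plane (with the correct resolvent decay), the core of the argument is a sharp asymptotic analysis of $q_n$ as $|n|\to\infty$, together with the verification that the finitely many ``bad'' low modes cause no obstruction.

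First I would record the asymptotics of the ingredients. By \eqref{45} we have $A_n\to\alpha_\oo+\alpha_\ii=:A_\infty>0$, with $A_n-A_\infty=O(R^{-2|n|})$ exponentially small, while $B=\beta_\oo-\beta_\ii$ is a fixed real constant and $\mu(n)=-\sigma|n|^3+O(|n|)$. Hence
\bqnn
q_n=-\frac{A_\infty+i\,\mathrm{sign}(n)B}{A_\infty^2+B^2}\,\sigma|n|^3\,\big(1+O(|n|^{-2})\big)\, ,\qquad |n|\to\infty\, .
\eqnn
The leading coefficient $-\sigma(A_\infty+i\,\mathrm{sign}(n)B)/(A_\infty^2+B^2)$ has strictly negative real part $-\sigma A_\infty/(A_\infty^2+B^2)<0$, so $\mathrm{Re}\,q_n\le -c|n|^3$ for $|n|$ large, and $|\mathrm{Im}\,q_n|\le C\,|\mathrm{Re}\,q_n|$; thus all but finitely many $q_n$ lie in a fixed sector $\Sigma:=\{z:|\arg(-z)|\le\psi\}$ with $\psi<\pi/2$. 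For the remaining finitely many $n$ (those with $\mu(n)\ge0$, i.e.\ roughly $|n|\le\sqrt{1-2(\gamma_\oo-\gamma_\ii)/\sigma}$, a bounded set) the values $q_n$ form a finite set; I would simply enlarge the sector's vertex by shifting: choose $\omega>0$ so large that $q_n-\omega\in\Sigma$ for every $n\in\Z$, which is possible since only finitely many $q_n$ have nonnegative real part and all have bounded imaginary part relative to their modulus once $|n|$ is large. Consequently $\partial F(0)-\omega$ is a Fourier multiplier with symbol in the sector $-\Sigma$.

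The passage from this spectral picture to $-\partial F(0)\in\mathcal H\big(h^{4+\delta}(\sph),h^{1+\delta}(\sph)\big)$ is then a resolvent estimate. For $\lambda\notin-\Sigma$ the resolvent $(\lambda-\partial F(0))^{-1}$ is the Fourier multiplier with symbol $m_n(\lambda):=(\lambda-q_n)^{-1}$, and I must show $\|(\lambda-\partial F(0))^{-1}\|_{\mathcal L(h^{1+\delta})}\le C/|\lambda|$ and, for the domain characterization, that $\partial F(0)$ maps $h^{4+\delta}(\sph)$ boundedly into $h^{1+\delta}(\sph)$ with $h^{4+\delta}(\sph)=\big(h^{1+\delta}(\sph),\mathrm{D}(\partial F(0))\big)$ matching. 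This is exactly the setting of the multiplier theorem Proposition~\ref{AB}: one checks conditions (i)--(iii) for the sequences $(m_n(\lambda))$ (with $r=s=1+\delta$, so that $|n|^{r-s}=1$) and for $(|n|^3 m_n(\lambda))$ (with $r=1+\delta$, $s=4+\delta$, so the weight is $|n|^{-3}$), uniformly in $\lambda$ outside the sector, using the asymptotics $q_n\sim -c_\pm|n|^3$ to dominate the first, second, and third differences $m_{n+1}-m_n$, $m_{n+2}-2m_{n+1}+m_n$ by the required negative powers of $|n|$ — here the exponentially small corrections $A_n-A_\infty=O(R^{-2|n|})$ make the difference estimates trivial, and $q_{n+1}-q_n=-c_\pm\sigma(3n^2+O(1))$ gives the needed $O(|n|^{-1})$, $O(|n|^{-2})$ decay after division. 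This yields $(\lambda-\partial F(0))^{-1}\in\mathcal L\big(h^{1+\delta}(\sph)\big)$ with norm $O(1/|\lambda|)$ for $\lambda$ outside a suitable sector, which is precisely generation of an analytic semigroup; the identification of $\mathrm D(\partial F(0))$ with $h^{4+\delta}(\sph)$ follows since the symbol grows exactly like $|n|^3$ and $h^{4+\delta}(\sph)$ is the corresponding multiplier domain (as in Proposition~\ref{L15} and the closure description used there).

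\smallskip

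The main obstacle I anticipate is the uniformity in $\lambda$ of the difference estimates (ii)--(iii) of Proposition~\ref{AB} for the resolvent symbols $m_n(\lambda)=(\lambda-q_n)^{-1}$: one has to show, for all $\lambda$ in the complement of a fixed sector, that $|n|\,|m_{n+1}(\lambda)-m_n(\lambda)|$ and $n^2\,|m_{n+2}(\lambda)-2m_{n+1}(\lambda)+m_n(\lambda)|$ stay bounded \emph{and} that after the $|n|^{-3}$-weighting (needed for the $h^{4+\delta}\to h^{1+\delta}$ mapping property) one recovers the $1/|\lambda|$ decay rather than just boundedness. The clean way around it is to write $m_{n+1}-m_n=\frac{q_{n+1}-q_n}{(\lambda-q_{n+1})(\lambda-q_n)}$ and similarly for the second difference, bound each factor $|\lambda-q_n|$ below by $c(|\lambda|+|q_n|)\ge c(|\lambda|+|n|^3)$ (valid off the sector, for $|n|$ large), and use $q_{n+1}-q_n=O(|n|^2)$, $q_{n+2}-2q_{n+1}+q_n=O(|n|)$; the finitely many small-$|n|$ terms are handled separately and contribute nothing since there $q_n$ is bounded and $|\lambda-q_n|\gtrsim|\lambda|$ for large $|\lambda|$. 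Everything else — ellipticity and analyticity of the building blocks, the splitting of the system — has already been done in Lemmas~\ref{L8}, \ref{L10} and Propositions~\ref{P12}, \ref{P13}, \ref{L15}, so this spectral-multiplier computation is the only new ingredient.
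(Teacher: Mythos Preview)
Your approach is essentially that of the paper: both exploit the explicit Fourier diagonalization from Proposition~\ref{L15}, verify the difference conditions (i)--(iii) of Proposition~\ref{AB} for the resolvent multipliers $Q_n^\lambda=(\lambda-q_n)^{-1}$ (to get the domain characterization $h^{4+\delta}\to h^{1+\delta}$) and for $S_n^\lambda=\lambda(\lambda-q_n)^{-1}$ (to get the uniform resolvent bound), using the same algebraic identities $m_{n+1}-m_n=(q_{n+1}-q_n)/[(\lambda-q_{n+1})(\lambda-q_n)]$ and the asymptotics $q_n\sim -c_\pm|n|^3$, $q_{n+1}-q_n=O(n^2)$, $q_{n+2}-2q_{n+1}+q_n=O(n)$. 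The paper works on the half-plane $\mathrm{Re}\,\lambda\ge\lambda_*$ rather than the complement of a sector, but this is cosmetic.

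One technical point you gloss over that the paper handles explicitly: Proposition~\ref{AB} yields boundedness on $C^s(\sph)$, not on $h^s(\sph)$. The passage to little H\"older spaces is not automatic; the paper does it by noting that the resolvent multiplier also acts boundedly on every $H^s(\sph)$ (trivially, from $\sup_n|Q_n^\lambda|<\infty$), and since $H^s(\sph)$ embeds densely in $h^{1+\delta}(\sph)$ for $s>5/2$, the $C^{1+\delta}$-bound restricts to $h^{1+\delta}(\sph)$. Without this density argument your conclusion ``$(\lambda-\partial F(0))^{-1}\in\mathcal L\big(h^{1+\delta}(\sph)\big)$'' does not follow directly from Proposition~\ref{AB}.
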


\begin{proof}
Based on Proposition~\ref{AB}, we prove in a first step that $\big(\lambda-\partial F(0)\big)^{-1}\in\mathcal{L}\big(h^{1+\delta}(\sph),h^{4+\delta}(\sph)\big)$,
where, according to Proposition~\ref{L15} (with $q_0:=0$){, we have that}
$$
\big(\lambda-\partial F(0)\big)^{-1} h =\sum_{n\in\Z}\hat{h}_n\,\frac{1}{\lambda-q_n}\, e^{in\theta}\, ,\quad \mathrm{Re}\, \lambda\ge \lambda_*\, ,
$$
for $h(\theta)=\sum_{n\in\Z} \hat{h}_n e^{in\theta}$ and,
by \eqref{45},
$$
\lambda_* :=1+\frac{2\vert\gamma_\oo-\gamma_\ii\vert}{\alpha_\oo+\alpha_\ii}>\mathrm{Re}\, q_n\, ,\quad n\in\Z\ .
$$
Let $\lambda\in\C$ with $ \mathrm{Re}\, \lambda\ge \lambda_*$ be fixed and set $Q_n^\lambda:=(\lambda-q_n)^{-1}$. Then
\bqn\label{47}
\sup_{n\in\Z\setminus\{0\}}\vert n\vert^3\left\vert Q_n^\lambda\right\vert <\infty\, ,
\eqn
since
$$
\frac{q_n}{n^3}\longrightarrow \pm\frac{\alpha_\oo+\alpha_\ii\pm iB}{(\alpha_\oo+\alpha_\ii)^2+B^2}\,\sigma\quad\text{as}\quad n\longrightarrow\pm \infty\ .
$$
Next observe that, for each $m\in\N$,
\bqnn\label{48}
\vert n\vert^m\vert A_{n+1}-A_n\vert =\vert n\vert^m \frac{2R^{2n}(R-1)}{(R^{2n}-1)(R^{2n+1}-1)}\longrightarrow 0 \quad\text{as}\quad n\longrightarrow\pm \infty\ .
\eqnn
Thus, letting $z_n:=q_n/\mu(n)$ we derive from \eqref{45} 
\bqnn
\vert n\vert^m\vert z_{n+1}-z_n\vert\le \frac{A_{n+1}A_n}{(A_{n+1}^2+B^2)(A_n^2+B^2)}\,\vert n\vert^m\,\vert A_{n+1}-A_n\vert +\vert n\vert^m\, \vert A_{n+1}-A_n\vert\,  B^2\, ,
\eqnn
so, for each $m\in\N$,
\bqn\label{49}
\vert n\vert^m\vert z_{n+1}-z_n\vert  \longrightarrow 0 \quad\text{as}\quad \vert n\vert\longrightarrow \infty\ .
\eqn
Thus, \eqref{47} gives
\bqnn
\begin{split}
\vert n\vert^4\, \left\vert Q_{n+1}^\lambda -Q_n^\lambda\right\vert 
& =\left\vert n^3 Q_{n+1}^\lambda\right\vert\, \left\vert n^3 Q_{n}^\lambda\right\vert\, \left\vert\frac{q_{n+1}-q_n}{n^2}\right\vert\\
&\le  c \frac{\vert z_{n+1}\vert\,\vert \mu(n+1)-\mu(n)\vert}{n^2} +c \vert z_{n+1}-z_n\vert\, \vert n\vert\, \left\vert\frac{\mu(n)}{n^3}\right\vert \ .
\end{split}
\eqnn 
Taking into account that $(z_n)$ is bounded and that $\vert\mu(n)/n^3\vert\rightarrow \sigma$ as $\vert n\vert\rightarrow\infty$, we conclude
\bqn\label{50}
\sup_{n\in\Z\setminus\{0\}}\vert n\vert^4\left \vert Q_{n+1}^\lambda - Q_n^\lambda\right\vert <\infty\ .
\eqn
In particular, we have shown
\bqn\label{51}
\sup_{n\in\Z\setminus\{0\}} n^{-2}\,\left\vert q_{n+1} - q_n\right\vert <\infty\ .
\eqn
Observe then that
$${
\mu(n)\big[\mu(n+1)-\mu(n+2)\big]+\mu(n+2)\big[\mu(n+1)-\mu(n)\big]=O(n^4)\, ,}
$$
hence, due to \eqref{49},
\bqn\label{53}
\begin{split}
&\left\vert\frac{1}{n^4}{\Big[}q_n\left(q_{n+1}-q_{n+2}\right)+q_{n+2}
\left(q_{n+1}-q_n\right){\Big]}\right\vert
\\
& \qquad\quad\quad \le \left\vert z_n\, z_{n+1}\,      \frac{\mu(n)\big[\mu(n+1)-\mu(n+2)\big]+\mu(n+2)\big[\mu(n+1)-\mu(n)\big]}{n^4}\right\vert\\
&\qquad\qquad\quad + \left\vert z_n\,\frac{\mu(n)}{n^3}\,\frac{\mu(n+1)}{n^3}\, (z_{n+1}-z_{n+2})\, n^2\right\vert + \left\vert z_n\, \frac{\mu(n)}{n^3}\,\frac{\mu(n+2)}{n^3}\, (z_{n+1}-z_{n+2})\, n^2\right\vert\\
&\qquad\qquad\quad  + \left\vert z_{n+1}\, \frac{\mu(n+2)}{n^3}\, \frac{\mu(n+1)}{n^3}\, (z_{n+2}-z_n)\, n^2\right\vert\\
& \quad\qquad\quad \le c\ .
\end{split}
\eqn
Writing
\bqnn
\begin{split}
&\vert n\vert^5\, \left\vert Q_{n+2}^\lambda-2 Q_{n+1}^\lambda +Q_n^\lambda\right\vert \\
&\qquad =\left\vert n^3 Q_{n+2}^\lambda\right\vert\, \left\vert n^3 Q_{n+1}^\lambda\right\vert\, \left\vert n^3 Q_{n}^\lambda\right\vert\, \left\vert\frac{\lambda (q_{n+2}-2 q_{n+1}+q_n)}{n^4}+\frac{q_n(q_{n+1}-q_{n+2})+q_{n+2}(q_{n+1}-q_n)}{n^4}\right\vert
\end{split}
\eqnn 
we deduce from \eqref{47} and \eqref{51} {that}
\bqn\label{54}
\sup_{n\in\Z\setminus\{0\}}\vert n\vert^5\, \left\vert Q_{n+2}^\lambda-2 Q_{n+1}^\lambda +Q_n^\lambda\right\vert  <\infty\ .
\eqn
Consequently, $\big(\lambda-\partial F(0)\big)^{-1}\in\mathcal{L}\big(C^{1+\delta}(\sph),C^{4+\delta}(\sph)\big)$ for $\mathrm{Re}\,\lambda\ge \lambda_*$ by Lemma~\ref{AB} and \eqref{47}, \eqref{50}, and \eqref{54}. Since \eqref{47} also ensures 
\bqn\label{101}
\big(\lambda-\partial F(0)\big)^{-1}\in\mathcal{L}\big(H^s(\sph)\big)\, ,\quad s>0\, ,
\eqn
we conclude $\big(\lambda-\partial F(0)\big)^{-1}\in\mathcal{L}\big(h^{1+\delta}(\sph),h^{4+\delta}(\sph)\big)$ for \mbox{$\mathrm{Re}\,\lambda\ge \lambda_*$} as in the proof of Lemma~\ref{L14}.

The second step consists of proving the resolvent estimate
\bqn\label{100}
\vert\lambda\vert\,\left\| \big(\lambda-\partial F(0)\big)^{-1}\right\|_{\mathcal{L}(h^{1+\delta}(\sph))}\,\le\, c\, ,\quad \mathrm{Re}\,\lambda\ge \lambda_*\ .
\eqn
Since $\mathrm{Re}\, q_n <0$ for $\vert n\vert$ sufficiently large, elementary calculations show that 
\bqn\label{55}
\vert\lambda-q_n\vert^2\ge c_0\vert \lambda\vert^2\, ,\quad \mathrm{Re}\, \lambda\ge\lambda_*\, ,\quad n\in\Z\, ,
\eqn
for some $c_0>0$. Thus, setting $S_n^\lambda:=\lambda (\lambda-q_n)^{-1}$it follows
\bqn\label{56}
\sup_{n\in\Z\setminus\{0\}\, ,\,  \mathrm{Re}\, \lambda\ge\lambda_*}\, \vert S_n^\lambda\vert <\infty\ .
\eqn
Similarly, there is $c_1>0$ such that
\bqn\label{57}
\vert\lambda-q_n\vert^2\ge c_1\vert q_n\vert^2\, ,\quad \mathrm{Re}\, \lambda\ge\lambda_*\, ,\quad n\in\Z\, ,
\eqn
and we thus obtain from
$$
\vert n\vert\,\left\vert S_{n+1}^\lambda-S_{n}^\lambda\right\vert =\left\vert S_{n+1}^\lambda\right\vert\, \left\vert \frac{n^3}{\lambda-q_n}\right\vert\, \left\vert\frac{q_{n+1}-q_n}{n^2}\right\vert
$$
together with \eqref{51}, \eqref{56}, and \eqref{57}  combined with \eqref{47} that
\bqn\label{58}
\sup_{n\in\Z\setminus\{0\}\, ,\,  \mathrm{Re}\, \lambda\ge\lambda_*}\, \vert n\vert \left\vert S_{n+1}^\lambda-S_n^\lambda\right\vert <\infty\ .
\eqn
Finally, noticing that the right hand side of
\bqnn
\begin{split}
&\left\vert\frac{q_{n+2}-2 q_{n+1}+q_n}{n}\right\vert\\
&\qquad \le \vert z_{n+2}\vert\, \left\vert\frac{\mu(n+2)-2 \mu(n+1)+\mu(n)}{n}\right\vert + 2\left\vert\frac{\mu(n+1)}{n^3}\right\vert\, \vert z_{n+2}-z_{n+1}\vert\, n^2 +\left\vert\frac{\mu(n)}{n^3}\right\vert\,\vert z_n-z_{n+1}\vert\, n^2
\end{split}
\eqnn
is bounded by \eqref{49} and writing
\bqnn
\begin{split}
n^2\,&\left\vert S_{n+2}^\lambda-2S_{n+1}^\lambda+S_{n+2}^\lambda\right\vert\\
& =\left\vert S_{n+2}^\lambda\right\vert\, \left\vert n^3 Q_{n+1}^\lambda\right\vert\, \left\vert \frac{n^3}{\lambda-q_n}\right\vert\, \frac{1}{n^4}\, \left\vert \lambda (q_{n+2}-2q_{n+1}+q_n)+q_n(q_{n+1}-q_{n+2})+q_{n+2}(q_{n+1}-q_n)\right\vert
\end{split}
\eqnn
we deduce from \eqref{47}, \eqref{53}, \eqref{55}, and \eqref{56}
\bqn\label{60}
\sup_{n\in\Z\setminus\{0\}\, ,\,  \mathrm{Re}\, \lambda\ge\lambda_*}\, n^2\,\left\vert S_{n+2}^\lambda-2S_{n+1}^\lambda+S_n^\lambda\right\vert <\infty\ .
\eqn
Therefore, Lemma~\ref{AB} and \eqref{56}, \eqref{58}, and \eqref{60} imply
$$
\vert\lambda\vert\,\left\| \big(\lambda-\partial F(0)\big)^{-1}\right\|_{\mathcal{L}(C^{1+\delta}(\sph)}\,\le\, c\, ,\quad \mathrm{Re}\, \lambda\ge \lambda_*\, ,
$$
whence \eqref{100} due to \eqref{101}. This proves the assertion.
\end{proof}

Now we are in a position to establish {a} well-posedness result regarding 
equation~\eqref{41}.

\begin{thm}\label{ex}
There exists an open zero neighborhood $\mathcal{O}\subset\mathcal{V}$ in $h^{4+\delta}(\sph)$ such that for each $\rho_0\in\mathcal{O}$ there is $T:=T(\rho_0)>0$  and a unique solution
$$
\rho\in C\big([0,T],\mathcal{V}\big)\cap C^1\big([0,T],h^{1+\delta}(\sph)\big)
$$
to
$$
 \partial_t\rho=F(\rho)\, , \quad t>0\, ,\qquad 
 \rho(0)=\rho_0\ .
$$
Moreover, $\rho\big([0,T])\big)\subset\mathcal{O}$.
\end{thm}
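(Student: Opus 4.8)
The plan is to read \eqref{41} as a fully nonlinear parabolic evolution equation on the little H\"older scale and to appeal to the corresponding abstract theory. Set $E_0:=h^{1+\delta}(\sph)$ and $E_1:=h^{4+\delta}(\sph)$; since both spaces contain $BUC^\infty(\sph)$ as a dense subspace, $E_1$ is continuously and densely embedded in $E_0$. By Proposition~\ref{L15} the right-hand side satisfies $F\in C^\infty\big(\mathcal{W},E_0\big)$, and by Proposition~\ref{P17} its Fr\'echet derivative $A:=\partial F(0)\in\mathcal{L}(E_1,E_0)$ obeys $-A\in\mathcal{H}(E_1,E_0)$, i.e.\ $A$ generates an analytic semigroup on $E_0$ with domain $E_1$. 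Moreover the continuous interpolation spaces between $E_0$ and $E_1$ are again little H\"older spaces over $\sph$, so the pair $(E_0,E_1)$ is of the type for which the theory of fully nonlinear parabolic problems applies (see, e.g., the monographs of Lunardi or Amann, or Angenent's work on nonlinear analytic semiflows). Given this, Theorem~\ref{ex} is essentially a citation; I would nevertheless carry out the reduction below so that the mechanism is transparent.

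First I would rewrite the Cauchy problem $\partial_t\rho=F(\rho)$, $\rho(0)=\rho_0$, in the semilinear form $\partial_t\rho-A\rho=G(\rho)$ with $G(\rho):=F(\rho)-A\rho$, where $G\in C^\infty\big(\mathcal{W},E_0\big)$, $G(0)=0$, and crucially $\partial G(0)=0$. Because $-A\in\mathcal{H}(E_1,E_0)$ and we work in the little H\"older (hence continuous interpolation) setting, the linear problem $\partial_t u-Au=f$, $u(0)=\rho_0$, enjoys maximal regularity in the appropriate little-H\"older-in-time class: for $f$ in that class and $\rho_0\in E_1$ there is a unique solution $u\in C\big([0,T],E_1\big)\cap C^1\big([0,T],E_0\big)$, and in the little-H\"older-in-time norm the solution map is bounded uniformly for $T\in(0,1]$. (Here the use of \emph{little} H\"older spaces rather than ordinary ones is precisely what guarantees the density and approximation properties that make this uniform estimate, and the continuity up to $t=0$ into $E_1$, hold.) A contraction argument for the associated fixed-point map on a small closed ball around the path $t\mapsto e^{tA}\rho_0$ then produces the solution: the Lipschitz constant of $G:E_1\to E_0$ is small near $0$ since $\partial G(0)=0$, and combined with the uniform maximal-regularity bound and the choice of $T$ small one obtains both the self-mapping and the strict contraction property. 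This yields, for every $\rho_0$ in a sufficiently small zero neighborhood $\mathcal{O}\subset\mathcal{W}$ in $h^{4+\delta}(\sph)$, a time $T=T(\rho_0)>0$ and a unique $\rho\in C\big([0,T],\mathcal{V}\big)\cap C^1\big([0,T],h^{1+\delta}(\sph)\big)$ solving $\partial_t\rho=F(\rho)$ with $\rho(0)=\rho_0$; shrinking $T$ and using that $\mathcal{O}$ is open and $\rho$ is $E_1$-continuous gives $\rho\big([0,T]\big)\subset\mathcal{O}$.

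The main point to appreciate is that the genuine difficulty of this well-posedness result is not located in Theorem~\ref{ex} at all: it lies in Proposition~\ref{P17}, the verification --- via the multiplier criterion of Proposition~\ref{AB} and the explicit symbol $q_n$ of Proposition~\ref{L15} --- that $\partial F(0)$ generates an analytic semigroup on $h^{1+\delta}(\sph)$, together with the smoothness of $F$ behind Proposition~\ref{L15}. Once those are in hand, the only steps in the present theorem requiring a little attention are (a) identifying the correct function-space setting, so that the \emph{fully} nonlinear (rather than merely quasilinear) abstract result is available, and (b) the uniform-in-$T$ estimates for the linearised maximal-regularity operator that close the contraction; both are routine in the little H\"older framework. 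In the write-up I would most likely simply invoke a ready-made existence, uniqueness and regularity theorem for fully nonlinear parabolic equations in continuous interpolation spaces and feed it Propositions~\ref{L15} and~\ref{P17}.
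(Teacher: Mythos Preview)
Your proposal is correct and follows essentially the same route as the paper: both recognize that, once Propositions~\ref{L15} and~\ref{P17} are in hand, Theorem~\ref{ex} reduces to an invocation of an abstract local well-posedness theorem for fully nonlinear parabolic equations in the little H\"older/continuous interpolation setting (the paper cites \cite[Thm.~8.1]{Lunardi}). The one technical detail the paper makes explicit, and that you only gesture at, is the auxiliary-exponent trick: rather than working directly with the pair $\big(h^{1+\delta}(\sph),h^{4+\delta}(\sph)\big)$, the paper fixes $\xi\in(0,\delta)$, uses that Proposition~\ref{P17} actually gives $-\partial F(0)\in\mathcal{H}\big(h^{4+\xi}(\sph),h^{1+\xi}(\sph)\big)$ for \emph{any} exponent, and then identifies $h^{1+\delta}(\sph)\doteq\big(h^{1+\xi}(\sph),h^{4+\xi}(\sph)\big)_{\vartheta,\infty}^0$ with $\vartheta=(\delta-\xi)/3$, so that the target space is itself a continuous interpolation space of the underlying generator --- this is exactly what is needed for maximal $C^0$-regularity (equivalently, for Lunardi's hypotheses) to hold, and is the precise content of your remark that ``both are routine in the little H\"older framework.'' The paper also uses openness of $\mathcal{H}$ in $\mathcal{L}$ to get sectoriality of $\partial F(\rho)$ for all $\rho$ in a neighborhood, whereas your contraction argument linearizes only at $0$; either variant works here.
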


\begin{proof}
We shall invoke \cite[Thm.8.1]{Lunardi}. Fix $\xi\in (0,\delta)$ and put $\vartheta:=(\delta-\xi)/3$. Set 
$$
E:=h^{1+\xi}(\sph)\, ,\quad E_0:=h^{1+\delta}(\sph)\, ,\quad E_1:=h^{4+\delta}(\sph)
$$ 
in \cite[Thm.8.1]{Lunardi}. As $\delta\in (0,1)$ was arbitrary in Proposition~\ref{P17}, it follows that
$$
-\partial F(0)\in \mathcal{H}\big(h^{4+\xi}(\sph),h^{1+\xi}(\sph)\big)\ .
$$
Thus, since $\mathcal{H}\big(h^{4+\xi}(\sph),h^{1+\xi}(\sph)\big)$ is open in 
$\mathcal{L}\big(h^{4+\xi}(\sph),h^{1+\xi}(\sph)\big)$, there is an open zero 
neighborhood $U_\xi$ in $h^{4+\xi}(\sph)$ such that $-\partial F(\rho)\in 
\mathcal{H}\big(h^{4+\xi}(\sph),h^{1+\xi}(\sph)\big)$ for each $\rho\in U_\xi$. 
Then $\mathcal{O}:=U_\xi\cap \mathcal{W}$ with $\mathcal{W}$ from Lemma~\ref{L14} 
is an open zero neighborhood in $h^{4+\delta}(\sph)$. Furthermore, 
$\partial F(\rho):h^{4+\delta}(\sph)\rightarrow h^{1+\delta}(\sph)$ for 
$\rho\in\mathcal{O}$ is the part of $-\partial F(\rho)\in \mathcal{H}
\big(h^{4+\xi}(\sph),h^{1+\xi}(\sph)\big)$ in 
$$
 h^{1+\delta}(\sph)\doteq \big(h^{1+\xi}(\sph),h^{4+\xi}(\sph)\big)_{\vartheta,\infty}^0
$$ 
with continuous interpolation functor $(\cdot,\cdot)_{\vartheta,\infty}^0$ and 
$$
\big\{h\in h^{4+\xi}(\sph)\,;\, \partial F(\rho)[h]\in h^{1+\delta}(\sph)\big\} =h^{4+\delta}(\sph)\ .
$$ 
Now the assertion is a consequence of \cite[Thm.8.1]{Lunardi}.
\end{proof}

To finish off the proof of Theorem~\ref{T1} let
$$
\rho\in C\big([0,T],{\mathcal{V}}\big)\cap C^1\big([0,T],h^{1+\delta}(\sph)\big)
$$
be the solution to \eqref{41} for a given initial value $\rho_0\in\mathcal{O}$. Then
$$
Q_\ii:=S\big(\rho,\partial_t\rho\big)\in C\big([0,T],h^{2+\delta}(B^\ii)\big)\, ,\qquad Q_\oo:=T\big(\rho,Q_\ii-\mathcal{K}(\rho)\big)\in C\big([0,T],h^{2+\delta}(B^\oo)\big)
$$
by Lemma~\ref{L10}, Proposition~\ref{P12}, and Proposition~\ref{P13}. Since $\rho$ solves \eqref{P3}, it follows, for $\rho=\rho(t)$ with $t\in [0,T]$ fixed, that 
$$
 \frac{\phi_*^\rho \partial_t\rho }{\vert \nabla N_\rho\vert}\, =\, 
 \frac{-1}{\vert\Theta_\oo\vert^2}\Big(\alpha_\oo\partial_{\nu_\rho}
 \big(\phi_*^\rho Q_\oo\big)+\beta_\oo\partial_{\tau_\rho}\big(\phi_*^\rho 
 Q_\oo\big)\Big)\quad \text{on}\quad \Gamma_{\rho}\ .
$$
Recalling from \eqref{P2a} that $\Delta \big(\phi_*^\rho Q_\oo\big)=0$ and $\mathrm{div}\left(\vec{z}\times\nabla \big(\phi_*^\rho Q_\oo\big)\right)=0$ in $\Om_\rho^\oo$, we deduce from Gauss' Theorem
\bqnn
\frac{-1}{\vert\Theta_\oo\vert^2} \int_{\Gamma_\rho} \left(\alpha_\oo\partial_{\nu_\rho}\big(\phi_*^\rho Q_\oo\big)+\beta_\oo\partial_{\tau_\rho}\big(\phi_*^\rho Q_\oo\big)\right)\, \rd\sigma = \frac{-1}{\vert\Theta_\oo\vert^2} \int_{R\sph} \left(\alpha_\oo\partial_\nu\big(\phi_*^\rho Q_\oo\big)+\beta_\oo\partial_\tau\big(\phi_*^\rho Q_\oo\big)\right)\, \rd\sigma
\eqnn
and thus, due to $\phi_*^\rho Q_\oo=Q_\oo$ and \eqref{P2c} on {$R\, \sph$, that}
\bqn\label{just}
\int_{\Gamma_\rho} \frac{\phi_*^\rho \partial_t\rho }{\vert \nabla N_\rho\vert}\, \rd\sigma =0\, ,\quad t\in [0,T]\ .
\eqn
Consequently, with $$P_\ii(t):=\phi_*^{\rho(t)} Q_\ii(t)\in h^{2+\delta}(\Omega_{\rho(t)}^\ii)\, ,\qquad P_\oo(t):=\phi_*^{\rho(t)} Q_\oo(t)h^{2+\delta}(\Omega_{\rho(t)}^\oo)$$ for $t\in [0,T]$ we obtain a solution $(\rho,P_\ii,P_\oo)$ to \eqref{15a}-\eqref{15d} which is unique up to additive constants in the pressures $P_\ii$ and $P_\oo$. This yields Theorem~\ref{T1}.

\section{Proof of Theorem~\ref{T2}}\label{stable}

We first prove instability of the trivial solution if $\varrho_\ii>\varrho_\oo$ as claimed in Theorem~\ref{T2}. {Recall that \mbox{$\gamma_j:=\varrho_j\omega^2/2$}.}

\begin{thm}\label{T21}
If $\varrho_\mathrm{i}>\varrho_\mathrm{o}$, then
$$
\partial_t\rho=F(\rho)\, ,\quad t>0\, ,\qquad \rho(0)=\rho_0\, ,
$$
has backward solutions {which do exponentially decay to zero}. In particular, 
the trivial solution $\rho=0$ of this flow is unstable.
\end{thm}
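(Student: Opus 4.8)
The plan is to read off instability directly from the linearization $\partial F(0)$ computed in Proposition~\ref{L15}, and then to produce genuine backward solutions by constructing a nontrivial, finite-dimensional local unstable manifold for the fully nonlinear equation $\partial_t\rho=F(\rho)$.

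\textbf{Locating the unstable modes.} Since $\varrho_\mathrm{i}>\varrho_\mathrm{o}$ gives $\gamma_\mathrm{i}>\gamma_\mathrm{o}$, in the notation of Proposition~\ref{L15} one has $\mu(\pm1)=\sigma-2(\gamma_\mathrm{o}-\gamma_\mathrm{i})-\sigma=2(\gamma_\mathrm{i}-\gamma_\mathrm{o})>0$, while $A_{\pm1}=\frac{R^2+1}{R^2-1}\alpha_\mathrm{o}+\alpha_\mathrm{i}>0$; hence $\mathrm{Re}\,q_{\pm1}=\frac{A_1}{A_1^2+B^2}\,\mu(1)>0$, so $\partial F(0)$ has spectrum in the open right half-plane. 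By \eqref{45} the factor $A_n/(A_n^2+B^2)$ stays between two positive constants, whereas $\mu(n)=|n|\big(\sigma-2(\gamma_\mathrm{o}-\gamma_\mathrm{i})-\sigma n^2\big)\to-\infty$, so only finitely many of the eigenvalues $q_n$ have positive real part. I would fix $\beta_0>0$ strictly below $\min\{\mathrm{Re}\,q_n:\mathrm{Re}\,q_n>0\}$ (nonempty, as it contains $\mathrm{Re}\,q_1$) and set $N_+:=\{n\in\Z:\mathrm{Re}\,q_n>0\}$, a finite set with $\{-1,1\}\subseteq N_+$ and $0\notin N_+$.

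\textbf{Spectral dichotomy.} Next I would split $h^{1+\delta}(\sph)=X_-\oplus X_+$ along Fourier modes, with $X_+:=\spann\{e^{in\theta}:n\in N_+\}\subset h^{4+\delta}(\sph)$ finite-dimensional and $\partial F(0)$-invariant (so $\mathbf 1\in X_-$), the induced projections being bounded on $h^{1+\delta}(\sph)$ and on $h^{4+\delta}(\sph)$. On $X_+$ the restriction of $\partial F(0)$ has spectrum $\{q_n:n\in N_+\}\subset\{\mathrm{Re}\,z>\beta_0\}$, so the finite-dimensional flow it generates decays like $e^{\beta_0 t}$ as $t\to-\infty$; on $X_-$, by Proposition~\ref{P17} and \eqref{101}, the part of $\partial F(0)$ generates an analytic semigroup of type $\le 0<\beta_0$. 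Together with the smoothing estimates already available this yields an exponential dichotomy for $\partial_t v=\partial F(0)v$ with a gap at $\beta_0>0$.

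\textbf{Unstable manifold and conclusion.} Finally, using $F\in C^\infty(\mathcal W,h^{1+\delta}(\sph))$ with $F(0)=0$ (Proposition~\ref{L15}), $-\partial F(0)\in\mathcal H\big(h^{4+\delta}(\sph),h^{1+\delta}(\sph)\big)$ (Proposition~\ref{P17}), and the dichotomy above, I would run a Lyapunov--Perron fixed point in a weighted space $C_{\beta_0}\big((-\infty,0],h^{4+\delta}(\sph)\big)$: splitting the variation-of-constants formula by the dichotomy projections, the value of the $X_+$-component at $t=0$ parametrizes a local unstable manifold $\mathcal M_\mathrm{u}\subset\mathcal O$ through $0$, tangent to $X_+$, of dimension $\ge 2$, such that each $\rho_0\in\mathcal M_\mathrm{u}$ is the value at time $0$ of a solution $\rho\in C\big((-\infty,0],\mathcal O\big)\cap C^1\big((-\infty,0],h^{1+\delta}(\sph)\big)$ of $\partial_t\rho=F(\rho)$ with $\|\rho(t)\|_{h^{4+\delta}(\sph)}\le c\,e^{\beta_0 t}\to0$ as $t\to-\infty$. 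Since $\mathcal M_\mathrm{u}\neq\{0\}$ this gives nontrivial backward solutions decaying exponentially to $0$; as such a solution cannot stay in a fixed small ball for all forward times (the dichotomy estimates would otherwise force $\rho\equiv0$), the equilibrium $\rho=0$ is unstable. The main obstacle is this last step: carrying the Lyapunov--Perron contraction through in the little-H\"older/maximal-regularity framework for a \emph{fully nonlinear} equation, i.e.\ controlling the variation-of-constants formula and the attendant smoothing via the resolvent bounds of Proposition~\ref{P17}; the spectral computations are routine once Proposition~\ref{L15} is in hand.
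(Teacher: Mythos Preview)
Your proposal is correct and follows essentially the same route as the paper: identify the spectrum of $\partial F(0)$ via Proposition~\ref{L15}, observe that $\mathrm{Re}\,q_1>0$ under $\varrho_\mathrm{i}>\varrho_\mathrm{o}$ and that only finitely many $q_n$ lie in the right half-plane so there is a genuine spectral gap, and then construct the local unstable manifold. The only difference is packaging: the ``main obstacle'' you flag---running the Lyapunov--Perron contraction for a fully nonlinear equation in the little-H\"older maximal-regularity setting---is exactly the content of \cite[Thm.~9.1.3]{Lunardi}, which the paper simply invokes (using that $\partial F(0)$ has compact resolvent, hence pure point spectrum $\{q_n\}$, together with Proposition~\ref{P17}), rather than redoing by hand.
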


\begin{proof}
The compact embedding $h^{4+\delta}(\sph)\hookrightarrow h^{1+\delta}(\sph)$ and 
Proposition~\ref{P17} imply that $\partial F(0)$ has compact resolvent. So the 
spectrum of $\partial F(0)$ consists of eigenvalues only, which, according to 
Proposition~\ref{L15}, are given by {$\big\{q_n\,;\,n\in\Z\setminus
{\{0\}}\big\}$ {and} $q_0:=0$}. {Since 
$\varrho_\mathrm{i}>\varrho_\mathrm{o}$, \eqref{44} implies that} 
$\mu(1)>0$, hence \eqref{45} shows {that} \mbox{$\mathrm{Re}\, q_1 >0$}. 
Clearly,
$$
\inf\left\{\mathrm{Re}\, q_n\,;\, \mathrm{Re}\, q_n>0\right\} \,>\, 0\ .
$$
The assertion now follows from \cite[Thm.9.1.3]{Lunardi}.
\end{proof}

To prove stability of the trivial solution if $\varrho_\oo>\varrho_\ii$, we need an auxiliary result.

\begin{lem}\label{L22}
{Let $\mathcal{W}$ be given as in Lemma~\ref{L14}. Then
$$
\int_{\sph} (1+\rho) F(\rho)\,\rd \sigma=0\, ,\quad \rho\in\mathcal{W}\, .
$$}
\end{lem}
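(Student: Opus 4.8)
\emph{Proof plan.} The key observation is that, for fixed $\rho\in\mathcal{W}$, the function $F(\rho)$ is exactly the transformed normal velocity carried by the outer pressure field belonging to the geometry $\rho$; hence the claim reduces to the same divergence‑theorem computation that produced \eqref{just}, now performed for a frozen $\rho$ instead of along a solution.

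First I would fix $\rho\in\mathcal{W}$ and set $h:=F(\rho)\in h^{1+\delta}(\sph)$, $Q_\ii:=S(\rho,h)\in h^{2+\delta}(B^\ii)$, and $Q_\oo:=T\big(\rho,Q_\ii-\mathcal{K}(\rho)\big)\in h^{2+\delta}(B^\oo)$, which are well defined by Lemma~\ref{L10}, Proposition~\ref{P12}, and Proposition~\ref{P13}. By the definition of $F$ in \eqref{41} one has $\big(1-\mathcal{R}(\rho)\big)h=\mathcal{B}_\oo(\rho)T\big(\rho,-\mathcal{K}(\rho)\big)$, cf.\ \eqref{P3a}; inserting the definition of $\mathcal{R}(\rho)$ and using the linearity of $S(\rho,\cdot)$ and $T(\rho,\cdot)$ this rearranges to $h=\mathcal{B}_\oo(\rho)T\big(\rho,S(\rho,h)-\mathcal{K}(\rho)\big)=\mathcal{B}_\oo(\rho)Q_\oo$ on $\sph$, i.e.\ $F(\rho)=\mathcal{B}_\oo(\rho)Q_\oo$.

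Next I would push everything forward to $\Om_\rho^\oo$ and parametrize $\Gamma_\rho$. Put $P_\oo:=\phi_*^\rho Q_\oo\in h^{2+\delta}(\Om_\rho^\oo)$; then $\Delta P_\oo=0$ in $\Om_\rho^\oo$ and $\alpha_\oo\partial_\nu P_\oo+\beta_\oo\partial_\tau P_\oo=0$ on $R\sph$ by Proposition~\ref{P13} (see \eqref{P2c}). Unravelling the definition of $\mathcal{B}_\oo(\rho)$ and using $\nabla N_\rho=|\nabla N_\rho|\,\nu_\rho$, $\tau_\rho=-(\vec z\times\nu_\rho)$, and the elementary identity $(\vec z\times a)\cdot b=-(\vec z\times b)\cdot a$, one obtains on $\Gamma_\rho$ the relation
\[
\phi_*^\rho\big(\mathcal{B}_\oo(\rho)Q_\oo\big)=\frac{-1}{|\Theta_\oo|^2}\,|\nabla N_\rho|\,\big(\alpha_\oo\partial_{\nu_\rho}P_\oo+\beta_\oo\partial_{\tau_\rho}P_\oo\big)\ .
\]
Parametrizing $\Gamma_\rho$ by $\theta\mapsto\big(1+\rho(\theta)\big)e^{i\theta}$, whose arc-length element equals $\sqrt{\dot\rho^2+(1+\rho)^2}\,\rd\theta$, and recalling from \eqref{n} that $\phi_\rho^*|\nabla N_\rho|=\sqrt{\dot\rho^2+(1+\rho)^2}\big/(1+\rho)$, the weight $1+\rho$ in the statement is precisely what cancels that denominator, whence
\[
\int_{\sph}(1+\rho)\,F(\rho)\,\rd\sigma=\frac{-1}{|\Theta_\oo|^2}\int_{\Gamma_\rho}\big(\alpha_\oo\partial_{\nu_\rho}P_\oo+\beta_\oo\partial_{\tau_\rho}P_\oo\big)\,\rd\sigma\ .
\]
Finally, exactly as in the derivation of \eqref{just}, the vector field $\alpha_\oo\nabla P_\oo+\beta_\oo(\vec z\times\nabla P_\oo)$ is divergence free on $\Om_\rho^\oo$ since $\Delta P_\oo=0$ and $\mathrm{div}(\vec z\times\nabla P_\oo)=0$; Gauss' Theorem then converts the last integral into $-|\Theta_\oo|^{-2}\int_{R\sph}\big(\alpha_\oo\partial_\nu P_\oo+\beta_\oo\partial_\tau P_\oo\big)\,\rd\sigma$, which vanishes by the boundary condition on $R\sph$.

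The only delicate point is the bookkeeping in the passage from the integral over $\sph$ to the flux integral over $\Gamma_\rho$: one has to keep track of the Jacobian factor $|\nabla N_\rho|$ together with the arc-length element and check that their quotient is exactly $1+\rho$; everything else is a verbatim repetition of the computation preceding \eqref{just}. In this sense Lemma~\ref{L22} is the ``$t$-independent'' version of the identity \eqref{just}, obtained by replacing the solution's time derivative $\partial_t\rho$ by the map $F(\rho)$.
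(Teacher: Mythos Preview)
Your argument is correct and is essentially the paper's own proof: both reduce the claim to the divergence-theorem computation behind \eqref{just}, using that $F(\rho)$ is $\mathcal{B}_\oo(\rho)$ applied to a $T(\rho,\cdot)$-solution and that the factor $(1+\rho)$ converts the $\sph$-integral into the flux over $\Gamma_\rho$ via \eqref{n}. The only organizational difference is that the paper phrases this by showing $1-\mathcal{R}(\rho)$ restricts to an isomorphism of the weighted mean-zero subspace $h_{0,\rho}^{1+\delta}(\sph)$ and that $\mathcal{B}_\oo(\rho)T\big(\rho,-\mathcal{K}(\rho)\big)$ lies there, whereas you collapse $\mathcal{R}(\rho)F(\rho)$ and $\mathcal{B}_\oo(\rho)T\big(\rho,-\mathcal{K}(\rho)\big)$ into a single $\mathcal{B}_\oo(\rho)Q_\oo$ and run the divergence theorem once.
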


\begin{proof}
Fix  $\rho\in\mathcal{W}$ and set
$$
h_{0,\rho}^{1+\delta}(\sph):=\left\{f\in h^{1+\delta}(\sph)\,;\,  \int_{\sph} (1+\rho) f\,\rd \sigma=0\right\}\ .
$$
We claim that $1-\mathcal{R}(\rho)$ is an isomorphism on $h_{0,\rho}^{1+\delta}(\sph)$. To see this, set $T_f:=T\big(\rho,S(\rho,f)\big)$ for $f\in h^{1+\delta}(\sph)$. Then, as in \eqref{just},
$$
\int_{\sph} (1+\rho) \mathcal{R}(\rho) f\,\rd \sigma =\int_{\sph} (1+\rho) \mathcal{B}_\oo(\rho) T_f\,\rd \sigma= 0
$$
and Lemma~\ref{L14} implies that $1-\mathcal{R}(\rho)$ is indeed an isomorphism on $h_{0,\rho}^{1+\delta}(\sph)$. But, as above, $$\mathcal{B}_\oo(\rho)T\big(\rho,-\mathcal{K}(\rho)\big)\in h_{0,\rho}^{1+\delta}(\sph)\ .$$ Therefore, 
$$
F(\rho)=\big(1-\mathcal{R}(\rho)\big)^{-1} \mathcal{B}_\oo (\rho) T\big(\rho\,,\,-\mathcal{K}(\rho)\big)\in  h_{0,\rho}^{1+\delta}(\sph)\ .
$$
\end{proof}

{We conclude the proof of Theorem~\ref{T2} by stating the stability result 
for which we need to define
$$
 h_{0}^{s}(\sph):=\big\{f\in h^{s}(\sph)\,;\,  \int_{\sph} f\,\rd \sigma=0\big\}\, ,
$$
for $s>0$.}
\begin{thm}\label{T23}
{If $\varrho_\mathrm{o}>\varrho_\mathrm{i}$,} the trivial solution $\rho=0$ of 
$$
\partial_t\rho=F(\rho)\, ,\quad t>0\, ,
$$
is stable. More precisely, there are numbers $\omega, r, M>0$ such that for each initial datum \mbox{$\rho_0\in  h_{0}^{1+\delta}(\sph)$} with $\|\rho_0\|_{h^{1+\delta}(\sph)}\le r$  there is a unique global solution
$$
\rho\in C\big(\R^+,h_{0}^{4+\delta}(\sph)\big)\cap C^1\big(\R^+,h_{0}^{1+\delta}(\sph)\big)
$$
{with $\rho(0)=\rho_0$} and
$$
\left\|\rho(t)\left(1+\frac{\rho(t)}{2}\right)\right\|_{h^{4+\delta}(\sph)}+ \left\|\dot{\rho}(t)\left(1+\rho(t)\right)\right\|_{h^{1+\delta}(\sph)}\ \le\ M\,e^{-\omega t}\, \left\|\rho_0\left(1+\frac{\rho_0}{2}\right)\right\|_{h^{4+\delta}(\sph)}\, ,\quad t\ge 0\ .
$$
\end{thm}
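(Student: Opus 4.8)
The plan is to eliminate the neutral mode $q_0=0$ of $\partial F(0)$ --- which reflects the one-parameter family of circular equilibria --- by exploiting the conservation law of Lemma~\ref{L22}, and then to apply the principle of linearized stability to the resulting reduced equation, whose linearization at the origin generates an exponentially decaying analytic semigroup precisely when $\varrho_\oo>\varrho_\ii$.

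First I would pass to the new unknown $w:=\rho\bigl(1+\frac{\rho}{2}\bigr)=\frac12\bigl[(1+\rho)^2-1\bigr]$. This substitution is chosen so that the area of $\Om_\rho^\ii$ equals $\pi+\int_\sph w\,\rd\sigma$ and so that $\dot w=(1+\rho)\dot\rho$. For $\|\rho\|_\infty$ small, $\rho\mapsto w$ is a real-analytic diffeomorphism of a zero neighbourhood onto a zero neighbourhood in each relevant little-H\"older space (its inverse being $w\mapsto\sqrt{1+2w}-1$) with derivative the identity at $0$. Writing $\rho=\rho(w)$ for the inverse and $G(w):=(1+\rho(w))\,F(\rho(w))$, the flow $\partial_t\rho=F(\rho)$ turns into $\partial_t w=G(w)$, where $G$ is smooth near $0$, $G(0)=0$ (since $F(0)=0$), and $\partial G(0)=\partial F(0)$. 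The decisive point is Lemma~\ref{L22}: it gives $\int_\sph G(w)\,\rd\sigma=\int_\sph(1+\rho(w))F(\rho(w))\,\rd\sigma=0$ for all small $w$, so $G$ maps into the closed subspace $h_0^{1+\delta}(\sph)$ of zero-mean functions, and the flow of $G$ leaves $h_0^{1+\delta}(\sph)$ invariant. Hence $\partial_t w=G(w)$ may be viewed as a fully nonlinear parabolic problem on $h_0^{1+\delta}(\sph)$ with regularity space $h_0^{4+\delta}(\sph)$, initial datum $w_0:=\rho_0(1+\frac{\rho_0}{2})$, and with the neutral direction removed.

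Next I would analyse $\partial G(0)=\partial F(0)|_{h_0^{4+\delta}}$. Since $\partial F(0)$ is the Fourier multiplier $h\mapsto\sum_{n\ne0}\hat h_n q_n e^{in\theta}$ of Proposition~\ref{L15}, it commutes with the projection onto zero-mean functions; thus $h_0^{1+\delta}(\sph)$ is a closed invariant subspace and, by Proposition~\ref{P17}, $\partial F(0)|_{h_0}$ generates an analytic semigroup on $h_0^{1+\delta}(\sph)$ with spectrum $\{q_n\,;\,n\in\Z\setminus\{0\}\}$. If $\varrho_\oo>\varrho_\ii$, i.e.\ $\gamma_\oo>\gamma_\ii$, then $\mu(n)=|n|\bigl(\sigma-2(\gamma_\oo-\gamma_\ii)-\sigma n^2\bigr)$ is strictly negative for every $n\ne0$ (it equals $-2(\gamma_\oo-\gamma_\ii)<0$ when $|n|=1$ and only decreases as $|n|$ grows), while $A_n>0$; by \eqref{44} one therefore has $\mathrm{Re}\,q_n=\frac{A_n}{A_n^2+B^2}\mu(n)<0$ for all $n\ne0$, and $\mathrm{Re}\,q_n\to-\infty$ by the asymptotics in the proof of Proposition~\ref{P17}. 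Consequently $s_0:=\sup_{n\ne0}\mathrm{Re}\,q_n$ is finite and strictly negative, so $\partial G(0)$ generates an exponentially decaying analytic semigroup on $h_0^{1+\delta}(\sph)$. Fixing any $\omega\in(0,-s_0)$ and running the argument of Theorem~\ref{ex} on the scale $h_0^{1+\xi}(\sph)\hookrightarrow h_0^{1+\delta}(\sph)\hookrightarrow h_0^{4+\delta}(\sph)$, but now invoking the principle of linearized stability (e.g.\ \cite[Thm.9.1.2]{Lunardi}) in place of bare local existence, one obtains $r,M>0$ such that for every $w_0\in h_0^{1+\delta}(\sph)$ with $\|w_0\|_{h^{1+\delta}(\sph)}\le r$ there is a unique global solution $w\in C(\R^+,h_0^{4+\delta}(\sph))\cap C^1(\R^+,h_0^{1+\delta}(\sph))$ of $\partial_t w=G(w)$, $w(0)=w_0$, with $\|w(t)\|_{h^{4+\delta}(\sph)}+\|\dot w(t)\|_{h^{1+\delta}(\sph)}\le M e^{-\omega t}\|w_0\|_{h^{4+\delta}(\sph)}$. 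Transporting back by $\rho:=\sqrt{1+2w}-1$ --- so that $w=\rho(1+\frac{\rho}{2})$, $\dot w=\dot\rho(1+\rho)$, $w_0=\rho_0(1+\frac{\rho_0}{2})$ --- yields, after shrinking $r$ so that $\rho$ stays in $\mathcal{V}$, the asserted global solution $\rho$ of $\partial_t\rho=F(\rho)$ with $\rho(0)=\rho_0$ and, together with $S$ and $T$ as in Section~\ref{existence}, the corresponding pressures; the displayed estimate is precisely the one for $w$.

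I expect the main obstacle to be the reduction in the second paragraph: one must recognise that Lemma~\ref{L22} is conservation of the inner fluid's area, and then use the quadratic substitution $w=\rho(1+\frac{\rho}{2})$ to linearise this conservation law into the condition $\int_\sph w=0$, thereby removing the zero eigenvalue of $\partial F(0)$ from the reduced problem. Once this is in place, the hypothesis $\varrho_\oo>\varrho_\ii$ makes the spectral estimate immediate via \eqref{44}, and the conclusion follows from the same linearized-stability machinery already used in Section~\ref{existence}. The remaining ingredients --- smoothness of $G$, the fact that the continuous-interpolation scale of Theorem~\ref{ex} restricts to the zero-mean subspaces, and the transfer of analytic-semigroup generation and of the multiplier estimates of Proposition~\ref{AB} to the invariant subspace $h_0^{1+\delta}(\sph)$ --- are routine.
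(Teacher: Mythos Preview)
Your proposal is correct and follows essentially the same approach as the paper: the paper makes the identical quadratic substitution $\zeta=\rho+\rho^2/2$, uses Lemma~\ref{L22} to restrict $G$ to the zero-mean little-H\"older spaces, observes $\partial G(0)=\partial F(0)$ with spectrum $\{q_n\,;\,n\ne0\}$, checks via \eqref{44} and \eqref{45} that $\mathrm{Re}\,q_n\le -2(\gamma_\oo-\gamma_\ii)\cdot A_n/(A_n^2+B^2)<-\omega<0$ when $\varrho_\oo>\varrho_\ii$, and then invokes \cite[Thm.~9.1.2]{Lunardi}. Your write-up is in fact more careful about explaining why the substitution is natural (area conservation) and why $\partial F(0)$ restricts well to the zero-mean subspace; the only slip is the direction of the embeddings in your interpolation scale, which should read $h_0^{4+\delta}\hookrightarrow h_0^{1+\delta}\hookrightarrow h_0^{1+\xi}$.
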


\begin{proof}
Letting $\zeta:=\rho+\rho^2/2$ for $\rho\in \mathcal{W}$, {problem}
$\partial_t\rho=F(\rho)$, $ t>0$,
is equivalent to
\bqn\label{GG}
\partial_t\zeta=G(\zeta)\, ,\: t>0\, ,
\eqn
where $G\in C^2\big(Z,h^{1+\delta}(\sph)\big)$ with $Z:=\big\{{\zeta=}\rho+\rho^2/2\,;\, \rho\in\mathcal{W}\big\}$ is given by 
$$
G(\zeta):=\big(2\sqrt{1+\zeta}-1\big) F\big(2\sqrt{1+\zeta}-1\big)\ .$$
Moreover, Lemma~\ref{L22} implies
$G\in C^2\big(Z_0,h_0^{1+\delta}(\sph)\big)$ for $Z_0:=Z\cap h_0^{4+\delta}(\sph)$.
Also, $$\partial G(0)=\partial F(0)\in\mathcal{L}\big(h_0^{4+\delta}(\sph),h_0^{1+\delta}(\sph)\big)\ .$$ Thus $\partial G(0)$ has compact resolvent and its (point) 
spectrum equals ${\big\{}q_n\,;\, n\in\Z\setminus\{0\}{\big\}}$. 
Now, by \eqref{44},
$$
\mathrm{Re}\, q_n=\frac{A_n}{A_n^2+B^2}\,\mu(n)\, ,\quad n\in\Z\setminus\{0\}\, ,
$$
{while $\varrho_\mathrm{o}>\varrho_\mathrm{i}$ implies that}
$$
\mu(n)=\vert n\vert \big(\sigma-2(\gamma_\mathrm{o}-\gamma_\mathrm{i})-\sigma n^2\big)\le -2(\gamma_\oo-\gamma_\ii)<0\, ,\quad n\in\Z\setminus\{0\}\ .
$$
{This, combined with \eqref{45},} shows that the spectrum of $\partial G(0)$ 
is contained in a half plane \mbox{$[\mathrm{Re}\, \lambda\le -\omega]$} for some 
$\omega>0$. The assertion now follows from \cite[Thm.9.1.2]{Lunardi} applied to 
\eqref{GG}.
\end{proof}

{Note that our analysis {yields an explicit estimate of} the exponential 
decay {rate} $\omega$ in terms of the physical parameters through \eqref{44}.}


\end{document}